\newtheorem{lemma}{Lemma}
\newtheorem{corollary}{Corollary}
\newtheorem{definition}{Definition}
\newtheorem{theorem}{Theorem}
\newtheorem{problem}{Problem}
\newtheorem{proposition}{Proposition}
\newtheorem{remark}{Remark}
\newcommand{\DeltaG}{\Delta_\Gamma}
\newcommand{\nablaG}{\nabla_\Gamma}
\newcommand{\mat}{\dot{\partial}}
\newcommand{\A}{\mathcal{H}}
\newcommand{\Je}{\mathcal{J}_\varepsilon}
\newcommand{\unit}{\texttt{1}\!\!\texttt{l}}
\def \D #1{\underline{D}_{#1}}
\numberwithin{equation}{section}
\numberwithin{lemma}{section}
\numberwithin{corollary}{section}
\numberwithin{theorem}{section}
\numberwithin{proposition}{section}
\numberwithin{problem}{section}
\numberwithin{remark}{section}
\numberwithin{definition}{section}
\title{Small deformations of Helfrich energy minimising surfaces with applications to biomembranes}
\author{Charles M. Elliott, Hans Fritz, Graham Hobbs}
\date{}
\begin{document}
\setlength\parindent{0pt}
\maketitle

\abstract{In this paper we introduce a mathematical model for small deformations  
induced by  external forces  of closed surfaces 
that are minimisers of Helfrich-type energies. Our model is suitable for the
study of deformations of cell membranes induced by the cytoskeleton. 
We describe the deformation of the surface as a graph over the undeformed surface.
A new Lagrangian and the associated Euler-Lagrange equations 
for the height function of the graph are derived. This is the natural generalisation of the well known linearisation in the 
Monge gauge for initially flat surfaces. We discuss energy perturbations of  point constraints and point forces 
acting on the surface. We establish existence and uniqueness results for weak solutions on spheres and on tori.
Algorithms for the computation of numerical solutions in the general setting are provided. 
We present numerical examples which highlight the behaviour of the surface deformations in different settings
at the end of the paper.}

\textbf{Key words.} 
Surface deformations, Helfrich energy, point forces, PDEs on surfaces, 
existence and uniqueness of weak solutions, discretization, surface finite element method. 
\\

\textbf{AMS subject classifications 2010.} 74L15, 49Q10, 58J90 , 65N30 
\\


\section{Introduction}
\subsubsection*{Motivation}
In this paper, we study small deformations of closed surfaces described by Helfrich-type energies. The surface deformations are assumed to be induced by external forces
that lead to small displacements.
The deformed surface will be described as a graph over the surface that minimizes the associated energy without any external forces.
We will simplify the problem to determine the shape of the deformed surface by introducing a quadratic approximation of the original energy.
This will lead to solving a linear elliptic PDE of fourth order for the height function instead of solving a non-linear PDE. 
We will then consider point forces and point constraints acting on the surface. 
In particular, we will show the existence and uniqueness of solutions to the associated Euler-Lagrange equations in each case.
The main purpose of the paper is to establish a rigorously derived, simplified model
for the description of deformations of surfaces, which can be applied in different settings. In particular we obtain well posed PDEs on  known simple surfaces viz. the sphere and Clifford torus. This will partially extend the results of \cite{EllGraHobKorWol15}, which studies similar deformations in the Monge gauge of an initially flat surface, to some specific closed surfaces. Furthermore the approach we take in formulating the simplified model could be generalised to other energy functionals and appropriate surfaces.   

Before we introduce the energy functional,
which we aim to study in this context, we will first give a short explanation of why our approach might be interesting in the modelling of cell morphology.  
Biomembranes are thin bilayers which surround cells and some organelles creating a barrier between their interior and exterior. 
They are principally composed of a lipid bilayer and a  large number of protein molecules.
In Biophysics, the mechanical properties of lipid bilayers are usually described in terms of the Helfrich energy (see below). 
Membrane proteins have a great variety of forms and purposes. They may be attached to a surface of the bilayer, embedded into it or span the entire bilayer. 
The proteins interact with the membrane and cause local deformations. 
Due to the great variety of proteins and their possible interactions with the membrane there are many models for membrane deformation; 
see, for example, in \cite{McMGal05} for a discussion of the main mechanisms of membrane deformation. 
A large area of study is the effect of embedded inclusions which deform the membrane by their shape, see \cite{DomFouGal98,KozSch15}.
The cross section of membrane proteins and the membrane surface usually differ dramatically in scale. 
In this paper, we will therefore totally neglect the finite size of membrane proteins, although our model could, in principle, be extended into the direction of finite size particles. 
This means that we will here only consider effects that can be modelled by point forces and point constraints; for more general force distributions see \cite{EvaTurSen03}.     

Another important mechanism for membrane deformations involves the cytoskeleton of the cell.
The cytoskeleton is a dynamic intracellular structure which defines the cell's shape.
Actin filaments \cite{GovGop06,VekGov07} are one part of the cytoskeleton.
Their polymerisation can induce forces on the cell membrane leading to filopodia formation \cite{AtiWirSun06,IsaManKacYocGov13, OrlNaoGov14}. 
In fact, the actin polymerisation is a dynamic process involving the movement of activators through the membrane; see \cite{GovGop06,VekGov07}. 
In this paper, we consider the specific case when point-like forces are applied to a surface. 
We think that this is indeed a good model for the interaction of the cytoskeleton with the membrane.
We will also introduce point constraints for the surface position to model filaments which are bound to the membrane.
An interesting question is to study the membrane mediated interactions between filaments
and to determine the optimal placement of the forces with respect to an appropriate membrane energy. 
Such interactions were not included in the model used in \cite{GovGop06}.  
Their study can be regarded as a first application of the presented approach.
If one is interested in long range interactions, the Monge gauge setting (see below) is not suitable any more.
Instead, a global analysis on closed surfaces becomes necessary.
Our approach is therefore of particular interest for the study of global membrane behaviour.

\subsubsection*{Mathematical model}           
To model the mechanical properties of the membrane its thickness is negated and it is regarded as a single elastic sheet which occupies some two dimensional hypersurface 
$\Gamma \subset \mathbb{R}^3$. We measure its elastic energy by the Helfrich energy \cite{Hel73}, accounting for possible surface tension, that is
\begin{equation}
E_{\textnormal{Helfrich}}(\Gamma) := \int_\Gamma \frac{1}{2}\kappa (H-c_0)^2 +\kappa_G K+\sigma \;do.
 \label{H_energy}
\end{equation}
Here, $H$ denotes the mean curvature and $K$ the Gaussian curvature. The constants $\kappa,\kappa_G >0$ and $\sigma \geq 0$ measure the bending rigidity and the surface tension of the membrane,
respectively. The remaining parameter $c_0$ is the spontaneous curvature, that is the preferred curvature of the membrane. 
This may be due to asymmetry in the configuration of the lipid bilayer. We will here assume that $c_0=0$,
although a generalization to non-vanishing spontaneous curvatures is straightforward. This produces a functional closely related to the Willmore energy \cite{Wil65}. 
Since we do not consider any topological changes of the membrane, the Gaussian curvature term,
which is associated to the Euler characteristic, is a constant, and can therefore be neglected.
That is we assume that the undeformed membrane is described by a surface that minimizes the energy functional 
\begin{equation}
\mathcal{W}(\Gamma) := \int_{\Gamma} \frac{1}{2}\kappa H^2 + \sigma \;do
\label{W_energy}
\end{equation}
subject to some constraints introduced below.  

\subsubsection*{Related work}
The Helfrich energy functional is often studied in the Monge gauge, when $\Gamma$ may be para\-metrised as a graph
\begin{equation*} \label{mongeGauge}
\Gamma :=\left\{ (x_1,x_2,u(x_1,x_2)) \; \big| \; (x_1,x_2) \in \Omega \right\}.
\end{equation*}         
Here, $\Omega \subset \mathbb{R}^2$ is some flat reference domain and $u$ is the displacement function.
Within this approach $u$ is usually supposed to be suitably small. 
The Monge gauge was recently studied in\cite{EllGraHobKorWol15}. The authors introduced a general mathematical framework for deformations of flat surfaces. 
In particular, the behaviour of point forces and point constraints was completely characterised. 
Moreover, the authors proved well-posedness for models related to embedded inclusions and examined their interactions. 
These effects were illustrated in numerical results obtained from finite element methods introduced to solve the related PDEs.      

In this paper, we will study small displacements from closed surfaces. The main application will be spheres and tori, although the results are more general. 
We will derive an energy functional which is a second order approximation of the Helfrich energy \eqref{H_energy}. 
In the case of a sphere such an approximation has been computed directly in \cite{Hel86,Saf83} in order to study shape fluctuations of approximately spherical vesicles 
and microemulsion droplets, respectively. Certainly, the advantage of our derivation is that it can also be applied to study deformations from other surfaces, 
for example, from tori. Once we have derived an appropriate energy functional, we may use the same theory developed in \cite{EllGraHobKorWol15}, which was done in a general Hilbert space setting. 

The effects of point forces on the membrane have been studied in Biophysics literature \cite{GovGop06,VekGov07}. 
These papers study a dynamical problem related to the diffusion of activator proteins which induce actin polymerisation. 
This work is done in the Monge gauge and also accounts for spontaneous curvature of the embedded activator proteins. 
In \cite{GovGop06} it is observed that the membrane motion can be wave like or lead to filipodia formation, depending on the sign of the spontaneous curvature. 
In \cite{VekGov07} it is observed that clusters of curvature-inducing activator proteins undergo a phase separation (aggregation to one region). 
This causes the formation of filipodia (membrane protrusions) as the activator proteins induce actin polymerisation. 
Our work will differ from this in a number of key respects. The primary difference is we do not work in the Monge gauge but instead on closed surfaces. 
Secondly, we will consider individual point couplings with the membrane, rather than accounting for a density of couplings.
We do this as we aim to describe the membrane mediated interactions between the couplings rather than a more macro level study of the entire membrane. 
One of our primary objectives is to establish a mathematical framework in which we can apply abstract results for well-posedness 
and produce numerical computations based on surface finite elements.   

\subsubsection*{Outline of the Paper}   
We begin by introducing the appropriate notation for the formulation of surface PDEs in Section \ref{Section_notation}. 
We recapitulate surface calculus, the definition of tangential derivatives and of the Laplace-Beltrami operator. 
This allows us to define the Sobolev spaces on which we will pose our minimisation problems. 
In Section \ref{sec:zeroTension} we introduce an energy which
is based on the Helfrich functional and the work done by forces applied to the surface in the zero surface tension case. We also discuss certain deformations that we will not permit.
Apart from physical reasons this will ensure that the mathematical problems in Sections \ref{Section_point_forces} and \ref{Section_numerics} are well-posed. In Section \ref{sec:positiveTension} our approach is extended in order to incorporate positive surface tension on a spherical surface.
In Section \ref{Section_point_forces} we will formulate energy minimisation problems associated with the energy
in such a way that we may apply general Hilbert space theory for their well-posedness. 
In particular, we will focus on point forces and point displacements.
We conclude by discussing the surface finite element methods used to numerically solve the related surface PDEs in Section \ref{Section_numerics}. 
We present numerical simulations detailing the membrane mediated interactions between point forces and examining the effect of point constraints.


\section{Notation and preliminaries} 
\label{Section_notation}
In the following we consider an embedding $x: \mathcal{M} \rightarrow \mathbb{R}^{3}$
of a two-dimensional connected, closed (that is compact and without boundary), orientable manifold
(that is a topological space which is locally homeomorphic to open
subsets $\Omega_i$ of $\mathbb{R}^2$ via the so-called coordinate charts 
$\mathcal{C}_i : U_i \subset \mathcal{M} \rightarrow \Omega_i \subset \mathbb{R}^2$). 
In the following we assume that $\mathcal{M}$ and $x$ are as regular as needed, but at most of class $C^4$.
The image $\Gamma := x(\mathcal{M})$ of $\mathcal{M}$ is a two-dimensional connected, closed, 
orientable hypersurface embedded into $\mathbb{R}^{3}$.

Henceforward, the Euclidean scalar product is denoted by $v \cdot w := v_\alpha w_\alpha$,
where we have made use of the convention to sum over repeated indices. For matrices 
$A,B \in \mathbb{R}^{3 \times 3}$ we define the scalar product 
$A:B := A_{\alpha \beta} B_{\alpha\beta}$. 

According to the Jordan-Brouwer separation theorem, see \cite{Lima}, there exists a bounded domain $D$ which
has $\Gamma$ as its point set boundary. The unit normal $\nu$ to $\Gamma$  
that points away from this domain is called the outward unit normal.
We define $P:=  \unit - \nu \otimes \nu$ on $\Gamma$ 
to be, at each point of $\Gamma$, the projection onto the corresponding tangent space.
Here $\unit$ denotes the identity matrix in $\mathbb{R}^{3}$. 
For a differentiable function $f$ on $\Gamma$ we define the tangential gradient by
\begin{equation*}
	\nabla_{\Gamma} f := P \nabla \overline{f},
\end{equation*}
where $\overline{f}$ is a differentiable extension of $f$ to an open neighbourhood of 
$\Gamma \subset \mathbb{R}^{3}$. Here, $\nabla$ denotes the usual gradient in $\mathbb{R}^{3}$.
The above definition only depends on the values of $f$ on $\Gamma$.
In particular, it does not dependent on the extension $\overline{f}$, 
see Lemma 2.4 in \cite{DziEll13} for more details. 
The components of the tangential gradient are denoted by $(\D 1 f, \D 2 f, \D {3} f)^T := \nabla_\Gamma f$.
For a twice differentiable function the Laplace-Beltrami operator is defined by
$$
	\Delta_\Gamma f := \nabla_\Gamma \cdot \nabla_\Gamma f.
$$

The extended Weingarten map $\mathcal{H} := \nabla_{\Gamma} \nu$ is symmetric and has zero eigenvalue
in the normal direction. The eigenvalues $\kappa_i$, $i=1, 2$, belonging to the tangential eigenvectors
are the principal curvatures of $\Gamma$. The mean curvature $H$ is the sum of the principal curvatures,
that is $H := \sum_{i=1}^2 \kappa_i = \mbox{trace}\;( \mathcal{H}) = \nabla_{\Gamma} \cdot \nu$. 
Note that our definition differs from the more common one by a factor of $2$.
We will denote the identity function on $\Gamma$ by $id_\Gamma$, that is $id_\Gamma(p) = p$ for all $p \in \Gamma$.
The mean curvature vector $H \nu$ satisfies $H \nu = - \Delta_\Gamma id_\Gamma$, see Section 2.3 in \cite{DecDziEll05}.
Tangential gradients satisfy the following commutator rule, see Lemma 2.6 in \cite{DziEll13},
\begin{equation}
\underline{D}_\alpha \underline{D}_\beta f - \underline{D}_\beta\underline{D}_\alpha f = \left(\mathcal{H} \nablaG f \right)_\beta\nu_\alpha - \left(\mathcal{H} \nablaG f \right)_\alpha\nu_\beta . 
\label{commutator_rule}
\end{equation}
The Sobolev spaces $H^1(\Gamma)$ and $H^2(\Gamma)$ on the hypersurface $\Gamma$ are defined by
\begin{align*}
	& H^1(\Gamma) := \left\{ f \in L^2(\Gamma) \;|\; 
	\textnormal{$f$ has weak derivatives $\D \alpha f \in L^2(\Gamma)$, $\alpha = 1, 2, 3$}\right\}\\
	& H^2(\Gamma) := \left\{ f \in H^1(\Gamma) \;|\;
	\textnormal{all weak derivatives $\D \beta \D \alpha f \in L^2(\Gamma)$,
	$\alpha, \beta = 1, 2, 3$ exist}\right\},
\end{align*}
where $\D \alpha f := v_\alpha \in L^2(\Gamma)$ is said to be the weak derivative of $f$ if
$$
	\int_{\Gamma} f \D \alpha  \phi \;do = - \int_\Gamma v_\alpha \phi \;do 
	+ \int_{\Gamma} f \phi H \nu_\alpha \;do 
$$
for all smooth test functions $\phi$ on $\Gamma$, see also Definition 2.11 in \cite{DziEll13}.
Here, the integrals are taken with respect to the two-dimensional Hausdorff measure on $\Gamma$. The Sobolev space $H^1(\Gamma)$ is a Hilbert space when endowed with the standard $H^1$ inner product and induced norm,
\[
( u,v )_{H^1(\Gamma)} := \int_\Gamma \nablaG u \cdot \nablaG v + uv \;do \quad \text{and} \quad \|u\|_{H^1(\Gamma)}:= \sqrt{( u,u )_{H^1(\Gamma)}}. 
\]
Similarly, $H^2(\Gamma)$ is a Hilbert space when endowed with the following inner product and induced norm
\[
( u,v )_{H^2(\Gamma)} := \int_\Gamma \DeltaG u \DeltaG v  + \nablaG u \cdot \nablaG v + uv \;do \quad \text{and} \quad \|u\|_{H^2(\Gamma)}:= \sqrt{( u,u )_{H^2(\Gamma)}}. 
\] 
Observe we are not using the standard inner product and induced norm on $H^2(\Gamma)$ which contains mixed second order derivatives. On a closed surface however the norm we defined above is equivalent to the standard $H^2(\Gamma)$ norm, see \cite{DziEll13} for details.
 
We next assume that $\Gamma_s := x_s(\mathcal{M})$ depends on a parameter $s \in (-\delta, \delta)$,
$\delta > 0$. The material derivative $\dot{f}$ of a function 
$f : \bigcup_{s \in (-\delta, \delta)} \Gamma_s \times \{ s \} \rightarrow \mathbb{R}$ 
is then defined by
\begin{equation}
	\dot{f} := \frac{\partial(f \circ x_s)}{\partial s} \circ x^{-1}_s.
	\label{material_derivative}
\end{equation}
We will also use the notation $\dot{\partial} f$ to denote the material derivative.
The transport formula, see Theorem 5.1 in \cite{DziEll13}, states that
\begin{equation}
	\frac{d}{ds} \int_{\Gamma_s} f \;do_s = \int_{\Gamma_s} \dot{f} + f \; \nabla_{\Gamma_s} \cdot V \;do_s,
	\label{transport_formula}
\end{equation}
where the vector field $V$ on $\Gamma_s$ is given by $V \circ x_s := \frac{\partial}{\partial s} x_s$.
If $X(\theta)$ is a local parametrization of $\Gamma$, see \cite{DziEll13} section 2, the first fundamental form $G(\theta):=(g_{ij}(\theta))_{i,j=1,2}$ has entries
\[
g_{ij}(\theta) := \frac{\partial X}{\partial \theta_i}\cdot \frac{\partial X}{\partial \theta_j}.
\]
The matrix $G$ is invertible and we denote the entries of $G^{-1}$ by $g^{ij}$.


\section{Modelling of small surface deformations without surface tension} 
\label{sec:zeroTension}
\subsection{Deformations due to small external forces}
\label{Section_deformations_due_to_forces}

We begin by considering surfaces without tension, that is we model their energy by the functional $\mathcal{W}$ in \eqref{W_energy} with $\sigma = 0$. We will fix the constant $\kappa=1$ since its only effect is to rescale the energy in this case. The resulting energy functional is the Willmore functional and here we denote it by $W$, that is 
\begin{equation}
W(\Gamma) := \int_\Gamma \frac{1}{2}H^2 \;do.
\end{equation}

In the following we will consider surfaces which are critical points for the energy $W$, we will denote these by $\Gamma_0$. This means that $\Gamma_0$ satisfies 
\begin{equation} \label{eq:WillmoreSurface}
\frac{d}{ds}W(\Gamma_s) \bigg|_{s=0} = 0, \qquad \forall u \in C^2(\Gamma_0).
\end{equation} 
where $\Gamma_s:=\left\{ p + s u(p)\nu_0(p) \;|\; p \in \Gamma_0 \right\}$. We will refer to $\Gamma_0$ as an undeformed surface. 
We now discuss how small deformations that are due to some small external forces can be incorporated into this model by perturbing the energy functional.

The undeformed membrane $\Gamma_0$ is now exposed to some external forces.
Since the exact form of the forces is
negligible in this section, we describe them by some arbitrary (non-linear) energy functional
$\tilde{\mathcal{F}}(\Gamma)$. 
In Section \ref{Section_point_forces} we will discuss point forces in detail. Such forces can indeed be regarded as a model for forces acting on biomembranes in living cells.
We say that a force is small if the associated energy functional is small compared to the change in the bending energy. 
In this case we rescale the functional $\tilde{\mathcal{F}}$ by a small parameter $\varepsilon > 0$,
that is $\mathcal{F} := \tilde{\mathcal{F}} / \varepsilon$, 
such that the rescaled energy $\mathcal{F}$ is of the same
order as the change in the bending energy. For such forces the total energy of the membrane is given by
\begin{equation}
	\Je (\Gamma):=W(\Gamma) - \varepsilon \mathcal{F}(\Gamma)
	\label{energy_with_force_term}
\end{equation}
We are motivated by attempting to minimise this energy. Since the energy associated to the external forces is of order $\varepsilon$ we regard this as a perturbation of the Willmore energy $W$. It is then reasonable to assume that the deformation is also of order $\varepsilon$ and that a deformed surface $\Gamma$ can be described as a graph over $\Gamma_0$, explicitly deformed surfaces are of the form
\begin{equation} \label{graphDeformedSurface}
	\Gamma_\varepsilon := \{ p + \varepsilon (u \nu_0)(p) ~|~ p \in \Gamma_0 \},
\end{equation}
where the height function $u \in C^2(\Gamma_0)$ is defined on the undeformed membrane 
$\Gamma_0$. It describes the deformations of $\Gamma$ in the normal direction 
that are induced by the external forces.
We wish to find the deformed surface $\Gamma_\varepsilon$ for which the energy \eqref{energy_with_force_term} is least.
In the following we aim to find a good approximation 
for the above energy which will simplify energy minimisation to a linear PDE. We first note that the energy $\Je$
can be interpreted as a functional for the height function $u$ which depends on a scale parameter $\varepsilon$. With a slight abuse of notation, we therefore write $\Je(u)$
instead of $\Je(\Gamma_\varepsilon)$ in the following. 
We now treat $\Je(u)$
as a function of a single variable $\varepsilon$ and produce the following second order Taylor expansion 
\begin{equation}\label{genTaylor}
\Je(u) = \mathcal{J}_0(u)  
+ \varepsilon \frac{\mathrm{d}\Je(u)}{\mathrm{d}\varepsilon}\bigg|_{\varepsilon=0}   
+ \frac{\varepsilon^2}{2}  \frac{\mathrm{d}^2\Je (u)}{\mathrm{d}\varepsilon^2}\bigg|_{\varepsilon=0}  
+ O(\varepsilon^3).
\end{equation}
We observe that the first term $\mathcal{J}_0(u)=W(\Gamma_0)$ does not depend on $u$.
Since $\Gamma_0$ is assumed to be a critical point of $W$, the second term reduces to $-\mathcal{F}(\Gamma_0)$ and thus does not depend on $u$. 
The second order term is therefore the lowest order term that depends on $u$.
The Taylor expansion hence can be written as
\[
\Je(u) = W(\Gamma_0) -\varepsilon\mathcal{F}(\Gamma_0)  + \varepsilon^2 J(u) + O(\varepsilon^3).
\] 
with
\begin{equation}
	J(u) := \frac{1}{2}\frac{\mathrm{d}^2\Je(u)}{\mathrm{d}\varepsilon^2}\bigg|_{\varepsilon=0}.
	\label{definition_of_energy_J}
\end{equation}
The approximate energy $J(u)$ is the sum of variations of the functionals $W$ and $\mathcal{F}$. 
To derive an explicit formula for $J(u)$ will be part of the next section. 
Instead of determining minimisers of the original energy in (\ref{energy_with_force_term}), 
we aim to approximate them by considering the novel energy $J$.

\subsection{Derivation of an energy functional for the height function}
\label{Section_derivation_of_approx_functional}
In this section we discuss the first and second variations of $W(\Gamma)$. We first consider variations $\Gamma_\varepsilon := \left\{ p + \varepsilon (u \nu)(p) ~|~ p \in \Gamma \right\}$
on arbitrary surfaces $\Gamma$ before we restrict the results to $\Gamma_0$.
We will then use these results in (\ref{definition_of_energy_J}) to obtain an explicit formula
for $J(u)$. In the next sections we will discuss the application of this result to the deformation of spheres and Clifford tori. 

\subsubsection*{First and second variations} 
In the following we will usually state the results of the first and second variations without using integration by parts 
and we will note it explicitly if a formula is based on integration by parts.  
Whilst this distinction is minor in the present paper, there are several reasons why it is quite useful to separate these results from each other:
\begin{enumerate}
\item[1.] Using integration by parts requires higher regularity of the surface $\Gamma$ or of the embedding $x$, respectively.
\item[2.] The  approach presented here could be extended to surfaces with boundary, in which case boundary conditions have to be taken into account.
To consider surfaces with boundary might indeed be interesting in order to model finite size inclusions in biomembranes.
\item[3.] On piecewise linear interpolations of the surface $\Gamma$, see Section \ref{Section_numerics}, 
integration by parts would lead to additional terms depending on the discontinuous co-normals of the mesh simplices.
This means that the discretization of formulas, which are equivalent in the smooth case, usually leads to different algorithms.
Therefore, one has to be very careful in Numerics, which formula one chooses for the discretization. 
\end{enumerate}
To compute the required derivatives of $W$ we use the following formulae relating them to variations of $W$.
\begin{align*}
& W'(\Gamma)[u\nu] := \frac{\mathrm{d}W(\Gamma_\varepsilon)}{\mathrm{d}\varepsilon}\bigg|_{\varepsilon =0}  \\
& W''(\Gamma)[u\nu,u\nu] := \frac{\mathrm{d^2}W(\Gamma_\varepsilon)}{\mathrm{d}\varepsilon^2}\bigg|_{\varepsilon =0}
\end{align*}
\begin{remark}
In the following we assume sufficient smoothness of $u$ and $\Gamma$, however we require at most $C^4$ regularity.
The functionals, which we will obtain below from the second variation, can then be extended to $u \in H^2(\Gamma)$ using density arguments. 
\end{remark}
\begin{remark} \label{remark:firstAndSecondVariaitions}
By definition the first and second variation of a functional $F$ at the point $p$ in the direction of $v$ is
\begin{align*}
    & F'(p)[v] := \frac{d \phi(\varepsilon)}{d\varepsilon}\bigg|_{\varepsilon=0},
    \quad \textnormal{and} \quad
	 F''(p)[v,v] := \frac{d^2 \phi(\varepsilon)}{d\varepsilon^2}\bigg|_{\varepsilon=0},
\end{align*}  
where $\phi(\varepsilon) := F(p + \varepsilon v)$, see, for example, page 688 ff. in \cite{Zeidler}.
However, note that the second variations presented below and in the Appendix are based on the variation of the first variation, that is on
$$
	\frac{d \varphi(\mu)}{d\mu}\bigg|_{\mu=0} \quad \textnormal{with}
	\quad \varphi(\mu) := (F'[v])(p + \mu w).
$$
So, the question is whether this gives the correct expression for the second variation of the considered functionals.
In general this is not quite clear since $\varphi(\mu) \neq \frac{d \phi}{d \varepsilon}\big|_{\varepsilon = \mu}$. This condition will hold for each of our applications however.
For example, for the functional $F=W$, we have
\begin{align*}
	\varphi(\mu) = W'(\Gamma_\mu)[u^\ell \nu^\mu],
	\quad \textnormal{whereas} \quad
	\frac{d \phi}{d \varepsilon}\bigg|_{\varepsilon = \mu} = W'(\Gamma_\mu)[u^\ell \nu^\ell].
\end{align*} 
Here, $\Gamma_\mu := \left\{ c_\mu(p) ~|~ p \in \Gamma \right\}$ is the deformed surface with outward unit normal $\nu^\mu$, 
where $c_\mu$ is defined by $c_\mu := id_{\Gamma} + \mu u \nu$, and 
$u^\ell := u \circ c_\mu^{-1}$ is the lift of $u$ onto $\Gamma_\mu$. Note that 
$\nu^\ell := \nu \circ c_\mu^{-1}$ is the lift of the outward unit normal $\nu$ to $\Gamma$ onto $\Gamma_\mu$.
Using the embedding $x: \mathcal{M} \rightarrow \Gamma$ for $\Gamma$, an embedding $x_\mu$ for $\Gamma_\mu$ is given by $x_\mu := c_\mu \circ x$.
The material derivative (\ref{material_derivative}) with respect to $x_\mu$ of $u^\ell$ and $\nu^\ell$ is, in fact, zero.
On the other hand, the material derivative of $\nu^\mu$ usually does not vanish. 
We therefore find that
$$
	\frac{d^2 \phi(\varepsilon)}{d\varepsilon^2} \bigg|_{\varepsilon =0} - \frac{d \varphi (\mu)}{d \mu} \bigg|_{\mu =0 } 
	= - W'(\Gamma)[u \dot{\partial} \nu].	
$$
Since the material derivative $\dot{\partial} \nu$ is a tangent vector field to $\Gamma$, it follows from the invariance of $W(\Gamma)$ under diffeomorphisms,
that the first variation of $W(\Gamma)$ in the direction of $\dot{\partial} \nu$ vanishes.
We hence obtain that
$$
	\frac{d^2 \phi(\varepsilon)}{d\varepsilon^2} \bigg|_{\varepsilon =0} = \frac{d \varphi (\mu)}{d \mu} \bigg|_{\mu =0 }.
$$ 
The same applies to the other functionals considered in the text.
\end{remark}
The following results hold on arbitrary (sufficiently smooth) surfaces $\Gamma$ (with or without boundary). 
\begin{align}
\label{willmore1var}
& W'(\Gamma)[u\nu]= \int_\Gamma - H \left( \DeltaG u +|\mathcal{H}|^2u -\frac{1}{2}H^2u \right) \; do, \\
\label{willmore2var}
\begin{split}
& W''(\Gamma)[u\nu, g\nu] = \int_\Gamma (\DeltaG g + |\mathcal{H}|^2g)(\DeltaG u + |\mathcal{H}|^2u) +2H \mathcal{H}:(g\nablaG \nablaG u + u\nablaG \nablaG g )  \\
& \quad +2H \mathcal{H}\nablaG u \cdot \nablaG g + Hg \nablaG u \cdot \nablaG H - H^2 \nablaG u \cdot \nablaG g -\frac{3}{2}H^2 u \DeltaG g - H^2 g \DeltaG u \\
& \quad + \left( 2HTr(\mathcal{H}^3) -\frac{5}{2}H^2|\mathcal{H}|^2 + \frac{1}{2}H^4 \right) gu \;do 
\end{split}
\end{align}
This follows from (\ref{Willmore_functional_1var}) and Theorem \ref{Theorem_2var_Willmore} in the appendix.
In order to derive the above formula we have not used integration by parts.
It might therefore also be applied to surfaces with boundary.
We here restrict to closed surfaces. Integration by parts then gives, see also Remark \ref{Remark_integration_by_parts_for_Willmore},
\begin{align}\label{willmore2varIBP}
\begin{split}
& W''(\Gamma)[u\nu, g\nu] = \int_\Gamma (\DeltaG g + |\mathcal{H}|^2g)(\DeltaG u + |\mathcal{H}|^2u) +2H \mathcal{H}:(g\nablaG \nablaG u + u\nablaG \nablaG g )  \\
& \quad +2H \mathcal{H}\nablaG u \cdot \nablaG g - \frac{3}{2} H^2 \nablaG u \cdot \nablaG g -\frac{3}{2}H^2 ( u \DeltaG g + g \DeltaG u )\\
& \quad + \left( 2HTr(\mathcal{H}^3) -\frac{5}{2}H^2|\mathcal{H}|^2 + \frac{1}{2}H^4 \right) gu \;do 
\end{split}
\end{align}
The first variation of the Willmore energy can be found in \cite{Wil93}. The formula for the second variation 
was obtained in \cite{Gla11}. For the sake of completeness, we also present its derivation 
in the Appendix, since it is indeed a crucial part of this paper. Recall that we assume $\Gamma_0$ is chosen so that the first variation term vanishes, see \eqref{eq:WillmoreSurface}. 

To complete the calculation of $J(u)$, defined in \eqref{definition_of_energy_J}, we require the second derivative of the force term.
\[
\frac{\mathrm{d}^2 (\varepsilon\mathcal{F}(\Gamma_\varepsilon))}{\mathrm{d}\varepsilon^2}\bigg|_{\varepsilon=0} = 2 \mathcal{F}'(\Gamma_0)[u\nu]
\]
Here we have applied the definition of the first variation given in Remark \ref{remark:firstAndSecondVariaitions}.
The functional $J$ is a novel quadratic energy with which we will formulate the variational problems related to the surface displacement.

\begin{definition}
Given a surface $\Gamma_0 \subset \mathbb{R}^{3}$, we define the quadratic surface energy $J:H^2(\Gamma_0) \rightarrow \mathbb{R}$ by
\begin{align*}
J(u):&=\frac{1}{2} \frac{\mathrm{d}^2\Je(u)}{\mathrm{d}\varepsilon^2}\bigg|_{\varepsilon=0} = \frac{1}{2}W''(\Gamma_0)[u\nu_0,u\nu_0] - \mathcal{F}'(\Gamma_0)[u].
\end{align*}
\end{definition} 

Under the assumption $\Gamma_0$ is chosen such that the first variation $W'(\Gamma_0)$ vanishes, by the Taylor expansion \eqref{genTaylor}, 
$J(u)$ is an $O(\varepsilon^3)$ order approximation of $\Je(u)$, up to an additive constant.
\begin{lemma}
For an undeformed surface $\Gamma_0 \subset \mathbb{R}^{3}$ chosen such that $W'(\Gamma_0)$ vanishes we have
\[
\Je(u) = W(\Gamma_0) - \varepsilon\mathcal{F}(\Gamma_0) +\varepsilon^2 J(u) + O(\varepsilon^3).
\] 
\end{lemma}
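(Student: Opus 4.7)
The statement is essentially a book-keeping consequence of the Taylor expansion \eqref{genTaylor} already established in the text, together with the hypothesis $W'(\Gamma_0)=0$ and the definition of $J$. My plan is simply to identify the three lowest-order coefficients in \eqref{genTaylor} and check that the remainder is genuinely $O(\varepsilon^3)$.

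First, I would evaluate the zeroth-order term: at $\varepsilon=0$ the graph $\Gamma_\varepsilon$ in \eqref{graphDeformedSurface} reduces to $\Gamma_0$, so $\mathcal{J}_0(u)=\Je(u)\big|_{\varepsilon=0}=W(\Gamma_0)-0\cdot\mathcal{F}(\Gamma_0)=W(\Gamma_0)$, which is independent of $u$. Next, I would compute the first derivative by the product rule applied to $\varepsilon\mapsto W(\Gamma_\varepsilon)-\varepsilon\mathcal{F}(\Gamma_\varepsilon)$:
\[
\frac{\mathrm{d}\Je(u)}{\mathrm{d}\varepsilon}\bigg|_{\varepsilon=0}
= W'(\Gamma_0)[u\nu_0]-\mathcal{F}(\Gamma_0)-\varepsilon\,\mathcal{F}'(\Gamma_\varepsilon)[u\nu_\varepsilon]\bigg|_{\varepsilon=0}
= -\mathcal{F}(\Gamma_0),
\]
where the first term vanishes by the standing assumption $W'(\Gamma_0)=0$ and the last one vanishes through the explicit factor of $\varepsilon$.

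For the quadratic coefficient, I would differentiate once more. The contribution from $W(\Gamma_\varepsilon)$ gives $W''(\Gamma_0)[u\nu_0,u\nu_0]$ (using the second-variation formula \eqref{willmore2var}), while the product rule applied to $\varepsilon\mathcal{F}(\Gamma_\varepsilon)$ produces two equal cross-terms that combine to $2\mathcal{F}'(\Gamma_0)[u\nu_0]$, with the remaining $\varepsilon\mathcal{F}''$ term vanishing at $\varepsilon=0$. Hence
\[
\tfrac{1}{2}\frac{\mathrm{d}^2\Je(u)}{\mathrm{d}\varepsilon^2}\bigg|_{\varepsilon=0}
=\tfrac{1}{2}W''(\Gamma_0)[u\nu_0,u\nu_0]-\mathcal{F}'(\Gamma_0)[u\nu_0] = J(u),
\]
matching the definition of $J$.

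The only genuine subtlety is justifying the $O(\varepsilon^3)$ remainder in \eqref{genTaylor}. For this I would appeal to the smoothness of $\varepsilon\mapsto\Je(u)$ as a function on $(-\delta,\delta)$: for fixed $u\in C^2(\Gamma_0)$ and sufficiently regular $\Gamma_0$, the map $\varepsilon\mapsto\Gamma_\varepsilon$ is smooth (since $\nu_0$ and $u$ are), so both $W(\Gamma_\varepsilon)$ and $\mathcal{F}(\Gamma_\varepsilon)$ are $C^3$ in $\varepsilon$ on a neighbourhood of $0$. Taylor's theorem with integral remainder then gives the claimed error estimate; this is the only step where one must be mildly careful, but it follows directly from the regularity assumptions made in Section~\ref{Section_notation} and Remark~\ref{remark:firstAndSecondVariaitions}. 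Substituting the three computed coefficients into \eqref{genTaylor} yields the stated identity.
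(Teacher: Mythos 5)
Your proposal is correct and follows exactly the route the paper implicitly takes: read off the three lowest coefficients of the Taylor expansion \eqref{genTaylor}, use $W'(\Gamma_0)[u\nu_0]=0$ to kill the first-order contribution from $W$, note the product-rule term $\frac{\mathrm{d}^2}{\mathrm{d}\varepsilon^2}(\varepsilon\mathcal{F}(\Gamma_\varepsilon))|_{\varepsilon=0}=2\mathcal{F}'(\Gamma_0)[u\nu_0]$ to recover $J(u)$, and appeal to smoothness in $\varepsilon$ (cf.\ Remark~\ref{remark:firstAndSecondVariaitions}) for the $O(\varepsilon^3)$ remainder. The paper states the lemma as an immediate consequence of the preceding discussion; your write-up merely makes those book-keeping steps explicit.
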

Henceforward, we will neglect the constant and the $O(\varepsilon^3)$ terms. 
We interpret $J$ as a new energy. We aim to minimize this energy in the next sections.
This is of course only possible if the total energy is bounded from below. Since we want to determine minimizers by considering the associated variational problems,
and in particular, compute numerical approximations, we limit the space of admissible variations so that the bilinear form corresponding to the second variation term $W''(\Gamma_0)[\cdot \nu,\cdot \nu]$
is coercive in the $H^2(\Gamma_0)$-norm over this space. Note that in \cite{Sko15}, it was recently proved 
that Willmore immersions are local minimizers if the second variation of the Willmore functional 
is positive semi-definite with kernel equal to the sum of the space of infinitesimal M\"obius transformations and of the space of tangential variations. In our setting $\Gamma_0$ is a sphere or a Clifford torus and the latter condition is satisfied. Furthermore we only consider normal variations, thus there is a space of admissible variations with finite codimension over which we are able to formulate well-posed problems.   

\subsection{The kernel of $W''(\Gamma_0)$ in the cases of a sphere and a Clifford torus} 
We now examine the undeformed surfaces $\Gamma_0 = S(0,R)$, a sphere with radius $R$ centred at the origin and $\Gamma_0 = T(R,R\sqrt{2})$, a Clifford torus with tube radius $R$ centred at the origin. Both of these surfaces are Willmore surfaces, that is $W'(\Gamma_0)$ vanishes. The quadratic surface energy is given by
\[
J(u,\mu) = \frac{1}{2} a(u,u) - \mathcal{F}'(\Gamma_0)[u],
\]     
where we have introduced the bilinear form $a:H^2(\Gamma)\times H^2(\Gamma) \rightarrow \mathbb{R}$ defined by
\begin{equation} \label{eq:aFunctionalDefn}
a(u,v) := W''(\Gamma_0)[u\nu,v\nu].
\end{equation}
The bilinear form is bounded, symmetric and positive semi-definite, for the sphere this is immediate from Corollary \ref{Corollary_2var_Willmore_on_sphere} in the appendix, for the Clifford torus see \cite{MonNgu14,Wei78}. As remarked above, to formulate well-posed problems we will work in a subspace of $H^2(\Gamma)$ over which $a(\cdot,\cdot)$ is coercive. To find such a subspace one must first identify the kernel of $a$, that is the set
\[
Ker(a):= \left\{ v \in H^2(\Gamma_0) ~|~ a(v,w)=0 \;\forall w \in H^2(\Gamma_0) \right\}.
\]
For both the sphere and Clifford torus the kernel is finite dimensional, we will identify a basis in each case, that is we will write the kernel in the form
\[
Ker(a) = sp\left\{ f_1,...,f_M \right\},
\] 
for some $M:= dim(Ker(a)) \geq 1$.
This is done in the following lemma.

\begin{lemma} \label{lem:Kera} \ 
Let $\nu_1, \nu_2, \nu_3: \Gamma_0 \rightarrow \mathbb{R}$ denote the components of the outward normal vector field $\nu_{\Gamma_0}$.
\begin{itemize}
\item If $\Gamma_0$ is a sphere or a Clifford torus then
\[
Ker(a) = Moeb(\mathbb{R}^3)\cdot\nu_{\Gamma_0} := \left\{ u \in H^2(\Gamma_0) ~|~ u(x)=f(x)\cdot\nu_{\Gamma_0}(x) \text{ for some } f \in Moeb(\mathbb{R}^3)  \right\}.
\] 

\item If $\Gamma_0$ is a sphere then 
\[
Ker(a)=sp\left\{1,\nu_1,\nu_2,\nu_3 \right\}.
\] 
\item If $\Gamma_0$ is a Clifford torus then
\begin{align*}
	Ker(a)&=sp\left\{\nu_1,\nu_2,\nu_3, f_4(x) := x_3\nu_1-x_1\nu_3, f_5(x) := x_3\nu_2-x_2\nu_3, f_6(x):= x\cdot\nu,\right. \\
	&\qquad \quad \left. f_7(x):= 2x_1(x\cdot\nu)-|x|^2\nu_1, f_8(x):= 2x_2(x\cdot\nu)-|x|^2\nu_2 
	 \right\}.
\end{align*}
\item If $\Gamma_0$ is a sphere or Clifford torus then there exists $C(\Gamma_0)>0$ such that
\[
a(v,v) \geq C(\Gamma_0) \|v\|_{H^2(\Gamma_0)} \quad \forall v \in Ker(a)^\perp,
\]
where $\perp$ denotes orthogonality with respect to the $H^2(\Gamma_0)$ inner product.
\end{itemize}
\end{lemma}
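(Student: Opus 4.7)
The plan is to identify the kernel via the conformal invariance of the Willmore energy, extract explicit bases by computing the normal components of the ten generators of the Möbius Lie algebra, and then obtain coercivity via G\aa rding's inequality combined with a compactness argument.

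For the first bullet, the starting point is the classical conformal invariance of the Willmore functional: $W(\Phi(\Gamma_0)) = W(\Gamma_0)$ for every M\"obius transformation $\Phi$ of $\mathbb{R}^3$. Taking a one-parameter family $\Phi_t$ with $\Phi_0 = \mathrm{id}$ and infinitesimal generator $f \in Moeb(\mathbb{R}^3)$, and using that $\Gamma_0$ is a Willmore surface so that tangential components of $f$ contribute only a reparametrisation, the identity $\frac{d^2}{dt^2} W(\Phi_t(\Gamma_0))|_{t=0} = 0$ reduces to $a(f\cdot\nu_{\Gamma_0}, f\cdot\nu_{\Gamma_0}) = 0$. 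This gives the inclusion $Moeb(\mathbb{R}^3)\cdot\nu_{\Gamma_0} \subseteq Ker(a)$. The reverse inclusion --- that the kernel of $W''$ is spanned exactly by infinitesimal conformal deformations --- is the content of Weiner's theorem \cite{Wei78} for the sphere and of \cite{MonNgu14} for the Clifford torus, which I would invoke directly rather than reprove.

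For the explicit bases I would fix the standard generators of the ten-dimensional conformal Lie algebra: translations $\tau_i(x)=e_i$, rotations $\rho_{ij}(x) = x_i e_j - x_j e_i$, the dilation $\delta(x)=x$, and the special conformal fields $\sigma_b(x) = 2(b\cdot x)x - |x|^2 b$, and compute each normal component. On $S(0,R)$ with $\nu=x/R$ one gets $\tau_i\cdot\nu = \nu_i$, $\rho_{ij}\cdot\nu \equiv 0$, $\delta\cdot\nu = R$, and $\sigma_b\cdot\nu = R^2(b\cdot\nu)$, so the ten generators collapse to the four independent functions $\{1,\nu_1,\nu_2,\nu_3\}$. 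On $T(R,R\sqrt{2})$ with symmetry axis $e_3$, the axial rotation $\rho_{12}$ is tangential (giving $x_1\nu_2 - x_2\nu_1 \equiv 0$), while $\rho_{13},\rho_{23}$ produce $f_4,f_5$; translations give $\nu_1,\nu_2,\nu_3$; the dilation gives $f_6 = x\cdot\nu$; and $\sigma_{e_1},\sigma_{e_2}$ give $f_7,f_8$. A short computation using the standard parametrisation yields the torus identities $|x|^2 = R^2 + 2R f_6$ and $x_3 = R\nu_3$, from which $\sigma_{e_3}\cdot\nu = -R^2 \nu_3$ is already in the span, matching the dimension count $8$.

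For the coercivity bullet, observe that $a$ is the quadratic form of an elliptic self-adjoint fourth-order operator on the closed surface $\Gamma_0$, with principal symbol that of $\DeltaG^2$ (visible from the leading $\DeltaG u\,\DeltaG v$ term in \eqref{willmore2varIBP}). G\aa rding's inequality therefore furnishes constants $c_0,c_1 > 0$ with
\begin{equation*}
a(v,v) + c_1\|v\|_{L^2(\Gamma_0)}^2 \geq c_0\|v\|_{H^2(\Gamma_0)}^2, \qquad \forall\, v \in H^2(\Gamma_0).
\end{equation*}
If coercivity on $Ker(a)^\perp$ failed, there would exist a sequence $v_n \in Ker(a)^\perp$ with $\|v_n\|_{H^2(\Gamma_0)}=1$ and $a(v_n,v_n)\to 0$. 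By the compact embedding $H^2(\Gamma_0)\hookrightarrow L^2(\Gamma_0)$ a subsequence satisfies $v_n\rightharpoonup v$ weakly in $H^2$ and $v_n\to v$ strongly in $L^2$; G\aa rding forces $\|v\|_{L^2(\Gamma_0)} \geq \sqrt{c_0/c_1} > 0$, while weak lower semicontinuity of the non-negative quadratic form $a$ gives $a(v,v)=0$, so $v\in Ker(a)$. Closedness of $Ker(a)^\perp$ in $H^2$ places $v$ in $Ker(a)^\perp$ as well, forcing $v=0$, a contradiction. The step I expect to be most delicate is verifying that the span of normal components of conformal vector fields really exhausts $Ker(a)$; this relies on the deeper results of \cite{Wei78,MonNgu14}, which we treat as black boxes.
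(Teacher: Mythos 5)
Your proof is correct, and your explicit computations of the normal components of the M\"obius generators agree with the bases in the statement; the route you take, however, differs from the paper's in two notable places. For the sphere the paper works directly with the explicit form $a(u,v)=\int_{\Gamma_0} \Delta_{\Gamma_0} u\,\Delta_{\Gamma_0} v - \tfrac{2}{R^2}\nabla_{\Gamma_0} u\cdot\nabla_{\Gamma_0} v\,do$ and the known spectrum of $-\Delta_{\Gamma_0}$ on $S(0,R)$: it checks $\{1,\nu_1,\nu_2,\nu_3\}\subseteq Ker(a)$ by hand and then proves a Poincar\'e inequality (using the eigenvalue $\lambda_2=6/R^2$), which simultaneously yields coercivity on the orthogonal complement and, since coercivity on the complement of a subspace containing the kernel forces equality, the identity $Ker(a)=sp\{1,\nu_1,\nu_2,\nu_3\}$, all with explicit constants and with no appeal to conformal geometry or to \cite{Wei78}. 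You instead derive the inclusion $Moeb(\mathbb{R}^3)\cdot\nu_{\Gamma_0}\subseteq Ker(a)$ from conformal invariance of $W$ (a sound argument, since $W'(\Gamma_0)=0$ kills the acceleration term in the second derivative and tangential components drop out of the Hessian at a critical point) and then cite Weiner's theorem for the sphere and \cite{MonNgu14} for the torus for the reverse inclusion, treating both surfaces uniformly. For coercivity the paper simply cites \cite{MonNgu14,Sko15} on the torus, whereas your G\aa rding-plus-compact-embedding argument establishes the bound from scratch once the kernel is known (relying on positive semi-definiteness of $a$ for the weak lower semi-continuity step, which is itself cited). Both routes are sound: yours is more modular and treats the two surfaces on an equal footing, while the paper's sphere argument is more elementary, gives quantitative constants, and recovers the kernel characterisation without invoking the deeper result of \cite{Wei78}.
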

 
\noindent Here $Moeb(\mathbb{R}^3)$ denotes the set of infinitesimal M\"{o}bius transformations on $\mathbb{R}^3$. For an abstract definition of this set see \cite{Res89}, here we will use an equivalent characterisation, also presented in \cite{Res89}.

\begin{proof}
We begin with $\Gamma_0=S(0,R)$, a sphere. The bilinear form $a$ is explicitly calculated in the appendix as Corollary \ref{Corollary_2var_Willmore_on_sphere}. Here we consider $n=2$, hence the bilinear form is given by
\[
a(u,v) = \int_{\Gamma_0} \Delta_{\Gamma_0} u \Delta_{\Gamma_0} v - \frac{2}{R^2} \nabla_{\Gamma_0} u \cdot \nabla_{\Gamma_0} v \;do
\] 
Note that $1 \in Ker(a)$ and that, on a sphere, each component of the normal $\nu_i$ is an eigenfunction of $-\Delta_{\Gamma_0}$ with eigenvalue $2/R^2$, hence
\begin{equation} \label{sphereKernelInclusion}
Sp\left\{ 1,\nu_1,\nu_2,\nu_3 \right\} \subset Ker(a).
\end{equation}
To obtain equality in this inclusion and prove the coercivity statement for a sphere we will use the following Poincar\'{e} inequality.
\begin{equation} \label{PoincareIneq}
\int_{\Gamma_0} u^2 \;do \leq \frac{R^2}{6} \int_{\Gamma_0} |\nabla_{\Gamma_0} u|^2  \;do \leq \frac{R^4}{36} \int_{\Gamma_0} (\Delta_{\Gamma_0} u)^2  \;do \quad \forall u \in Sp\left\{ 1,\nu_1,\nu_2,\nu_3 \right\}^\perp,
\end{equation} 
where again $\perp$ denotes orthogonality with respect to the $H^2(\Gamma_0)$ inner product.
To prove this, note for a sphere of radius $R$ the negative Laplace-Beltrami operator, $-\DeltaG$, has eigenvalues
\[
\lambda_k=\frac{k(k+1)}{R^2} \quad \text{with multiplicities} \quad N_k=\binom{k+2}{2}, \quad k \in \mathbb{N}\cup \left\{ 0 \right\}.
\] 
See \cite{Shu87} for a proof on the unit sphere, from which we deduce the above. It follows that $\lambda_0=0$ and $N_0=1$, thus the zero eigenfunctions are simply constant functions. The next eigenvalue $\lambda_1=2/R^2$ has multiplicity $N_1 = 3$. We then see the $\lambda_1$-eigenfunctions are spanned by $\left\{ \nu_1,\nu_2,\nu_3 \right\}$. Thus the optimal Poincar\'{e} constant over $Sp\left\{ 1,\nu_1,\nu_2,\nu_3 \right\}^\perp$, $C_P$, satisfies
\[
C_P^{-2} = \inf_{v \in S} \frac{\int_\Gamma |\nablaG v|^2 \;do}{\int_\Gamma v^2 \;do} = \lambda_2 
\]
where $S:=\left\{ u \in H^1(\Gamma_0) \;|\; 0=\int_\Gamma u \;do = \int_\Gamma u \nu_i \;do, \: i=1,2,3 \right\}$ and $\lambda_2$ is the second non-zero eigenvalue for the negative Laplace-Beltrami operator. The validity of the inequality used above relies upon the fact that $H^2(\Gamma_0) \cap S = Sp\left\{ 1,\nu_1,\nu_2,\nu_3 \right\}^\perp$, that is for $H^2(\Gamma_0)$ functions, membership of $S$ encodes both $L^2$ and $H^2$ orthogonality to $Ker(a)$. To see this, a simple calculation shows
\begin{equation} \label{eq:L2H2OrthogSphere}
( u, 1 )_{H^2(\Gamma_0)} = (u,1)_{L^2(\Gamma_0)} \quad \text{and} \quad ( u, \nu_i )_{H^2(\Gamma_0)} = \left(\tfrac{4}{R^4} + \tfrac{2}{R^2} + 1 \right)
(u,\nu_i)_{L^2(\Gamma_0)} \quad \forall u \in H^2(\Gamma_0).
\end{equation}
For a sphere $\lambda_2=6/R^2$, proving the first inequality in \eqref{PoincareIneq}. The second inequality then follows by integration by parts and the H\"{o}lder inequality. 
Coercivity of $a(\cdot,\cdot)$ over $Sp\left\{1,\nu_1,\nu_2,\nu_3 \right\}^\perp$ follows from \eqref{PoincareIneq}. 
From this we deduce equality in \eqref{sphereKernelInclusion} and coercivity of $a(\cdot,\cdot)$ over $Ker(a)^\perp$ for a sphere.    

For $\Gamma_0$ a Clifford torus the first and final statements in this lemma are proven in \cite{MonNgu14,Sko15}. For the remaining two statements we use the characterisation of $Moeb(\mathbb{R}^3)$ given in \cite{Res89}.   
\[
f \in Moeb(\mathbb{R}^3) \text{ if and only if } f(x)=a+(K+\alpha \unit)x +2(b\cdot x)x -|x|^2b
\]  
where $\alpha \in \mathbb{R}$, $a,b \in \mathbb{R}^3$, $\unit \in \mathbb{R}^{3 \times 3}$ is the identity matrix and $K \in \mathbb{R}^{3 \times 3}$ is a skew-symmetric matrix. We may regard $Moeb(\mathbb{R}^3)$ as a 10 dimensional subspace of $C^\infty(\mathbb{R}^3)$ and can thus determine $Moeb(\mathbb{R}^3)\cdot \nu_{\Gamma_0}$ for each of our choices of $\Gamma_0$, using a suitable parametrisation of each surface.
\end{proof}
Note that the kernel is not 10 dimensional in either case. This is due to the fact that for some infinitesimal M\"{o}bius transformations $f$, $f(x)$ lies in the tangent plane to $\Gamma_0$ for each $x \in \Gamma_0$.
Working over $Ker(a)^\perp$ we are able to formulate well posed mathematical problems. One can justify this step physically as non admissible variations are those which alter the surface but do not change the Willmore energy $W$ up to second order.  

As we will work with $Ker(a)^\perp$ frequently it is useful to detail three methods by which this set can be characterised. Firstly, the standard definition gives
\[
Ker(a)^\perp = \left\{ v \in H^2(\Gamma_0) ~|~ a( v,w ) = 0 \; \forall w \in Ker(a) \right\}.
\]
Secondly, by writing $Ker(a)= sp \left\{ f_1,...,f_M \right\}$, it follows
\[
Ker(a)^\perp = \left\{ v \in H^2(\Gamma_0) ~|~ (v,f_i)_{H^2(\Gamma_0)}=0 \; \forall 1 \leq i \leq M \right\}.
\]
Finally, it follows from our choice of $H^2(\Gamma_0)$ inner product that
\[
Ker(a)^\perp = \left\{ v \in H^2(\Gamma_0) ~|~ (v,g_i)_{L^2(\Gamma_0)}=0 \; \forall 1 \leq i \leq M \right\},
\]
where $g_i:= (\Delta_{\Gamma_0}^2 - \Delta_{\Gamma_0} + 1)f_i$. This characterisation follows by integrating the $H^2(\Gamma_0)$ inner product by parts, notice each $f_i$ is sufficiently regular to permit this.


\section{A spherical membrane under tension} \label{sec:positiveTension} 
We now consider a spherical membrane, $\Gamma_0 = S(0,R)$,  with positive surface tension $\sigma > 0$, with the surface energy functional $\mathcal{W}$ given in \eqref{W_energy}. Note that we will only consider spherical membranes when considering a positve surface tension. To formulate relevant minimisation problems we also introduce a fixed volume constraint. This is required as $W$, the Willmore energy, is scale invariant but the surface tension term proportional to $\sigma$ is not. 

The fixed volume constraint is  physically reasonable.
Biological membranes are usually semipermeable, which means that certain molecules or ions
cannot diffuse through the membrane, whereas this is possible for other molecules like for water.
If such a membrane is contained in an isotonic environment,
that is a solvent which has the same effective solute concentration as the solution enclosed by the
membrane, the volume enclosed by this membrane does not change. 

We therefore here assume that
the volume enclosed by our deformed hypersurface $\Gamma$ is a constant given by $V_0 > 0$.
Let $D \subset \mathbb{R}^3$ denote the bounded domain which has $\Gamma$ as its
point boundary set. Then the volume of $D$ is given
by 
$$
	| D | := \int_D 1 \; dx = \frac{1}{3}\int_{D} \nabla \cdot x \; dx
	= \frac{1}{3} \int_{\partial D} x \cdot \nu \; do = \frac{1}{3} \int_{\Gamma} id_{\Gamma} \cdot \nu \; do.
$$
The assumption of a fixed enclosed volume can hence be written as $V(\Gamma) = V_0$, where
$$
	 V(\Gamma) := \frac{1}{3} \int_{\Gamma} id_{\Gamma} \cdot \nu \; do. 
$$
We introduce this constraint into the energy functional via a Lagrange multiplier.
This yields to the following Lagrangian functional which will be the principal object of our study in this section  
\begin{equation} 
\label{exact_energy}
\mathcal{L}(\Gamma,\lambda) := \int_\Gamma \frac{1}{2} \kappa H^2 + \sigma \; do 
+ \lambda \left( V(\Gamma) - V_0 \right).  
\end{equation}
\begin{remark}
In the variational formulation the term associated with the Lagrange multiplier $\lambda$ corresponds to a constraining force, which in the above case can be interpreted as a hydrostatic pressure maintaining the volume constant.
\end{remark}
In this section we will consider a critical point for the Lagrangian $\mathcal{L}$, which we will denote by $(\Gamma_0,\lambda_0)$. This means that $(\Gamma_0,\lambda_0)$ satisfies 
\[
(A1)
\left\{
\begin{aligned}
&\frac{d}{ds}\mathcal{L}(\Gamma_0,\lambda_0 + s\mu) \bigg|_{s=0} = 0, \qquad \forall \mu \in \mathbb{R},  \\
&\frac{d}{ds}\mathcal{L}(\Gamma_s,\lambda_0) \bigg|_{s=0} = 0, \qquad \forall u \in C^2(\Gamma_0),
\end{aligned}
\right.
\]
where $\Gamma_s:=\left\{ p + s (u\nu_0)(p) \;|\; p \in \Gamma_0 \right\}$. We will refer to $\Gamma_0$ as an undeformed surface. 
As for the tensionless membrane we will consider small deformations that are due to some small external forces. They will be incorporated into this model by perturbing 
the Lagrangian.

\subsection{Deformations due to small external forces}
As for the tensionless membrane we consider arbitrary small forces $\varepsilon\mathcal{F}$ which give rise to deformed surfaces of the form $\Gamma_\varepsilon$ as given in \eqref{graphDeformedSurface}. We also assume it is possible to write the Lagrange multiplier associated with the deformed membrane as $\lambda_\varepsilon=\lambda_0 + \varepsilon \mu$ for some $\mu \in \mathbb{R}$. The perturbed Lagrangian is hence given by
\begin{equation} \label{lagrangian_with_force}
\mathcal{L}_\varepsilon(\Gamma_\varepsilon,\lambda_\varepsilon):= \mathcal{W}(\Gamma_\varepsilon) + \lambda_\varepsilon (V(\Gamma_\varepsilon) - V(\Gamma_0) ) - \varepsilon \mathcal{F}(\Gamma_\varepsilon). 
\end{equation} 

We are motivated by attempting to find critical points of this energy. The rationale for this is that if $\Gamma_\varepsilon$ minimises the perturbed Helfrich energy
\[
\mathcal{W}(\Gamma_\varepsilon) - \varepsilon \mathcal{F}(\Gamma_\varepsilon),
\]    
subject to the volume constraint $V(\Gamma_\varepsilon)=V(\Gamma_0)$, then there exists a $\lambda_\varepsilon \in \mathbb{R}$ such that $(\Gamma_\varepsilon,\lambda_\varepsilon)$ is a critical point for $\mathcal{L}_\varepsilon$. Similarly to the construction of $J$ in the tensionless case, we aim to find a good approximation for this Lagrangian for which the determination of critical points reduces to a linear PDE. To do so we perform a second order Taylor expansion in $\varepsilon$, using a slight abuse of notation $\mathcal{L}_\varepsilon(u,\mu)=\mathcal{L}_\varepsilon (\Gamma_\varepsilon,\lambda_\varepsilon)$.  
\begin{equation} \label{L_taylor_series}
\mathcal{L}_\varepsilon(u,\mu)= \mathcal{L}_0(u,\mu) + \varepsilon \frac{\mathrm{d}\mathcal{L}_\varepsilon(u,\mu)}{\mathrm{d}\varepsilon}\bigg|_{\varepsilon=0}   
+ \frac{\varepsilon^2}{2}  \frac{\mathrm{d}^2\mathcal{L}_\varepsilon (u,\mu)}{\mathrm{d}\varepsilon^2}\bigg|_{\varepsilon=0}  
+ O(\varepsilon^3).
\end{equation}
We observe that the first term $\mathcal{L}_0(u,\mu)=\mathcal{W}(\Gamma_0)$ does not depend on $u$ or $\mu$.
Since $(\Gamma_0,\lambda_0)$ is assumed to be a critical point of $\mathcal{L}$, the second term reduces to $-\mathcal{F}(\Gamma_0)$ and thus does not depend on $u$ or $\mu$. 
We therefore see that the lowest order term that depends on $u$ or $\mu$ is the second order term.
The Taylor expansion hence can be written as
\[
\mathcal{L}(u,\mu) = \mathcal{W}(\Gamma_0) -\varepsilon\mathcal{F}(\Gamma_0)  + \varepsilon^2 L(u,\mu) + O(\varepsilon^3),
\] 
with
\begin{equation}
	L(u,\mu) := \frac{1}{2}\frac{\mathrm{d}^2\mathcal{L}_\varepsilon(u,\mu)}{\mathrm{d}\varepsilon^2}\bigg|_{\varepsilon=0}.
	\label{definition_of_lagrangian_L}
\end{equation}
The approximate Lagrangian $L(u,\mu)$ is the sum of variations of the functionals $\mathcal{W}$, $V$ and $\mathcal{F}$. 
To derive an explicit formula for $L(u,\mu)$ will be part of the next section. 
Instead of determining critical points of the original Lagrangian in \eqref{lagrangian_with_force}, 
we aim to approximate them by considering the novel Lagrangian $L$.

\subsection{Derivation of a Lagrangian for the height function}
Similarly to the treatment of $J$ in the tensionless case we calculate $L$ in terms of variations of its constituent functionals. The second variation of $\mathcal{W}$ is calculated by combining the second variations of the Willmore functional $W$ and the area functional $A$, both given in the appendix. We take derivatives of the force term involving $\mathcal{F}$ as previously and also require variations of the volume functional $V$ which are also computed in the appendix. The functional $L$ is a novel quadratic Lagrangian with which we will formulate the variational problems related to surface displacement.

\begin{definition}
For the surface $\Gamma_0 \subset \mathbb{R}^{3}$ with associated Lagrange multiplier $\lambda_0 \in \mathbb{R}$, the quadratic surface Lagrangian $L:H^2(\Gamma_0) \times \mathbb{R} \rightarrow \mathbb{R}$ is given by
\begin{align*}
L(u,\mu):&=\frac{1}{2} \frac{\mathrm{d}^2\mathcal{L}_\varepsilon(u)}{\mathrm{d}\varepsilon^2}\bigg|_{\varepsilon=0} \\
& = \frac{1}{2}\mathcal{W}''(\Gamma_0)[u\nu_0,u\nu_0] + \frac{1}{2}\lambda_0 V''(\Gamma_0)[u\nu_0,u\nu_0] + \mu V'(\Gamma_0)[u\nu_0] - \mathcal{F}'(\Gamma_0)[u]. 
\end{align*}
\end{definition} 
 
Under the assumption $(\Gamma_0,\lambda_0)$ is chosen such that the first variation $\mathcal{L}'(\Gamma_0,\lambda_0)$ vanishes, by the Taylor expansion \eqref{L_taylor_series}, 
$L(u,\mu)$ is an $O(\varepsilon^3)$ order approximation of $\mathcal{L}_\varepsilon(u,\mu)$, up to an additive constant.
Henceforward, we will neglect the terms which do not depend on $u$ or $\mu$ and the $O(\varepsilon^3)$ terms.
In this case the only undeformed surface we consider is a sphere, $\Gamma_0 =S(0,R)$. Fixing the Lagrange multiplier $\lambda_0 = -\sigma H = -2\sigma /R$ ensures that $(A1)$ is satisfied. The linearised Lagrangian is given explicitly by
\[
L(u,\mu)= \frac{1}{2}\int_{\Gamma_0} \kappa (\Delta_{\Gamma_0} u)^2 
+\left( \sigma -\frac{2\kappa}{R^2} \right)|\nabla_{\Gamma_0} u|^2 -\frac{2\sigma}{R^2} u^2 + 2\mu u \;do - \mathcal{F}'(\Gamma_0)[u].
\] 

Similarly to the tensionless case, it is important to identify a subspace of $H^2(\Gamma_0)$ over which the bilinear form corresponding to the quadratic part of the Lagrangian is coercive. In this case the appropriate bilinear form is given by
\begin{equation} \label{eq:a_sigmaDefn}
a_\sigma (u,v):= \int_{\Gamma_0} \kappa \Delta_{\Gamma_0} u\Delta_{\Gamma_0} v 
+\left( \sigma -\frac{2\kappa}{R^2} \right)\nabla_{\Gamma_0} u \cdot \nabla_{\Gamma_0} v -\frac{2\sigma}{R^2} uv \;do
\end{equation} 
Notice that $a_\sigma$ is coercive over $Ker(a)^\perp$, where $Ker(a)=sp\left\{1,\nu_1,\nu_2,\nu_3 \right\}$, again we refer to orthogonality with respect to the $H^2(\Gamma_0)$ inner product but recall this is equivalent to orthogonality with respect to the $L^2(\Gamma_0)$ inner product in this case (see \eqref{eq:L2H2OrthogSphere}). 
Furthermore, $Ker(a)^\perp$ is the largest subspace of $H^2(\Gamma_0)$ over which it is coercive. Notice also that on such a subspace the term associated with the linearised Lagrange multiplier vanishes
\[
\int_{\Gamma_0} 2\mu u \;do = 0 \quad \text{for all } (u,\mu) \in Ker(a)^\perp \times \mathbb{R}.
\]  
We can thus pose variational problems in precisely the same manner for the tensionless case and for membranes under tension, simply by using the bilinear form $a$ or $a_\sigma$ as appropriate and working over the space $Ker(a)^\perp$. Studying such variational problems will be the subject of the next section. 


\section{Minimising the linearised Willmore functional with point force loading and point displacement constraints} 
\label{Section_point_forces}
In this section we will take the undeformed surface to be a sphere, $\Gamma = S(0,R)$, or a Clifford torus $\Gamma = T(R,R\sqrt{2})$. However, we will only consider problems involving surface 
tension, that is $\sigma>0$, on a sphere $\Gamma=S(0,R)$. Note we drop the subscript for the undeformed surfaces, simply referring to them as $\Gamma$.
As done for the flat case, see \cite{EllGraHobKorWol15} Section 7, we may study the interactions of the membrane with thin filaments. These filaments are anchored to the cytoskeleton. Their effects are modelled by applying a point force or point constraint to the membrane. We begin with point forces. 

\subsection{Point forces} 
We begin by studying the effect of point forces applied in the normal direction to $\Gamma$. Considering $N$ point forces at locations $X_1,...,X_N \in \Gamma$, which we will henceforth denote by $X:=(X_1,...,X_N) \in \Gamma^N$. We consider a functional $\mathcal{F}_X$ in \eqref{energy_with_force_term} giving rise to  $\mathcal{F}_X'(\Gamma)$, such that  
\begin{equation}
\mathcal{F}_X'(\Gamma)[u]:= \sum_{i=1}^{N}  \beta_i u(X_i).
\end{equation}
Here $\beta_i \in \mathbb{R}\setminus \left\{ 0 \right\}$ are constants related to the magnitudes of the forces, hence the force term measures the work done by the point forces. To emphasise the dependence of the resulting quadratic energy functional $J$ upon the locations of the point forces $X$ we will use the notation $\mathcal{E}:H^2(\Gamma)\times \Gamma^N \rightarrow \mathbb{R}$ defined by
\[
\mathcal{E}(u,X):=\frac{1}{2}a_\sigma(u,u) - \mathcal{F}'_X(\Gamma)[u].
\] 
Here $a_\sigma$ is the bilinear functional defined in \eqref{eq:a_sigmaDefn} for $\sigma > 0$ and the second variation of the Willmore functional, defined in \eqref{eq:aFunctionalDefn}, for $\sigma = 0$.
In light of the previous two sections, to formulate a well posed minimisation for $\mathcal{E}(\cdot,X)$ problem we work over the linear space
\[
V:=Ker(a)^\perp = \left\{ v \in H^2(\Gamma) ~|~ ( v,w )_{H^2(\Gamma)} = 0 \; \forall w \in Ker(a) \right\}.
\]
Here $Ker(a)$, which depends upon the choice of undeformed surface $\Gamma$, is constructed as in Lemma \ref{lem:Kera}. Note that we could attempt to pose a minimisation problem over the whole space $H^2(\Gamma)$ but this would need a compatibility condition on the linear functional $\mathcal{F}'_X$ for existence and constraints on $u$ for uniqueness. We now state the energy minimisation problem and give an equivalent variational form. 

\begin{problem}[Point forces at fixed locations]\ \\ \label{prob:fixed_forces}
For given $X\in \Gamma^N$ find $u_X \in V$ satisfying the two equivalent properties
\begin{enumerate}[label=(\alph*)]
\item $u_X$ minimises $\mathcal{E}(\cdot,X)$ on $V$,
\item $a_\sigma(u,v) = \sum_{k=1}^N \beta_k v(X_k)$ for all $v \in V$.
\end{enumerate}
\end{problem}  
\noindent Existence and uniqueness of a solution to this problem follows from the Lax-Milgram theorem. 
Note, for a sphere the projection out of $Ker(a)$ can be justified physically. The minimisation problem above has a corresponding PDE representation involving four Lagrange multipliers, as $Ker(a)=Sp\left\{1,\nu_1,\nu_2,\nu_3 \right\}$ is four dimensional for a sphere. The Lagrange multiplier associated with the constant function $1$ corresponds to a linearised hydrostatic pressure which enforces the volume constraint. Each of the three remaining Lagrange multipliers is associated with one of the components of the normal $\nu_i$. It can be shown that these multipliers correspond to linearised reaction forces which prevent $O(\varepsilon)$ translations of the centre of mass of the domain $D$ enclosed by $\Gamma$.

We also consider varying the locations $X \in \Gamma^N$, this problem can be stated in two equivalent forms.

\begin{problem}[Point forces with varying locations]\  \label{prob:varying_forces}
\begin{enumerate}[label=(\alph*)]
\item Find $(u,X) \in V \times \Gamma^N$ minimising the energy $\mathcal{E}(\cdot,\cdot) $ on $V \times \Gamma^N$. \label{varying_forces_handX}
\item Find $X \in \Gamma^N$ minimising the energy $X \mapsto \mathcal{E}(u_X,X) $ on $\Gamma^N$. \label{varying_forces_justX}
\end{enumerate}
\end{problem}
Existence of solutions for \ref{varying_forces_handX} follows by the same general theory that was applied in the flat case, see \cite{EllGraHobKorWol15} Proposition 9.6. Note \ref{varying_forces_handX} and \ref{varying_forces_justX} are equivalent in the sense that $(u,X)$ solves \ref{varying_forces_handX} if and only if $X$ solves \ref{varying_forces_justX} and $u=u_X$. We will thus refer to these equivalent minimisation problems simply as Problem \ref{prob:varying_forces}. 
\begin{proposition}
Without loss of generality assume $\beta_1 \leq \beta_2 \leq ... \leq \beta_N$.
\begin{description}
\item[Suppose $\mathbf{\beta_1 > 0}$ or $\mathbf{\beta_N < 0}$.] \hfill \\
$X \in \Gamma^N$ solves Problem \ref{prob:varying_forces} if and only if $X_i=X_0$ for all $i=1,...,N$ where $X_0 \in \Gamma$ is a solution to Problem \ref{prob:varying_forces} with parameters $\tilde{N}=1$ and $\tilde{\beta_1}=1$.
\item[Suppose $\mathbf{\beta_k <0}$ and $\mathbf{\beta_{k+1} >0}$ for some $\mathbf{1 \leq k \leq N-1}$.] \hfill \\
$X \in \Gamma^N$ solves Problem \ref{prob:varying_forces} if and only if both of the following hold.
\begin{enumerate}
\item $X_i=X^-$ for all $i=1,...,k$ and $X_i=X^+$ for all $i=k+1,...,N$.
\item $(X^+,X^-) \in \Gamma^2$ solves Problem \ref{prob:varying_forces} with parameters $\tilde{N}=2$ and \\
$\tilde{\beta} = \left(\sum_{i=1}^k \beta_i, \sum_{i=k+1}^N \beta_i \right)$.
\end{enumerate}
\end{description}
\end{proposition}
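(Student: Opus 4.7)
The strategy is to pass to the reduced energy $X \mapsto \mathcal{E}(u_X, X)$, recast it via the reproducing kernel associated with $(V, a_\sigma)$, and then combine a triangle-inequality bound with a swap perturbation to force positive (resp.\ negative) $\beta$-positions to cluster.

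First, since $a_\sigma$ is coercive on $V$ by Lemma \ref{lem:Kera} and $H^2(\Gamma) \hookrightarrow C^0(\Gamma)$ in dimension two, each point evaluation is a bounded linear functional on $(V, a_\sigma)$. Riesz representation yields $G_Y \in V$ with $a_\sigma(v, G_Y) = v(Y)$ for all $v \in V$; set $G(X, Y) := G_Y(X) = G_X(Y)$. The unique solution to Problem \ref{prob:fixed_forces} is then $u_X = \sum_i \beta_i G_{X_i}$, and substitution gives
\[
\mathcal{E}(u_X, X) \;=\; -\tfrac{1}{2}\|u_X\|_{a_\sigma}^2 \;=\; -\tfrac{1}{2}\sum_{i,j}\beta_i\beta_j\, G(X_i, X_j).
\]
Because the isometry group of $\Gamma$ (sphere or Clifford torus) acts transitively and preserves both $a_\sigma$ and $Ker(a)$, the diagonal $G(Y,Y) = \|G_Y\|_{a_\sigma}^2 =: C$ is a positive constant independent of $Y$. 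Cauchy--Schwarz gives $G(X_1, X_2) \le C$, with equality iff $G_{X_1} = G_{X_2}$ in $V$, iff $v(X_1) = v(X_2)$ for every $v \in V$; a bump-function argument, using that $Ker(a)$ is finite-dimensional and smooth, shows $V$ separates points on $\Gamma$, so the inequality is strict whenever $X_1 \neq X_2$.

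In the same-sign case ($\beta_1 > 0$ or $\beta_N < 0$), the triangle inequality in $(V, a_\sigma)$ yields $\|u_X\|_{a_\sigma} \le \sum_i |\beta_i|\sqrt{C} = |B|\sqrt{C}$ with $B := \sum_i \beta_i$, and hence $\mathcal{E}(u_X, X) \ge -\tfrac{1}{2}B^2 C$. Because all $\beta_i$ share a sign, equality in the triangle inequality requires all $G_{X_i}$ to coincide in $V$, which by point separation forces $X_1 = \cdots = X_N$. Conversely, every diagonal configuration attains the bound, and by homogeneity the one-force problem with $\tilde\beta_1 = 1$ attains its minimum $-\tfrac{1}{2}C$ at every $X_0 \in \Gamma$; scaling then identifies the two solution sets.

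For the mixed-sign case I would use alternating minimisation and a swap argument. Let $(u^\ast, X^\ast)$ be a minimiser. Fixing $X^\ast$ gives $u^\ast = u_{X^\ast}$; fixing $u^\ast$ and minimising $X \mapsto -\sum_i \beta_i u^\ast(X_i)$ forces each $X_i^\ast$ to be a global maximum of $u^\ast$ when $\beta_i > 0$ and a global minimum when $\beta_i < 0$. If $\beta_i, \beta_j > 0$ but $X_i^\ast \neq X_j^\ast$, replace $X_j^\ast$ by $X_i^\ast$ to produce $X'$; then $u_{X'} = u^\ast + \beta_j(G_{X_i^\ast} - G_{X_j^\ast})$ and, since $u^\ast(X_i^\ast) = u^\ast(X_j^\ast)$,
\[
\|u_{X'}\|_{a_\sigma}^2 - \|u^\ast\|_{a_\sigma}^2 \;=\; 2\beta_j^2\bigl(C - G(X_i^\ast, X_j^\ast)\bigr) \;>\; 0,
\]
so $\mathcal{E}(u_{X'}, X') < \mathcal{E}(u^\ast, X^\ast)$, contradicting optimality. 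The analogous swap handles any two negative-$\beta$ indices, so all positive-$\beta$ positions coincide at some $X^+$ and all negative-$\beta$ positions at $X^-$. Inserting $u^\ast = B_+ G_{X^+} + B_- G_{X^-}$ with $B_+ := \sum_{i \ge k+1}\beta_i$, $B_- := \sum_{i \le k}\beta_i$ into the reduced energy identifies $\mathcal{E}(u^\ast, X^\ast)$ with the two-force energy at $(X^-, X^+)$ with coefficients $(B_-, B_+)$, giving the reduction claimed.

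The principal technical step is the strict inequality $G(X_1, X_2) < C$ for $X_1 \neq X_2$, i.e.\ that $V$ separates points on $\Gamma$; this reduces to verifying that no point-difference functional $v \mapsto v(X_1) - v(X_2)$ can be represented as $v \mapsto (v, w)_{H^2(\Gamma)}$ for any $w$ in the finite-dimensional space $Ker(a)$, which is a concrete test-function computation on the sphere and Clifford torus using the explicit bases supplied by Lemma \ref{lem:Kera}.
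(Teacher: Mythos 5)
Your overall strategy coincides with the paper's: write $u_X=\sum_i\beta_i G_{X_i}$ via the reproducing kernel of $(V,a_\sigma)$ (the paper calls these $\phi_{X_i}$), reduce the energy to $-\tfrac12\sum_{i,j}\beta_i\beta_j a_\sigma(\phi_{X_i},\phi_{X_j})$, apply Cauchy--Schwarz in the same-sign case, and use a swap perturbation in the mixed-sign case. The identity $\mathcal E(u_Y,Y)=\mathcal E(u_Z,Z)-\sum_i\beta_i(u_Z(Y_i)-u_Z(Z_i))-\tfrac12 a_\sigma(u_Y-u_Z,u_Y-u_Z)$ that the paper uses is exactly what your alternating-minimisation swap is doing. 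Both arguments also tacitly rely on the same separation fact, namely that $\phi_x$ and $\phi_y$ are linearly independent for $x\neq y$, which you at least flag as the technical step to check.

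However, there is a genuine gap in your same-sign argument: the claim that ``the isometry group of $\Gamma$ (sphere or Clifford torus) acts transitively'' and hence $G(Y,Y)=:C$ is constant is false for the Clifford torus. The Euclidean isometry group of $T(R,R\sqrt2)\subset\mathbb R^3$ is $O(2)\times\mathbb Z_2$ (rotations about the axis and reflections), which is far from transitive; indeed the mean curvature $H$ of the embedded Clifford torus varies between the inner and outer equators, so the bilinear form $a=W''(\Gamma_0)$ is genuinely anisotropic and there is no reason for the kernel diagonal to be constant. Your triangle-inequality bound $\|u_X\|\le\sum_i|\beta_i|\sqrt C$, the assertion that equality forces all $G_{X_i}$ to coincide, and especially the claim that every $X_0\in\Gamma$ solves the one-force problem, all fail on the torus. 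The paper sidesteps this entirely: it never asserts constancy, only that the single-force minimiser $X_0$ \emph{maximises} $a_\sigma(\phi_y,\phi_y)$ by definition, and then uses
\[
a_\sigma(\phi_{X_i},\phi_{X_j})\le a_\sigma(\phi_{X_i},\phi_{X_i})^{1/2}a_\sigma(\phi_{X_j},\phi_{X_j})^{1/2}\le a_\sigma(\phi_{X_0},\phi_{X_0}),
\]
chasing equality through both inequalities. You should replace the symmetry argument with this maximality argument, after which your proof plan goes through on both surfaces. (The expression $2\beta_j^2(C-G(X_i^\ast,X_j^\ast))$ in the mixed-sign step should likewise be $\beta_j^2\|G_{X_i^\ast}-G_{X_j^\ast}\|_{a_\sigma}^2$, which is strictly positive for $X_i^\ast\neq X_j^\ast$ without appealing to any constant diagonal; this is cosmetic once the above repair is made.)
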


\begin{proof}
For $y \in \Gamma$, let $\phi_y$ denote the solution to Problem \ref{prob:fixed_forces} with $N=1$, $X=(y)$ and $\beta_1=1$. By linearity it follows, for any $X \in \Gamma^N$,
\[
u_X = \sum_{i=1}^N \beta_j \phi_{X_i}.
\]
To prove the first statement, suppose $X_0 \in \Gamma$ is a solution to Problem \ref{prob:varying_forces} with parameters $\tilde{N}=1$ and $\tilde{\beta_1}=1$. Note that we have assumed $sign(\beta_1)=...=sign(\beta_N)$, hence for any $X \in \Gamma^N$,
\begin{align*}
\mathcal{E}(u_X,X) &= -\frac{1}{2}\sum_{i,j=1}^N\beta_i\beta_j a_\sigma(\phi_{X_i},\phi_{X_j}) \\
&\geq -\frac{1}{2}\sum_{i,j=1}^N\beta_i\beta_j a_\sigma(\phi_{X_i},\phi_{X_i})^{1/2}a_\sigma(\phi_{X_j},\phi_{X_j})^{1/2} \\
&\geq -\frac{1}{2}\sum_{i,j=1}^N\beta_i\beta_j a_\sigma(\phi_{X_0},\phi_{X_0}) = \mathcal{E}(u_{\tilde{X}},\tilde{X})
\end{align*}
where $\tilde{X}=(X_0,...,X_0)$. The first inequality used is the Cauchy Schwarz inequality and the second inequality follows from the definition of $X_0$. As these inequalities hold for any $X \in \Gamma^N$ we have proven the backwards implication.

Now suppose $X \in \Gamma^N$ solves Problem \ref{prob:varying_forces}, then in addition to the inequalities derived above it holds
\[
\mathcal{E}(u_{\tilde{X}},\tilde{X}) \geq \mathcal{E}(u_X,X),
\]  
hence equality holds at each step. Then, as we have equality in the Cauchy Schwarz inequalities used, $\phi_{X_i}$ and $\phi_{X_j}$ are linearly dependent for each $i,j$. It follows $X_1=X_2=...=X_N$ and for any $y \in \Gamma$, set $Y=(y,y,...,y) \in \Gamma^N$ then
\[
-\frac{1}{2} \sum_{i,j=1}^N \beta_i \beta_j a_\sigma(\phi_y,\phi_y)=\mathcal{E}(u_Y,Y) \geq \mathcal{E}(u_X,X)= -\frac{1}{2}\sum_{i,j=1}^N\beta_i\beta_j a_\sigma(\phi_{X_1},\phi_{X_1}). 
\]  
Hence $X_1 \in \Gamma$ is a solution to Problem \ref{prob:varying_forces} with parameters $\tilde{N}=1$ and $\tilde{\beta_1}=1$ and we have proven the forwards implication. 

For the second statement observe, for any $Y,Z \in \Gamma^N$,
\begin{align*}
\mathcal{E}(u_Y,Y) &= \mathcal{E}(u_Z,Z) - a_\sigma(u_Y-u_Z,u_Z) -\frac{1}{2}a_\sigma(u_Y-u_Z,u_Y-u_Z), \\ 
&= \mathcal{E}(u_Z,Z) - \sum_{i=1}^N \beta_i(u_Z(Y_i)-u_Z(Z_i)) -\frac{1}{2}a_\sigma(u_Y-u_Z,u_Y-u_Z). 
\end{align*}
Now suppose $X \in \Gamma^N$ solves Problem \ref{prob:varying_forces} and there exists $1\leq i <j \leq k$ such that $X_i \neq X_j$. Without loss of generality assume $u_X(X_j) \leq u_X(X_i)$. Let $X' \in \Gamma^N$ be given by $X'_l = X_l$ for $l \neq i$ and $X'_i=X_j$.  
Then using the above calculation we obtain 
\begin{equation} \label{eq:pointMovingDecreaseEnergy}
\mathcal{E}(u_{X'},X') = \mathcal{E}(u_X,X) -\beta_i (u_X(X_j)-u_X(X_i)) -\frac{1}{2}a_\sigma(u_{X'}-u_X,u_{X'}-u_X).
\end{equation}
As $X \neq X'$ it follows $u_{X} \neq u_{X'}$ and hence $a_\sigma(u_{X'}-u_X,u_{X'}-u_X) > 0$. Thus
\[
\mathcal{E}(u_{X'},X') < \mathcal{E}(u_X,X)
\]
which is a contradiction, hence $X_1=X_2=...=X_k=:X^-$. An identical argument shows $X_{k+1}=X_{k+2}=...=X_N=:X^+$. It follows
\[
u_X = \sum_{l=1}^k \beta_l \phi_{X^-} + \sum_{l=k+1}^N \beta_l \phi_{X^+}.
\] 
Now for any $(Y^+,Y^-) \in \Gamma^2$ let $Y \in \Gamma^N$ be such that $Y_l=Y^-$ for $1\leq l \leq k$ and $Y_l=Y^+$ for $k+1\leq l \leq N$. Then $\mathcal{E}(u_X,X) \leq \mathcal{E}(u_Y,Y)$ and using the above expression for $u_X$ we deduce that $(X^+,X^-)$ solves Problem \ref{prob:varying_forces} with parameters $\tilde{N}=2$ and $\tilde{\beta} = \left(\sum_{i=1}^k \beta_i, \sum_{i=k+1}^N \beta_i \right)$.

For the reverse implication in the second statement, suppose $Y \in \Gamma^N$. Using the same technique as in \eqref{eq:pointMovingDecreaseEnergy} we may form $Y' \in \Gamma^N$ such that $Y'_1=...=Y'_k$,  $Y'_{k+1}=...=Y'_N$ and $\mathcal{E}(u_{Y'},Y') \leq \mathcal{E}(u_Y,Y)$. Then, using the fact that $(X^+,X^-)$ solves the $N=2$ problem,
\[
\mathcal{E}(u_{X},X) \leq \mathcal{E}(u_{Y'},Y') \leq \mathcal{E}(u_Y,Y).
\]  
Hence $X$ solves Problem \ref{prob:varying_forces}.
\end{proof} 

Note there is no uniqueness for this problem in general. Indeed, for the problem on a sphere with $N=1$, every $X \in S(0,R)$ solves Problem \ref{prob:varying_forces} due to rotational symmetry. 

\subsection{Point value constraints}
We will now consider filaments which fix the location of the membrane at a point. To do so we consider the following perturbed energy functional, for a general surface $\tilde{\Gamma}$.
\[
\mathcal{W}_\delta (\tilde{\Gamma}):=\mathcal{W}(\tilde{\Gamma}) + \frac{1}{2\delta} \sum_{i=1}^N d(\tilde{\Gamma},y_i)^2 
\]
Here $\delta > 0$ is a small penalty parameter, $d$ denotes the signed distance to the surface $\tilde{\Gamma}$ and $y_1,...,y_N \in \mathbb{R}^3$ are the locations fixed by the filaments. Similarly to the point forces, we assume the additional term is a small perturbation to the Willmore functional so that the resulting deformed surface may be expressed in the form $\Gamma_\varepsilon$, a graph over the undeformed surface $\Gamma$, which is chosen to be a sphere or Clifford torus here. In this case the small perturbation assumption is justified when the locations $y_i$ can be expressed in the form
\begin{equation} \label{eq:smallPeturb_yi}
y_i = X_i + \varepsilon \alpha_i\nu(X_i),
\end{equation}  
for some $X \in \Gamma^N$ and $\alpha_i \in \mathbb{R}^N$. As $\nu$ is determined by the choice of undeformed surface $\Gamma$ and of unit length, it is equivalent to see this as a penalty method for $u$, the height function. Note that we could also formulate a similar problem without the penalty method by applying point constraints to the displacement $u$.  

The first problem we consider is for fixed locations. We begin by stating the general, non-linear problem which we aim to approximate.  

\begin{problem}[Point value constraints for $W$]\ \\ \label{prob:nonlinearCon}
Given $X \in \Gamma^N$ and $\alpha \in \mathbb{R}^N$ and $\delta>0$, find $u \in H^2(\Gamma)$ minimising $\mathcal{W}_\delta(\Gamma_\varepsilon(u))$.
\end{problem}

That is we wish to find the surface of the form $\Gamma_\varepsilon(u)= \left\{ p + \varepsilon (u\nu)(p) ~|~ p \in \Gamma \right\}$ which minimises the penalised Willmore energy $\mathcal{W}_\delta$. For a surface of the form $\Gamma_\varepsilon$, with the small perturbation assumption \eqref{eq:smallPeturb_yi}, this energy reads
\[
\mathcal{W}_\delta (\Gamma_\varepsilon) = \mathcal{W}(\Gamma_\varepsilon) + \frac{1}{2\delta} \sum_{i=1}^N d(\Gamma_\varepsilon, X_i + \varepsilon \alpha_i \nu(X_i) )^2.
\]   
We will use $(\varepsilon^2 /2)a_\sigma(\cdot,\cdot)$, the previously defined second order approximation to $\mathcal{W}(\Gamma_\varepsilon)$. We require a similar approximation of the distance function term. First notice
\[
d(\Gamma_\varepsilon,X_i + \varepsilon \alpha_i \nu(X_i)) \big|_{\varepsilon = 0} = d(\Gamma,X_i) = 0.
\]
Secondly, the first derivative with respect to $\varepsilon$ is given by
\begin{align*}
\frac{d}{d\varepsilon} d(\Gamma_\varepsilon, X_i + \varepsilon \alpha_i \nu(X_i) ) \bigg|_{\varepsilon=0} &= \nabla d(\Gamma,X_i) \cdot \frac{d}{d\varepsilon} (X_i+\varepsilon \alpha_i \nu (X_i)) \bigg|_{\varepsilon=0} + \dot{\partial}d(\Gamma_\varepsilon, X_i), \\
&= \alpha_i - u(X_i).
\end{align*}
The final line holds as $\nabla d(\Gamma,X_i) = \nu(X_i)$ and $\dot{\partial}d(\Gamma_\varepsilon, X_i) = -u(X_i)$, see \cite{HinRin04}. It follows, expanding in $\varepsilon$ as previously,
\[
d(\Gamma_\varepsilon, X_i + \varepsilon \alpha_i \nu(X_i) )^2 = \varepsilon^2 (u(X_i)-\alpha_i)^2  
+ O(\varepsilon^3).
\]
We thus minimise the penalised quadratic energy functional
\[
J_\delta (u):= \frac{1}{2}a_\sigma(u,u) + \frac{1}{2\delta} \sum_{i=1}^N (u(X_i)-\alpha_i)^2.
\]
When $\sigma > 0$ this functional is minimised subject to the constraint $\int_\Gamma u \;do= 0$, which is the linearised form of the fixed volume constraint.

Similarly to the point forces problem this minimisation is not well posed over $H^2(\Gamma)$ and we must identify an appropriate subspace over which the problem is well posed. In order to identify an appropriate subspace we first introduce the following notation for affine subspaces.
\begin{definition}
Suppose $Z \subset H^2(\Gamma)$ is a linear subspace, $X \in \Gamma^N$ and $\gamma \in \mathbb{R}^N$. We denote by $Z_{X,\gamma}$ the affine space given as follows.
\[
Z_{X,\gamma} := \left\{ z \in Z ~|~ z(X_i)=\gamma_i \;\forall i=1,...,N  \right\} 
\]
\end{definition}  
For the linearised problem we consider the space $U_\sigma \subset H^2(\Gamma)$ given by
\[
U_\sigma:= \begin{cases} \left( H^2_{X,0}(\Gamma) \cap Ker(a) \right)^\perp &\mbox{if } \sigma = 0, \\
\left\{ u \in \left( H^2_{X,0}(\Gamma) \cap Ker(a) \right)^\perp ~\big|~ \int_\Gamma u \;do= 0 \right\} & \mbox{if } \sigma > 0. \end{cases}
\]
Here $\perp$ again means orthogonality with respect to the $H^2$ inner product. In the $\sigma=0$ case, the space $U_\sigma$ is the largest subspace of $H^2(\Gamma)$ over which well-posedness is possible. If we used a larger subspace $Z \supset U_\sigma$ then there exist elements $0 \neq v_0 \in Z \cap \left( H^2_{X,0}(\Gamma) \cap Ker(a) \right) $. For such elements $J_\delta(u+v_0)=J_\delta(u)$ hence no uniqueness is possible. A similar argument shows $U_\sigma$ is the largest subspace of $\left\{ u \in H^2(\Gamma) ~|~ \int_\Gamma u \;do = 0 \right\}$ over which well-posedness is possible in the $\sigma > 0$ case. 
\begin{remark} \label{rem:constraintsGenericCase}
Notice that $Ker(a)$ is a finite dimensional space and $u \in H^2_{X,0}(\Gamma)$ satisfies $N$ conditions of the form $u(X_i)=0$. The generic case for $N > dim(Ker(a))$ is thus $H^2_{X,0}(\Gamma) \cap Ker(a) = \left\{ 0 \right\}$ and $U_\sigma=H^2(\Gamma)$ if $\sigma = 0$ and $U_\sigma=\left\{ u \in H^2(\Gamma) ~|~ \int_\Gamma u \;do = 0 \right\}$ if $\sigma > 0$.  
\end{remark}

We now state the quadratic minimisation problem that will be studied.

\begin{problem} \label{pointConPenMethod}
Given $X \in \Gamma^N$, $\alpha \in \mathbb{R}^N$ and $\delta > 0$ find $u_\delta \in U_\sigma$ minimising $J_\delta$ over $U_\sigma$.
\end{problem}
\begin{proposition} \label{wellPosed_pointConPenMethod}
For each $\delta>0$ there exists a unique solution $u_\delta$ to Problem \ref{pointConPenMethod}.
\end{proposition}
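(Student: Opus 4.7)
The plan is to recast Problem \ref{pointConPenMethod} as a coercive variational equation on the Hilbert space $U_\sigma$ and to apply the Lax-Milgram theorem. Expanding the square in $J_\delta$ shows that $u_\delta \in U_\sigma$ minimises $J_\delta$ if and only if
\[
b_\delta(u_\delta,v) = \ell(v) \qquad \forall v \in U_\sigma,
\]
where $b_\delta(u,v) := a_\sigma(u,v) + \frac{1}{\delta}\sum_{i=1}^N u(X_i)v(X_i)$ and $\ell(v) := \frac{1}{\delta}\sum_{i=1}^N \alpha_i v(X_i)$. Because $\Gamma$ is a smooth compact two-dimensional hypersurface, the Sobolev embedding $H^2(\Gamma) \hookrightarrow C^0(\Gamma)$ holds, so each point evaluation $v \mapsto v(X_i)$ is a bounded linear functional on $H^2(\Gamma)$. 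Combined with the boundedness of $a_\sigma$ this yields continuity of $b_\delta$ and $\ell$ on $U_\sigma$, and strict convexity of $J_\delta$ (once coercivity is in hand) makes the variational equation equivalent to the minimisation.

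The real work is to verify coercivity of $b_\delta$ on $U_\sigma$, since $a_\sigma$ alone is coercive only on the strictly smaller space $Ker(a)^\perp$. I would handle this by decomposing any $u \in U_\sigma$ as $u = v + w$ via the $H^2$-orthogonal splitting $H^2(\Gamma) = Ker(a) \oplus Ker(a)^\perp$, so that $v \in Ker(a)$ and $w \in Ker(a)^\perp$. Setting $K_0 := H^2_{X,0}(\Gamma) \cap Ker(a)$ and letting $K_1$ denote its $H^2$-orthogonal complement inside the finite-dimensional space $Ker(a)$, the condition $u \in K_0^\perp$ together with $w \perp Ker(a)$ forces $v \in K_1$. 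The crucial observation is then that any $v \in K_1$ vanishing at every $X_i$ would belong to $K_0 \cap K_1 = \{0\}$, so the evaluation map $v \mapsto (v(X_1),\dots,v(X_N))$ is injective on the finite-dimensional space $K_1$, whence there exists $C_K > 0$ with
\[
\|v\|_{H^2(\Gamma)}^2 \leq C_K \sum_{i=1}^N v(X_i)^2 \qquad \forall v \in K_1.
\]

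It then remains to combine three ingredients: coercivity of $a_\sigma$ on $Ker(a)^\perp$ (from Lemma \ref{lem:Kera} when $\sigma = 0$ and from the analogous statement after \eqref{eq:a_sigmaDefn} when $\sigma > 0$), giving $a_\sigma(w,w) \geq C_a \|w\|_{H^2(\Gamma)}^2$; the vanishing of $a_\sigma(v,\cdot)$ against $Ker(a)^\perp$-elements (immediate for $\sigma = 0$ and, for $\sigma > 0$, using the mean-zero constraint built into $U_\sigma$ to neutralise the $1$-direction of $Ker(a)$), so that $a_\sigma(u,u) = a_\sigma(w,w)$; and the point-evaluation bound $\sum_i v(X_i)^2 \leq 2\sum_i u(X_i)^2 + 2NC_S^2 \|w\|_{H^2(\Gamma)}^2$ coming from $v = u-w$ together with the Sobolev embedding. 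Inserting these into the previous display and using $\|u\|_{H^2(\Gamma)}^2 = \|v\|_{H^2(\Gamma)}^2 + \|w\|_{H^2(\Gamma)}^2$ produces an estimate of the form $\|u\|_{H^2(\Gamma)}^2 \leq C(\delta,X,\Gamma)\, b_\delta(u,u)$, and Lax-Milgram then delivers a unique $u_\delta \in U_\sigma$.

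The substantive step is therefore the coercivity argument, and the conceptual content is that $U_\sigma$ has been tailored so that the penalty term $\delta^{-1}\sum_i u(X_i)^2$ exactly controls the finite-dimensional $Ker(a)$-component of $u$ that $a_\sigma$ cannot see; in the generic situation of Remark \ref{rem:constraintsGenericCase} the space $K_0$ is trivial, $K_1 = Ker(a)$, and all of $H^2(\Gamma)$ (or its mean-zero subspace when $\sigma > 0$) is available.
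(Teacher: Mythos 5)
Your proof is correct, but it takes a genuinely different route from the paper's. The paper argues by contradiction: it assumes $a_\sigma^\delta$ (your $b_\delta$) is not coercive over $U_\sigma$, extracts a normalised sequence $u_n$ with $a_\sigma^\delta(u_n,u_n)\to 0$, passes to a weakly convergent subsequence, shows the weak limit vanishes by identifying $U_\sigma\cap Ker(a_\sigma^\delta)=\{0\}$, and then upgrades to strong convergence using coercivity of $a_\sigma$ on $Ker(a)^\perp$ for the $Ker(a)^\perp$-component and finite-dimensionality for the $Ker(a)$-component — contradicting $\|u_n\|_{H^2}=1$. You instead give a \emph{direct} coercivity estimate by splitting $u=v+w$ with $w\in Ker(a)^\perp$, $v\in K_1:=Ker(a)\cap K_0^\perp$, and observing that the point-evaluation map is injective on the finite-dimensional $K_1$ (any $v\in K_1$ vanishing at all $X_i$ lies in $K_0\cap K_1=\{0\}$), so $\|v\|_{H^2}^2\le C_K\sum_i v(X_i)^2$. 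Combined with coercivity of $a_\sigma$ on $Ker(a)^\perp$, the Sobolev bound on $w(X_i)$, and Pythagoras, this yields an explicit coercivity constant for $b_\delta$. Your approach is more constructive and quantitative; the paper's compactness argument is shorter and avoids tracking the decomposition, at the cost of being non-constructive. Both are valid.

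One small point of imprecision: you attribute the need for the mean-zero constraint in the $\sigma>0$ case to the vanishing of $a_\sigma(v,\cdot)$ against $Ker(a)^\perp$-elements. In fact $a_\sigma(1,w)= -\tfrac{2\sigma}{R^2}\int_\Gamma w\,do=0$ holds automatically for any $w\in Ker(a)^\perp$. What actually goes wrong without the mean-zero constraint is the diagonal term: $a_\sigma(1,1)=-\tfrac{2\sigma}{R^2}|\Gamma_0|<0$, so $a_\sigma(v,v)$ need not vanish and the identity $a_\sigma(u,u)=a_\sigma(w,w)$ would fail (indeed $a_\sigma$ would fail to be positive semi-definite on $U_\sigma$). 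Removing the constant direction via the mean-zero constraint forces $v\in sp\{\nu_1,\nu_2,\nu_3\}$, on which $a_\sigma(v,\cdot)\equiv 0$, and then everything you wrote goes through. Worth stating that reason explicitly.
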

\begin{proof}
Define a bilinear form $a_\sigma^\delta:U_\sigma\times U_\sigma \rightarrow \mathbb{R}$ by
\[
a_\sigma^\delta (u,v):= a_\sigma(u,v) + \frac{1}{\delta}\sum_{i=1}^N u(X_i)v(X_i).
\]  
Notice $a_\sigma^\delta$ is bounded, symmetric and positive semi-definite, hence weak lower semi-continuous. In fact it is also coercive, we prove this by contradiction. Assume $a_\sigma^\delta$ is not coercive over $U_\sigma$, then there exists a sequence $u_n \in U_\sigma$ such that
\[
\|u_n\|_{H^2(\Gamma)} = 1 \text{ and } a_\sigma^\delta(u_n,u_n) < \frac{1}{n} \text{ for all } n \geq 1.
\]
Then we may find a subsequence $u_{n'} \rightharpoonup u$ for some $u \in U_\sigma$, it follows
\[
0 \leq a_\sigma^\delta(u,u) \leq \lim_{n' \rightarrow \infty} a_\sigma^\delta(u_{n'},u_{n'}) = 0.
\]
Thus $u \in U_\sigma \cap Ker(a_\sigma^\delta) = U_\sigma \cap (H^2_{X,0}(\Gamma) \cap Ker(a))=\left\{0\right\}$. Now for each $n'$, $u_{n'}$ may be expressed uniquely in the form
\[
u_{n'} = p_{n'} + q_{n'} \text{ with } p_{n'} \in Ker(a)^\perp \text{ and } q_{n'} \in Ker(a).
\]
The bilinear form $a_\sigma$ is coercive over $Ker(a)^\perp$, hence
\[
C\|p_{n'}\|_{H^2(\Gamma)}^2 \leq a_\sigma(p_{n'},p_{n'}) = a_\sigma(u_{n'},u_{n'}) \leq a_\sigma^\delta(u_{n'},u_{n'}) \rightarrow 0,
\]
thus $p_{n'}\rightarrow 0$. It follows that $q_{n'}\rightharpoonup 0$ and thus $q_{n'} \rightarrow 0$ as $Ker(a)$ is finite dimensional. We have reached a contradiction as now it holds that 
\[
1 = \|u_{n'}\|_{H^2(\Gamma)}^2 = \|p_{n'}\|_{H^2(\Gamma)}^2 + \|q_{n'}\|_{H^2(\Gamma)}^2 \rightarrow 0.  
\]
Hence there exists $\gamma > 0$ such that 
\[
\gamma \|u\|_{H^2(\Gamma)}^2 \leq a_\sigma^\delta(u,u) \quad \forall u \in U_\sigma. 
\]
Now write $J_\delta$ in the form
\[
J_\delta(u) = \frac{1}{2}a_\sigma^\delta(u,u) - \frac{1}{\delta}\sum_{i=1}^N \alpha_i u(X_i) + \frac{1}{2\delta}\sum_{i=1}^N \alpha_i^2.
\]
The existence of a unique solution to Problem \ref{pointConPenMethod} is then a consequence of the Lax-Milgram theorem.
\end{proof}

\noindent We now state the limit problem which solutions to Problem \ref{pointConPenMethod}, $u_\delta$, converge to as $\delta \downarrow 0$, first we introduce the notion of prescribed point values. 

Prescribed point values at distinct locations $X=(X_i) \in \Gamma^N$ will be represented by the constraints 
\begin{equation} \label{pointConstraints}
F_X(u)=\alpha
\end{equation}
with given $\alpha \in \mathbb{R}^N$ and $F_X$ defined by
\begin{equation}\label{deltaFunctional}
F_X(u) = (u(X_1),...,u(X_N))\in \mathbb{R}^N.
\end{equation}
Note that this is well defined as the map $u \mapsto u(X_i) \in H^2(\Gamma)'$ due to the continuous embedding $H^2(\Gamma) \subset C(\Gamma)$. With this concept of prescribed point values we can state the limit problem.

\begin{problem}[Point value constraints]\ \\ \label{prob:hcon}
Find $u \in (U_\sigma)_{X,\alpha}$ satisfying the following equivalent conditions.
\begin{enumerate}
\item $u \in (U_\sigma)_{X,\alpha}$ minimises $u \mapsto \frac{1}{2} a_\sigma(u,u)$ over $(U_\sigma)_{X,\alpha}$.
\item $u \in (U_\sigma)_{X,\alpha}$ is such that $a_\sigma(u,v)=0$ for all $v \in (U_\sigma)_{X,0}$.  
\end{enumerate}
\end{problem}

Existence and uniqueness of a solution to this problem follows from the fact that $a_\sigma(\cdot,\cdot)$ is coercive over $(U_\sigma)_{X,0}$, which is deduced from the observation $a_\sigma^\delta(v,v) = a_\sigma(v,v)$ for $v \in (U_\sigma)_{X,0}$, as $a_\sigma^\delta$ is coercive over $U_\sigma$, shown in the proof of Proposition \ref{wellPosed_pointConPenMethod}. 
We now show convergence for the penalty method.

\begin{proposition}
For $\delta > 0$ let $u_\delta$ denote the unique solution of Problem \ref{pointConPenMethod} and $u$ the unique solution of Problem \ref{prob:hcon}, then $u_\delta \rightarrow u$ in $H^2(\Gamma)$-norm as $\delta \downarrow 0$.
\end{proposition}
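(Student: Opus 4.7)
The plan is to use a classical penalty-method convergence argument, exploiting the orthogonal decomposition induced by $Ker(a)$ to compensate for the fact that $a_\sigma$ is coercive only on $Ker(a)^\perp$ rather than on all of $U_\sigma$.

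First I would establish a $\delta$-uniform bound on $\|u_\delta\|_{H^2(\Gamma)}$. Since $u$, the solution of the limit Problem \ref{prob:hcon}, lies in $(U_\sigma)_{X,\alpha} \subset U_\sigma$ and satisfies the point constraints exactly, it is admissible for Problem \ref{pointConPenMethod} with vanishing penalty contribution. Testing the inequality $J_\delta(u_\delta) \leq J_\delta(u) = \tfrac{1}{2}a_\sigma(u,u)$ immediately yields
\[
a_\sigma(u_\delta, u_\delta) \leq a_\sigma(u,u) \qquad \text{and} \qquad \sum_{i=1}^N (u_\delta(X_i) - \alpha_i)^2 \leq \delta\, a_\sigma(u,u),
\]
so in particular $F_X(u_\delta) \to \alpha$ in $\mathbb{R}^N$. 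To promote the $a_\sigma$-estimate into an $H^2$-estimate I decompose $u_\delta = p_\delta + q_\delta$ with $p_\delta \in Ker(a)^\perp$ and $q_\delta \in Ker(a)$. Lemma \ref{lem:Kera} controls $\|p_\delta\|_{H^2(\Gamma)}$ via $a_\sigma(p_\delta, p_\delta) = a_\sigma(u_\delta, u_\delta)$, and on the finite-dimensional space $Ker(a) \cap U_\sigma$ the point-evaluation map $q \mapsto F_X(q)$ is injective (its kernel would lie in $Ker(a) \cap H^2_{X,0}(\Gamma)$ while being $H^2$-orthogonal to it). Finite-dimensional equivalence of norms together with the trace embedding $H^2(\Gamma) \hookrightarrow C(\Gamma)$ then gives $\|q_\delta\|_{H^2(\Gamma)} \leq C(|F_X(u_\delta)| + |F_X(p_\delta)|)$, which stays bounded.

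Next I would extract a subsequence $u_{\delta_n} \rightharpoonup u^\star$ weakly in $H^2(\Gamma)$; closedness of the linear subspace $U_\sigma$ gives $u^\star \in U_\sigma$, and the compact embedding $H^2(\Gamma) \hookrightarrow C(\Gamma)$ lets me pass to the limit in the point evaluations to conclude $F_X(u^\star) = \alpha$, hence $u^\star \in (U_\sigma)_{X,\alpha}$. To identify $u^\star$ with $u$ I use weak lower semicontinuity of the bounded, symmetric, positive semi-definite form $a_\sigma$: for any competitor $v \in (U_\sigma)_{X,\alpha}$ we have $J_{\delta_n}(v) = \tfrac{1}{2}a_\sigma(v,v)$, and
\[
\tfrac{1}{2}a_\sigma(u^\star, u^\star) \leq \liminf_{n \to \infty} \tfrac{1}{2}a_\sigma(u_{\delta_n}, u_{\delta_n}) \leq \liminf_{n \to \infty} J_{\delta_n}(u_{\delta_n}) \leq \tfrac{1}{2}a_\sigma(v,v),
\]
so $u^\star$ minimises $\tfrac{1}{2}a_\sigma(\cdot,\cdot)$ over $(U_\sigma)_{X,\alpha}$. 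Uniqueness for Problem \ref{prob:hcon} then forces $u^\star = u$.

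For the upgrade to strong convergence I would take $v = u$ in the chain above to obtain $a_\sigma(u_{\delta_n}, u_{\delta_n}) \to a_\sigma(u,u)$. Writing $u_{\delta_n} = p_n + q_n$ and $u = p + q$ in the orthogonal decomposition, weak convergence splits as $p_n \rightharpoonup p$ and $q_n \rightharpoonup q$; since $q_n$ lies in the finite-dimensional space $Ker(a)$ we get $q_n \to q$ in $H^2(\Gamma)$ automatically, while $a_\sigma(p_n,p_n) = a_\sigma(u_{\delta_n},u_{\delta_n}) \to a_\sigma(p,p)$ because $a_\sigma$ kills $Ker(a)$. Since $a_\sigma$ is a genuine inner product on $Ker(a)^\perp$ equivalent to the $H^2(\Gamma)$ inner product there, weak convergence together with convergence of $a_\sigma$-norms forces $p_n \to p$ strongly, and summing the two pieces gives $u_{\delta_n} \to u$ in $H^2(\Gamma)$. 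A standard subsequence argument then upgrades this to convergence of the full family as $\delta \downarrow 0$. The step I expect to be most delicate is the a priori $H^2$-bound: because coercivity of $a_\sigma$ holds only on $Ker(a)^\perp$, one cannot simply read the bound off $a_\sigma(u_\delta, u_\delta)$, and the $Ker(a)$ component of $u_\delta$ has to be tamed via the penalised point values, which is precisely where the finite-dimensional injectivity of $F_X$ restricted to $Ker(a) \cap U_\sigma$ is needed.
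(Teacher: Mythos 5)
Your argument is correct, and it is essentially self-contained, whereas the paper does not give a proof of this proposition at all: it simply cites the abstract Hilbert-space result of Proposition 9.3 in \cite{EllGraHobKorWol15}. The key ingredient you rely on — the orthogonal decomposition $u_\delta = p_\delta + q_\delta$ with $p_\delta \in Ker(a)^\perp$ and $q_\delta \in Ker(a)\cap U_\sigma$, combined with coercivity of $a_\sigma$ on $Ker(a)^\perp$ and finite-dimensional injectivity of $F_X$ restricted to $Ker(a)\cap U_\sigma$ — is exactly the mechanism the paper already deploys in the proof of Proposition \ref{wellPosed_pointConPenMethod} to establish coercivity of $a_\sigma^\delta$, so your route fits naturally into the paper's framework. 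A minor alternative worth noting: one can obtain the $\delta$-uniform $H^2$ bound in one stroke by observing that $J_1(u_\delta)\leq J_\delta(u_\delta)\leq \tfrac12 a_\sigma(u,u)$ for $\delta\leq 1$ and invoking the coercivity of $a_\sigma^1$ already proved; this avoids bounding the two components separately, but your two-step decomposition is more explicit about where each piece of the kernel structure enters. Two small presentational points: the coercivity constant you invoke for $a_\sigma$ over $Ker(a)^\perp$ when $\sigma>0$ is not the statement of Lemma \ref{lem:Kera} (which treats $a$), but the unlabelled assertion following \eqref{eq:a_sigmaDefn}; and the identity $a_\sigma(p_\delta,p_\delta)=a_\sigma(u_\delta,u_\delta)$ silently uses that the $Ker(a)$-component $q_\delta$ of a mean-zero function is itself mean-zero (so the constant mode drops out), which is worth spelling out in the $\sigma>0$ case since the constant function does not lie in $Ker(a_\sigma)$ without the mean-zero restriction.
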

\noindent For a proof see \cite{EllGraHobKorWol15}, Proposition 9.3. 

A natural question to consider is minimising over the constraint locations $X$ as well as the displacement $u$, analogous to Problem \ref{prob:varying_forces}. The general theory in \cite[Proposition 9.4]{EllGraHobKorWol15}, is used there to establish a minimum for this type of problem, when the constraint locations $X$ are allowed to vary in the planar case. This theory cannot be applied here however as the underlying Hilbert space $U_\sigma$ depends upon the locations $X$.

\section{Numerical studies}  
\label{Section_numerics}

\subsection{Discretization}

In this section we present some preliminary illustrative numerical results concerning problems formulated in Section \ref{Section_point_forces} concerning the Willmore functional. Our numerical studies are performed using surface finite elements, \cite{DziEll13}.
The underlying partial differential equations are of fourth order. In order to avoid the use of $H^2$ conforming surface finite elements we use second order splitting to obtain two coupled second order surface equations which can be approximated by continuous  piece-wise linear surface finite elements on triangulated surfaces. The analysis of these schemes will be considered in a later work.

We now assume that the undeformed surface $\Gamma_0$ is approximated by a polyhedral hypersurface
$$
	\Gamma_h = \bigcup_{T \in \mathcal{T}_h} T,
$$ 
where $\mathcal{T}_h$ denotes the set of two-dimensional simplices in $\mathbb{R}^{3}$
which are supposed to form an admissible triangulation. Recall that our problems are posed on either a sphere or a Clifford torus. 
The approach is also applicable to similar PDEs on other closed surfaces. We assume that $\Gamma_h$ is
contained in a strip $\mathcal{N}_\delta$ of width $\delta > 0$ on which the
decomposition 
$$
	x = p + d(x) \nu(p), \quad p \in \Gamma 
$$ 
is unique for all $x \in \mathcal{N}_\delta$. Here, $d(x)$ denotes the oriented distance function to $\Gamma$,
see Section 2.2 in \cite{DecDziEll05}. This defines a map $x \mapsto p(x)$ from $\mathcal{N}_\delta$ onto 
$\Gamma$. We here assume that the restriction $p_{|\Gamma_h}$ 
of this map onto the polyhedral hypersurface $\Gamma_h$
is a bijective map between $\Gamma_h$ and $\Gamma$. In addition, the vertices of the simplices 
$T \in \mathcal{T}_h$ are supposed to sit on $\Gamma$. The generation of these triangulations for the sphere and torus is rather standard, see for example \cite{DziEll13}. In Figure \ref{fig:exampleMeshes} we show typical triangulations.

\begin{figure}
\centering
\begin{subfigure}[b]{0.45\textwidth}
\includegraphics[width=\textwidth]{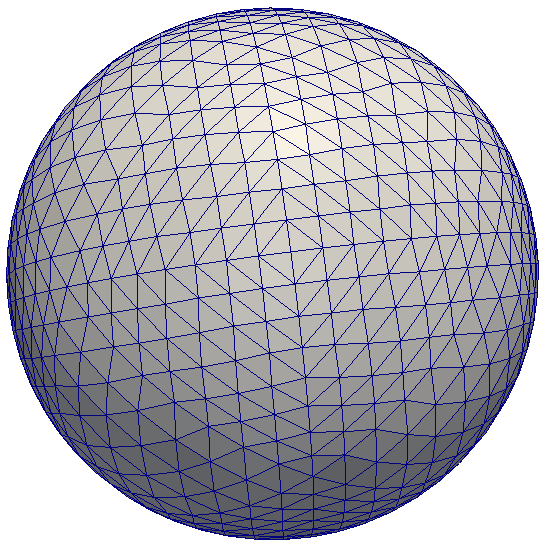} 
\caption{Example triangulation for a sphere} \label{fig:exampleMesh_sphere}
\end{subfigure}
\hspace{0.05\textwidth}
\begin{subfigure}[b]{0.45\textwidth}
\includegraphics[width=\textwidth]{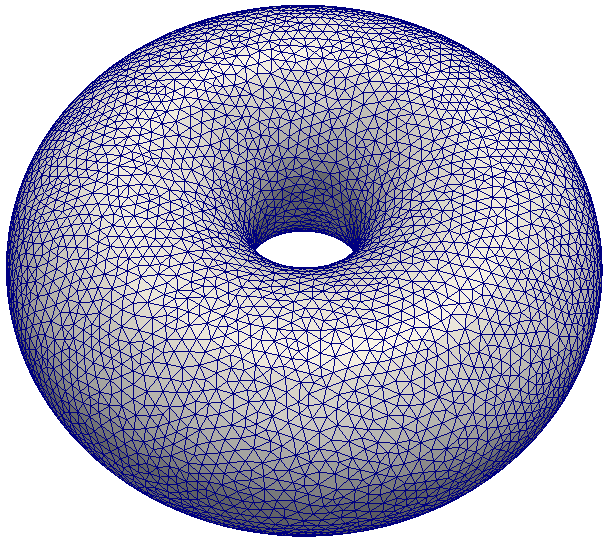} 
\caption{Example triangulation for a torus} \label{fig:exampleMesh_torus}
\end{subfigure}
\caption{Example triangulations of surfaces.}
\label{fig:exampleMeshes}
\end{figure}

The piecewise linear Lagrange finite element space on $\Gamma_h$ is
$$
	\mathcal{S}_h := \left\{ \chi \in C(\Gamma_h) \;|\; \chi_{T} \in \mathbb{P}_1(T)\;
	\forall T \in \mathcal{T}_h \right\},
$$ 
where $\mathbb{P}_1(T)$ denotes the set of polynomials of degree $1$ on $T$.
The Lagrange basis functions $\varphi_i$ of this space are uniquely determined by their values
at the so-called Lagrange nodes $q_j$, that is $\varphi_i(q_j) = \delta_{ij}$. 
The associated Lagrange interpolations for a continuous function $f$ on $\Gamma_h$ are defined by
$$
	I_h^r f := \sum_{i} f(q_i) \varphi_i.
$$

\subsubsection{Point forces on a sphere}
We first look to solve Problem \ref{prob:fixed_forces} numerically, using a second order  splitting method and enforcing the constraint $u \in V= Ker(a)^\perp$ via the addition of a new bilinear form and an adjustment of the right hand side. For the numerical method we will express the condition $u \in V$ via $L^2(\Gamma)$-orthogonality. That is, for each basis function of $Ker(a)$, $f_i$, set 
\[
g_i:= (\DeltaG^2 - \DeltaG + 1)f_i \in C^\infty(\Gamma).
\]
We can then characterise $V$ in terms of $L^2(\Gamma)$-orthogonality with the $g_i$, for $1\leq i \leq M:=dim(Ker(a))$,
\[
V = \left\{ v \in H^2(\Gamma) ~|~ (v,g_i)_{L^2(\Gamma)}=0 ~\forall 1 \leq i \leq M \right\}.
\]
Using the basis functions given in Lemma \ref{lem:Kera}, for a sphere the $g_i$ are given by
\[
\left\{ 1, \left(\frac{4}{R^4} + \frac{2}{R^2} + 1 \right)\nu_i ~\bigg|~ i=1,2,3 \right\},
\] 
in our applications we will neglect the multiplicative constant appearing in front of the $\nu_i$.
For a Clifford torus the formulae are somewhat lengthier.
We may assume, without loss of generality, that the basis functions $g_i$ form an $L^2(\Gamma)$-orthogonal set.

In this section we will consider the case $\Gamma=S(0,R)$, a sphere of radius $R$, however the Clifford torus problem can be treated in a similar manner. To formulate the splitting method we first consider the Euler Lagrange equation associated to minimising the point forces energy in Problem \ref{prob:fixed_forces}. This energy minimisation is equivalent to the following variational problem.
\begin{problem}
Find $u \in V$ such that 
\[
a_\sigma(u,v) = \sum_{i=1}^N \beta_i v(X_i) \quad \forall v \in V.
\]
\end{problem} \label{prob:fixedForcesVariational}
This variational problem is posed over $V$, a subspace of $H^2(\Gamma)$. For ease of implementation, we wish to solve a problem posed over the full space $H^2(\Gamma)$. To formulate such a problem we introduce Lagrange multipliers, the resulting variational problem is shown below.
\begin{problem} \label{prob:fixedForcesLagrange}
Find $(u,\lambda) \in H^2(\Gamma) \times \mathbb{R}^4$ such that
\begin{align*}
a_\sigma(u,v) &= \sum_{k=1}^N \beta_k v(X_k) - \lambda_0 \int_\Gamma v \;do - \sum_{i=1}^3 \lambda_i\int_\Gamma v\nu_i \; do    \quad \forall v \in H^2(\Gamma), \\
\int_\Gamma u \;do &= \int_\Gamma u \nu_i \; do = 0 \quad \text{for } i=1,2,3.
\end{align*}
\end{problem} 
Here we can easily determine the Lagrange multipliers. Testing the first equation with a component of the normal, $\nu_i$, or the constant function $1$ we obtain zero on the left hand side, using $\int_\Gamma u \;do = 0$ for the latter case. The right hand side must likewise vanish, determining the Lagrange multipliers
\[
\lambda_0 = \frac{1}{4\pi R^2} \sum_{k=1}^N \beta_k \quad \text{and} \quad \lambda_i = \frac{3}{4\pi R^2}\sum_{k=1}^N \beta_k \nu_i(X_k) \quad \text{for} \quad i=1,2,3.
\]  
Now we have produced a problem for $u$ which is posed over the full space $H^2(\Gamma)$. We could now discretise and solve this problem using $H^2(\Gamma)$-conforming finite elements. This approach will not be taken here however, instead we will formulate an equivalent problem which can be solved using lower order finite elements. To do so we will split this fourth order problem into a system of two second order equations. Such a splitting is best motivated by considering the strong form of Problem \ref{prob:fixedForcesLagrange}, produced by integrating the variational problem by parts.
\begin{equation} \label{eq:sphereEulerLagrange}
\begin{split}
\kappa \DeltaG^2 u - \left(\sigma - \frac{2\kappa}{R^2} \right) \DeltaG u - \frac{2\sigma}{R^2}u &= \tilde{\delta}_X, \\
\int_\Gamma u \;do = \int_\Gamma u \nu_i  \; do &= 0 \quad \text{for } i=1,2,3.
\end{split}
\end{equation}
This equation is meant only in the sense of distributions, here $\tilde{\delta}_X$ denotes
\[
\tilde{\delta}_X := \sum_{k=1}^N \beta_k \left( \delta_{X_k} - \frac{1}{4\pi R^2} - \sum_{i=1}^3\frac{3}{4\pi R^2} \nu_i(X_k) \nu_i \right)
\]
where $\delta_{X_k}$ is the Dirac delta distribution $\delta_{X_k}: v \mapsto v(X_k)$. The PDE can also be given meaning in the sense that both sides lie in $H^2(\Gamma)'$, this leads to the variational problem above.

To solve \eqref{eq:sphereEulerLagrange} numerically we will use a splitting method to formulate this fourth order problem as a pair of second order equations. The splitting occurs by introducing the new variable $w$, which satisfies 
\[
w = -\DeltaG u - \frac{2}{R^2}u,
\]
it is immediate that $w$ must satisfy the constraints
\[
\int_\Gamma w \;do = \int_\Gamma w \nu_i \; do = 0 \quad \text{for } i=1,2,3.
\]
Using this splitting we are left with a decoupled system, given by
\begin{align*}
-\kappa\DeltaG w + \sigma w &= \tilde{\delta}_X \text{ with constraints } \int_\Gamma w \;do = \int_\Gamma w \nu_i \; do = 0 \quad \text{for } i=1,2,3, \\
-\DeltaG u - \frac{2}{R^2} u &= w \text{ with constraints } \int_\Gamma u \;do = \int_\Gamma u \nu_i \; do = 0 \quad \text{for } i=1,2,3.
\end{align*}
We wish to find a weak formulation for this decoupled system to base our numerical method around. In addition, the implementation of the numerical method is much more straightforward if we can make the constraints a property of the equations we solve. We thus make a modification to the decoupled system, the modified system reads
\begin{align*}
-\kappa\DeltaG w + \sigma w +\chi_\sigma \int_\Gamma w &= \tilde{\delta}_X, \\
-\DeltaG u -\frac{2}{R^2} u + \tau \sum_{i=1}^4 \left(\int_\Gamma ug_i \right) g_i &= w,
\end{align*}
where $\tau >(1/2\pi R^4)$ and $\chi_\sigma = 1$ if $\sigma =0$ and is zero otherwise. As $u,w \in V$ we have actually added zero to both equations but in doing so we ensure that any solution to this modified system must satisfy the required constraints. We now give an appropriate weak formulation which will be discretised to produce the finite element method.

\begin{problem} \label{prob:forcesSplittingMethod}
Fix $X \in \Gamma^N$, $p \in (1,2)$ and $q \in (2,\infty)$ such that $1/p +1/q =1$. Find $(u,w) \in H^1(\Gamma) \times W^{1,p}(\Gamma)$ such that 
\begin{align*}
\int_\Gamma \kappa \nablaG w \cdot \nablaG v +\sigma w v \;do + \chi_\sigma (w,1)_{L^2(\Gamma)}(v,1)_{L^2(\Gamma)}  &=  \langle \tilde{\delta}_X , v \rangle  \qquad \forall v \in W^{1,q}(\Gamma), \\
\int_\Gamma \nablaG u \cdot\nablaG v -\frac{2}{R^2} u v \;do +  \tau \sum_{i=1}^{4} (u,g_i)_{L^2(\Gamma)}(v,g_i)_{L^2(\Gamma)} &= \int_\Gamma w v \;do \quad \forall v \in H^1(\Gamma).
\end{align*}
\end{problem}

This formulation is well posed and is equivalent to Problem \ref{prob:fixed_forces} in the sense that the solution is given by $(u,-\DeltaG u -(2/R^2) u)$ where $u$ is the solution to Problem \ref{prob:fixed_forces}. This result and the numerical analysis of the method we now introduce will be the subject of a later paper.
We discretise the system using $P^1$ finite elements, the resulting equations for the discrete system are as follows.   

\begin{problem} \label{prob:discretePointForces}
Find $\mathbf{u},\mathbf{w} \in \mathbb{R}^{N_h}$ such that
\begin{align*}
\left(\kappa S_h  + \sigma M_h +\chi_\sigma A_h \right) \mathbf{w} &= F_h \\
\left(S_h -\frac{2}{R^2} M_h +\tau B_h \right) \mathbf{u} &= M_h \mathbf{w} 
\end{align*}
\end{problem}

\noindent Here $M_h$ and $S_h$ are the usual mass and stiffness matrices respectively,
\[
(M_h)_{ij} := \int_{\Gamma_h} \phi_i \phi_j \;do_h \quad \text{and} \quad (S_h)_{ij} := \int_{\Gamma_h} \nabla_{\Gamma_h} \phi_i \cdot \nabla_{\Gamma_h} \phi_j \;do_h. 
\]
The matrix $A_h$ is given by
\[
(A_h)_{ij}  := (\phi_i,1)_{L^2(\Gamma_h)}(\phi_j,1)_{L^2(\Gamma_h)}. 
\]
The matrix $B_h$ is given by
\[
(B_h)_{ij}  := (\phi_i,1)_{L^2(\Gamma_h)}(\phi_j,1)_{L^2(\Gamma_h)} + \sum_{k=1}^3  (\phi_i,\nu_k\circ p)_{L^2(\Gamma_h)}(\phi_j,\nu_k\circ p)_{L^2(\Gamma_h)}. 
\]
The right hand side in the first equation results from approximating the linear functional $\tilde{\delta}_X$ and is given by
\[
(F_h)_i := \sum_{k=1}^N \beta_k\left(\phi_i\circ p_{|\Gamma_h}^{-1}(X_k) -  \frac{1}{4\pi R^2}(\phi_i,1)_{L^2(\Gamma_h)} - \frac{3}{4\pi R^2}\sum_{r=1}^3 \nu_r(X_k) (\phi_i,\nu_r\circ p)_{L^2(\Gamma_h)} \right).
\]
The convergence of this finite element method will not be addressed here, instead we will introduce a similar method for the point constraints problem before producing some illustrative examples.

\subsubsection{Point constraints for a Clifford torus}
We also solve Problem \ref{pointConPenMethod} numerically, enforcing the constraint $u \in U= (H^2_{X,0} \cap Ker(a))^\perp$ via a penalty method. As in the previous algorithm we will express the condition $u \in U$ via $L^2(\Gamma)$-orthogonality. That is, for each basis function of $H^2_{X,0} \cap Ker(a)$, $f_i$, set 
\[
g_i:= (\DeltaG^2 - \DeltaG + 1)f_i \in C^\infty(\Gamma).
\]
We can then characterise $U$ in terms of $L^2(\Gamma)$-orthogonality with the $g_i$, for $1\leq i \leq L:=dim(H^2_{X,0} \cap Ker(a))$
\[
U = \left\{ v \in H^2(\Gamma) ~|~ (v,g_i)_{L^2(\Gamma)}=0 ~\forall 1 \leq i \leq L \right\}
\]
The resulting minimisation problem, which we will discretise, is given as follows.
\begin{problem}

Given $X \in \Gamma^N$ and $\delta,\rho >0$ find $u_{\delta,\rho} \in H^2(\Gamma)$ minimising $\mathcal{E}_{\delta,\rho}(\cdot,X)$ over $H^2(\Gamma)$, where
\[
\mathcal{E}_{\delta,\rho} (u,X):= \frac{1}{2}a(u,u) + \frac{1}{2\delta} \sum_{i=1}^N (u(X_i)-\alpha_i)^2 +\frac{1}{2\rho} \sum_{i=1}^{L} (u,g_i)_{L^2(\Gamma)}^2.
\]
\end{problem}

Existence and uniqueness of a solution is a consequence of the Lax-Milgram theorem and by standard techniques for penalty methods one can show $\|u_{\delta,\rho} -u_\delta \|_{H^2(\Gamma)} \rightarrow 0$ as $\rho \rightarrow 0$, where $u_\delta$ is the solution to Problem \ref{pointConPenMethod}. Here we will consider the case $\Gamma=T(1,\sqrt{2})$, a Clifford torus.

We discretise and solve the problem using the splitting $w=-\DeltaG u + u$. The resulting equations are as follows.
\begin{problem}
Find $\mathbf{u},\mathbf{w} \in \mathbb{R}^{N_h}$ such that
\begin{align*}
\left(
\begin{array}{cc}
T_h + \frac{1}{\delta}C_h + \frac{1}{\rho}B_h & S_h + M_h \\
S_h + M_h & -M_h
\end{array}
\right) 
\left(
\begin{array}{c}
\mathbf{u} \\
\mathbf{w}
\end{array}
\right) = 
\left( 
\begin{array}{c}
\tfrac{1}{\delta} \tilde{F}_h \\
0
\end{array}
\right)
\end{align*}
\end{problem}

\noindent Here $M_h$ and $S_h$ are the usual mass and stiffness matrices respectively. The matrix $T_h$ is induced by the inner product
\begin{align*}
	t(u, v)
	=&  \int_{\Gamma} \nabla_{\Gamma} u \cdot \left( \left[\frac{3}{2}H^2 - 2|\mathcal{H}|^2 -2 \right]\unit - 2 H \mathcal{H} \right) \nabla_{\Gamma} v 
	\\  
	 &+ uv \left( - \frac{3}{2} H^2 | \mathcal{H} |^2 + 2 ( \nabla_{\Gamma} \nabla_{\Gamma} H) : \mathcal{H} + |\nabla_{\Gamma} H|^2 + 2 H Tr(\mathcal{H}^3) +\DeltaG |\mathcal{H}|^2 +|\mathcal{H}|^4 -1\right) \; do.
\end{align*} 
This term in the equation results from the fact 
\[
a(u,v) = \int_\Gamma (-\DeltaG u + u)(-\DeltaG v + v) \;do + t(u,v),
\]
as when we carry out the splitting $w=-\DeltaG u + u$ this becomes
\[
a(u,v) = \int_\Gamma \nablaG w \cdot \nablaG v + wv \;do + t(u,v).
\] 
For the discretised equations this contributes $S_h + M_h$ to the upper right block of the system matrix and the remainder $T_h$ to the upper left block.    

The matrix $B_h$ results from the penalty terms for the elements of $Ker(a)$,
\[
(B_h)_{ij}  := \sum_{k=1}^L  (\phi^1_i,g_k\circ p)_{L^2(\Gamma_h)}(\phi^1_j,g_k\circ p)_{L^2(\Gamma_h)}. 
\]
The matrix $C_h$ results from the penalty terms for the point constraints,
\[
(C_h)_{ij}:= \sum_{k=1}^N \phi^1_i\circ p_{|\Gamma_h}^{-1}(X_k)\phi^1_j\circ p_{|\Gamma_h}^{-1}(X_k).
\]
The first block of right hand side vector is given by
\[
(\tilde{F}_h)_i := \sum_{k=1}^N \alpha_k\phi^1_i\circ p_{|\Gamma_h}^{-1}(X_k).
\]
The numerical analysis of these methods will be the subject of a later paper. 

\begin{remark}
The point constraints problem may also be approached by adjusting the linear functionals $u \mapsto u(X_k)$ as was done for point forces in \eqref{eq:sphereEulerLagrange}. Similarly the point forces problem may be approached by a penalty method by penalising each of the integrals $(u,g_i)_{L^2(\Gamma)}$.   
\end{remark}

\subsection{Numerical results}
\subsubsection{Point forces on a sphere}

As in the flat case \cite{EllGraHobKorWol15} we investigate the membrane mediated interactions between point forces. To do so we solve the discrete problem, Problem \ref{prob:discretePointForces}, with $R=1$, $N=2$, $X_1=(0,0,1)$ and and $X_2=(\sin(\theta),0,\cos(\theta))$, varying $\theta \in [0,\pi]$. We take $\beta_1=5$ and consider each of the cases $\beta = \pm 5$. As in the flat case, we fix $\kappa=1$ but use varying values for the surface tension $\sigma$ to explore how the ratio $\kappa/\sigma$ affects the interactions. 

Figure \ref{fig:Energy_Plot_same} plots the energy of the discrete solution as a function of $\theta$ for point forces with the same sign, $\beta_1=\beta_2=5$. At small separations we observe a similar attractive interaction as was observed in the flat case \cite{EllGraHobKorWol15}. This agrees with the attractive interaction between filopodia discussed in biophysics literature \cite{AtiWirSun06,IsaManKacYocGov13}. However there is a critical separation angle $\theta_c$ beyond which the interaction is repulsive. This repulsion at larger separations cannot be observed in the flat case as it occurs precisely when the membrane is no longer well approximated by a planar graph. The global minimum is at $\theta = 0$, corresponding to the two forces clustering to the same point as was observed in the flat case and proven by the general theory. There is also a local minimum at $\theta = \pi$, corresponding to the forces being located at opposite poles.

\begin{figure}[h!]
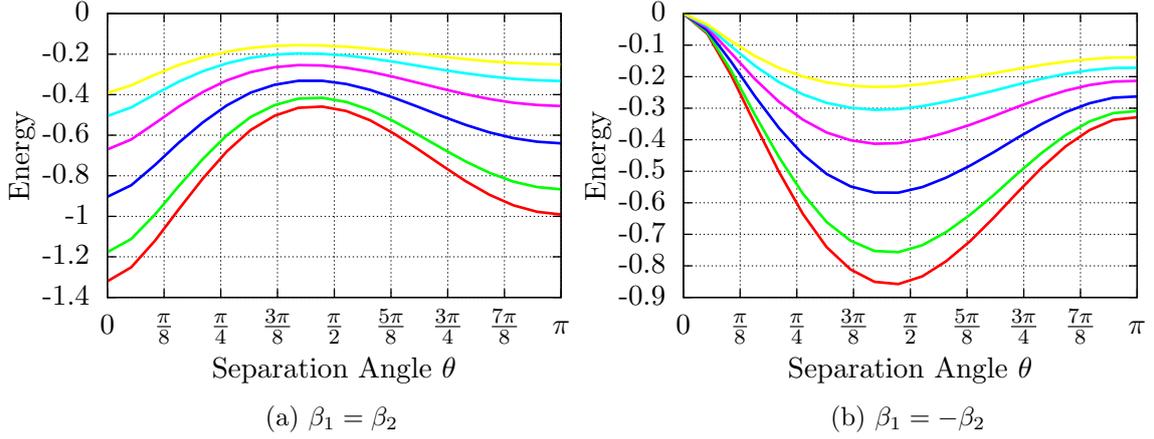

\centering
\hspace{1cm}
\begin{subfigure}[b]{0.45\textwidth}
\hspace{0.5cm}
\input{gnuplot_file-same.tex} 
\vspace{1cm}
\caption{$\beta_1=\beta_2$} \label{fig:Energy_Plot_same}
\end{subfigure}
\hspace{0.01\textwidth}
\begin{subfigure}[b]{0.45\textwidth}
\hspace{0.5cm}
\input{gnuplot_file-opp.tex} 
\vspace{1cm}
\caption{$\beta_1=-\beta_2$} \label{fig:Energy_Plot_opp}
\end{subfigure}
\caption{Energy plots for forces with identical and opposite orientations, varying $\sigma$ from $0$ to $25$ (bottom to top).}
\end{figure} 

Figure \ref{fig:Energy_Plot_opp} plots the energy of the discrete solution as a function of $\theta$ for point forces with the opposite sign, $\beta_1=-\beta_2=5$. At small separations we observe a similar repulsive interaction as was observed in the flat case. As for the previous example, the interaction changes at the critical angle $\theta_c$, in this case becoming attractive. This leads to the global minimum occurring at $\theta = \theta_c$.  

The existence of this critical angle and its dependence on $\sigma$ can be seen by studying $G_h$, the solution for $N=1$, $X_1=(0,0,1)$ and $\beta=1$. The $\theta$-dependent part of the discrete energy for the two examples above may be written as
\[
E_h(\theta) = -\beta_1\beta_2 G_h(X_2(\theta)).
\] 
Thus when $\beta_1$ and $\beta_2$ have the same sign, the energy is least when $G_h > 0$ and when they have opposite sign the energy is least when $G_h < 0$. Moreover the critical angle $\theta_c$ is precisely the angle which minimises $G_h(X(\theta))$. Figure \ref{fig:sol_sigma=0} plots $G_h$ for $\sigma=0$ and Figure \ref{fig:soln_sigma=25} for $\sigma=25$. The red regions are areas where $G_h$ is positive and the blue where it is negative. The values are plotted onto a surface representative of the deformed surface $G_h$ produces in each case. So that the deformations are visible, they have not been scaled by $\varepsilon$ for these plots.   Also overlayed on each figure is the line along which the minimum occurs, that is the line $\theta=\theta_c$. For $\sigma = 0$ we have $\theta_c \approx 83^\circ$ and for $\sigma =25$ we have $\theta_c \approx 77^\circ$. One observes that as $\sigma$ increases the effect of the force becomes more localised, shrinking the positive, red region and decreasing the value of $\theta_c$.    

\begin{figure}
\centering
\begin{subfigure}[b]{0.45\textwidth}
\includegraphics[width=\textwidth]{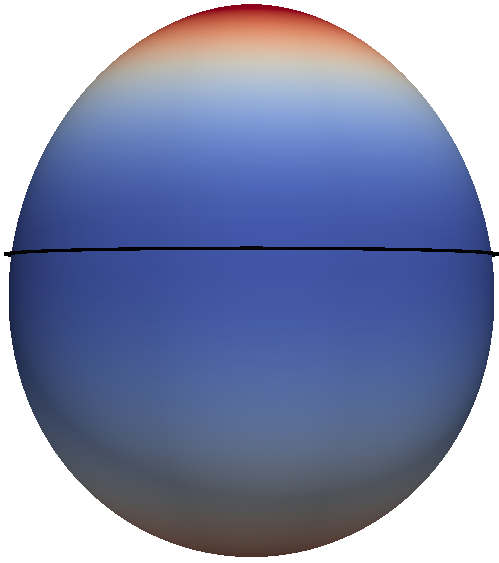} 
\caption{$\sigma=0$} \label{fig:sol_sigma=0}
\end{subfigure}
\hspace{0.05\textwidth}
\begin{subfigure}[b]{0.45\textwidth}
\includegraphics[width=\textwidth]{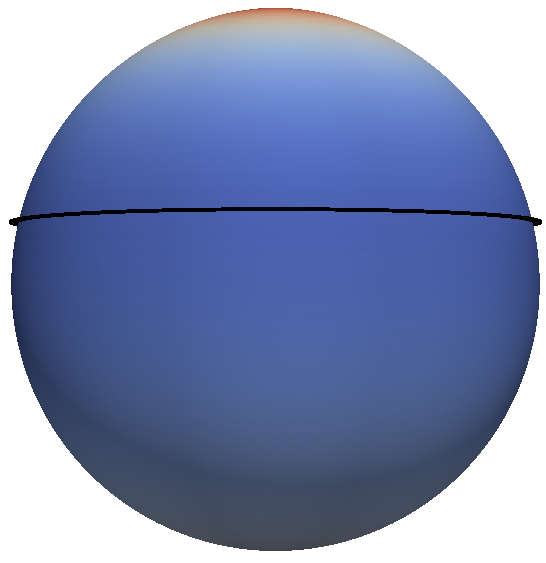} 
\caption{$\sigma=25$} \label{fig:soln_sigma=25}
\end{subfigure}
\caption{Plot of $G_h$ values on $\Gamma_h$ for varying $\sigma$.}
\end{figure}   

\subsubsection{Point constraints for a Clifford torus}
For the second algorithm we will simply provide some illustrative examples of numerical solutions.
Figure \ref{fig:example1_torus} shows the deformed surface produced when minimising the linearised energy under the point constraints $u(X_k)=\alpha_i$ for $k=1,2,3$ with 
\begin{align*}
X_k &= ((\sqrt{2} + 1)\cos((2+k)\pi/4), (\sqrt{2} + 1)\sin((2+k)\pi/4), 0 ), \\
\alpha &= (-0.5,1,-0.5).
\end{align*}
Figure \ref{fig:example2_torus} shows the deformed surface produced when minimising the linearised energy under the point constraints $u(X_k)=\alpha_i$ for $k=1,2,3$ with 
\begin{align*}
X_k &= (-(\sqrt{2} + \cos(2k\pi/3)), 0, \sin(2k\pi/3) ), \\
\alpha &= (-0.5,-0.5,1).
\end{align*}
In both cases there are deformations away from the point constraint locations. As for the point forces on a sphere, this will give rise to longer distance interactions that are not witnessed when the undeformed surface is planar. Note that the figures show the deformed surface $\Gamma_\varepsilon$, here we have chosen $\varepsilon=0.2$. In reality $\varepsilon$ is a much smaller parameter but using a relatively large value for $\varepsilon$ in these plots means the deformations are visible.   

\begin{figure}
\centering
\begin{subfigure}[b]{0.45\textwidth}
\includegraphics[width=\textwidth]{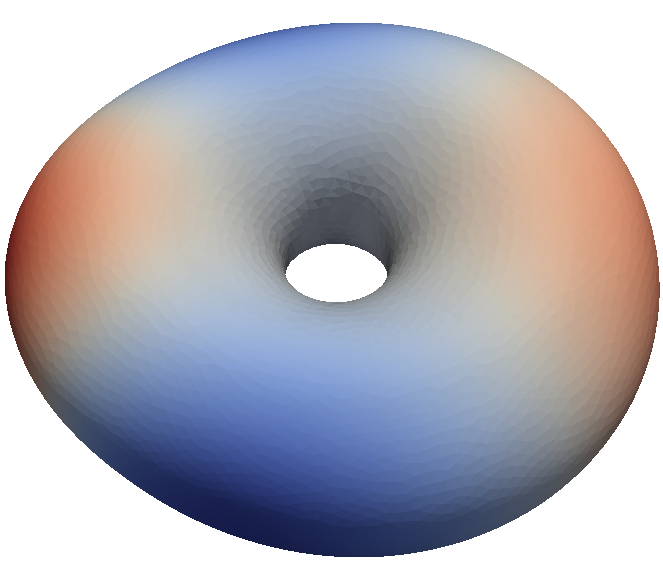} 
\caption{Constraints around outer circle} \label{fig:example1_torus}
\end{subfigure}
\hspace{0.05\textwidth}
\begin{subfigure}[b]{0.45\textwidth}
\includegraphics[width=\textwidth]{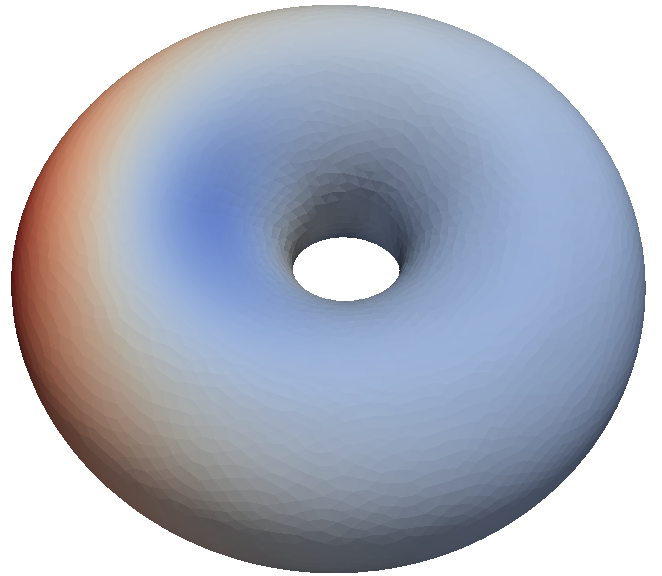} 
\caption{Constraints around inner cricle} \label{fig:example2_torus}
\end{subfigure}
\caption{Examples of deformed Clifford tori subject to point constraints.}
\end{figure}

\newpage
\renewcommand\thesection{\Alph{section}}
\setcounter{section}{0}

\section{Appendix}
\subsection*{Derivation of the second variation of the Willmore functional}
For the sake of completeness we present the derivation of the second variation, see also \cite{Gla11}.
We will not integrate by parts in the formulas below -- unless otherwise stated. 
This means that our results can be more readily adapted to surfaces with boundary.
This might be useful for studying biomembranes with finite-size inclusions. 
The following calculations are valid for $n$-dimensional hypersurfaces $\Gamma \subset \mathbb{R}^{n+1}$.
We begin by calculating the required material derivatives, starting with the unit normal.
\begin{lemma} \label{materialderiv_normal}
Suppose $\Gamma \subset \mathbb{R}^{n+1}$ is a parametrised $n$-dimensional hypersurface, $\Gamma = \left\{ X(\theta) \;|\; \theta \in \Omega \right\}$, let $u\in C^1(\Gamma)$ and define $\Gamma_\mu:=\left\{ X^\mu(\theta):=X(\theta) + \mu \tilde{u}(\theta)\tilde{\nu}(\theta) \;|\; \theta \in \Omega \right\}$ where $\tilde{u}(\theta):=u(X(\theta))$ and $\tilde{\nu}:=\nu(X(\theta))$. Then the material derivative of the normal is given by
\[
\dot{\partial}\nu^\mu = -\nabla_\Gamma u. 
\]
\end{lemma}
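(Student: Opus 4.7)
The plan is to characterise $\nu^\mu$ as the unit normal via the two defining conditions $|\nu^\mu|^2 = 1$ and $\nu^\mu \cdot \partial X^\mu/\partial \theta_i = 0$ for $i = 1, \ldots, n$, and then differentiate each of these identities with respect to $\mu$ at $\mu = 0$. The first identity will force the material derivative $\dot{\partial}\nu^\mu$ to be tangential to $\Gamma$, and the second will pin down its tangential components in terms of the derivatives of $u$.

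More concretely, I would first differentiate $|\nu^\mu|^2 = 1$ in $\mu$ and evaluate at $\mu = 0$ to obtain $\dot{\partial}\nu^\mu \cdot \nu = 0$, so that $\dot{\partial}\nu^\mu$ lies in the tangent space of $\Gamma$ and can be written as $\dot{\partial}\nu^\mu = c^j \partial X / \partial \theta_j$ for some coefficients $c^j$. Next, from $\partial X^\mu/\partial \theta_i = \partial X/\partial \theta_i + \mu(\partial \tilde u/\partial \theta_i\, \tilde\nu + \tilde u\, \partial \tilde\nu/\partial \theta_i)$, differentiation of $\nu^\mu \cdot \partial X^\mu/\partial \theta_i = 0$ at $\mu = 0$ yields
\[
\dot{\partial}\nu^\mu \cdot \frac{\partial X}{\partial \theta_i} + \nu \cdot \left( \frac{\partial \tilde u}{\partial \theta_i} \nu + \tilde u \frac{\partial \tilde\nu}{\partial \theta_i} \right) = 0.
\]
Using $|\nu|^2 = 1$ (which gives $\nu \cdot \partial\tilde\nu/\partial\theta_i = 0$) the last bracket collapses to $\partial \tilde u/\partial \theta_i$, so $c^j g_{ji} = -\partial \tilde u/\partial \theta_i$.

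Inverting the first fundamental form gives $c^j = -g^{ij} \partial \tilde u/\partial \theta_i$, hence
\[
\dot{\partial}\nu^\mu = -g^{ij} \frac{\partial \tilde u}{\partial \theta_i} \frac{\partial X}{\partial \theta_j},
\]
which is precisely the coordinate expression for $-\nabla_\Gamma u$ in the local parametrisation $X(\theta)$ (as recalled in Section~\ref{Section_notation}). This yields the claimed formula.

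I do not anticipate a major obstacle; the only subtlety is bookkeeping the distinction between the material derivative taken along $X^\mu$ (for which $\tilde u$ and $\tilde \nu$ have vanishing material derivatives by their definition through pullback to $\Omega$) and the partial derivative in $\mu$. The argument also implicitly uses that for sufficiently small $\mu$ the map $X^\mu$ remains a regular parametrisation, so that $\nu^\mu$ is smoothly defined, which follows from the $C^1$ regularity of $u$ and the smoothness of $\Gamma$.
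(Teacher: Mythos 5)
Your proof is correct and follows essentially the same route as the paper: both arguments use $|\nu^\mu|^2=1$ to show the material derivative is tangential, then differentiate the orthogonality relation $\nu^\mu\cdot X^\mu_{\theta_i}=0$ (the paper writes this as the product-rule identity $\dot\partial\nu^\mu\cdot X_{\theta_j}=-\dot\partial X^\mu_{\theta_j}\cdot\tilde\nu$) to identify the tangential components and invert the first fundamental form. No substantive differences.
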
  
\begin{proof}
We have
\[
0 = \dot{\partial} \left(|\nu^\mu|^2 \right) = 2\nu \cdot \dot{\partial}(\nu^\mu)  
\]
thus $\dot{\partial}(\nu^\mu) \in \nu^\bot$ so let $\dot{\partial}(\nu^\mu) \circ X = \sum_{i=1}^n \alpha_i X_{\theta_i}$. It then follows
\[
\sum_{i=1}^n \alpha_i g_{ij} = \left(\dot{\partial}(\nu^\mu)\circ X\right)\cdot X_{\theta_j} = -\left( \dot{\partial}(X^\mu_{\theta_j}) \right) \cdot \tilde{\nu} = -\tilde{\nu} \cdot (\tilde{u}\tilde{\nu})_{\theta_j} = -\tilde{u}_{\theta_j}.
\]
We may then conclude
\[
\dot{\partial}(\nu^\mu) \circ X = \sum_{i=1}^n \alpha_i X_{\theta_i} = \sum_{i,j,k=1}^n \alpha_i g_{ik} g^{jk} X_{\theta_j} = -\sum_{j,k=1}^n  g^{jk} \tilde{u}_{\theta_k} X_{\theta_j} = -\nablaG u \circ X.  
\]
\end{proof}
Now we will calculate the material derivative for the entries of the inverse of the first fundamental form.

\begin{lemma} Denote the entries of the inverse of the first fundamental form $G^\mu$ by $g^{\mu ij}$.
\[
\dot{\partial} g^{\mu ij} = - \sum_{k,l=1}^n \tilde{u}g^{il}g^{kj}(X_{\theta_k}\cdot \tilde{\nu}_{\theta_l} + X_{\theta_l}\cdot \tilde{\nu}_{\theta_k}) 
\]
\end{lemma}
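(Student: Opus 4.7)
The plan is to start from the defining identity $\sum_k g^\mu_{ik}\, g^{\mu kj} = \delta_{ij}$ for the inverse first fundamental form. Applying $\dot{\partial}$ to both sides and using the Leibniz rule for the material derivative gives
\begin{equation*}
\sum_k \bigl(\dot{\partial} g^\mu_{ik}\bigr) g^{\mu kj} + \sum_k g^\mu_{ik} \bigl(\dot{\partial} g^{\mu kj}\bigr) = 0,
\end{equation*}
so that, multiplying by $g^{\mu li}$ and summing over $i$ (or equivalently using the standard matrix identity $(A^{-1})' = -A^{-1} A' A^{-1}$), one obtains the clean expression
\begin{equation*}
\dot{\partial} g^{\mu ij} = -\sum_{k,l} g^{\mu ik}\, g^{\mu jl}\, \dot{\partial} g^\mu_{kl}.
\end{equation*}
This reduces the problem to computing $\dot{\partial} g^\mu_{kl}$, which is the tangential (covariant) analogue that is directly accessible from the definition $g^\mu_{kl} = X^\mu_{\theta_k}\cdot X^\mu_{\theta_l}$.

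Next, I would differentiate the parametrisation $X^\mu(\theta) = X(\theta) + \mu\,\tilde u(\theta)\,\tilde\nu(\theta)$ to get $X^\mu_{\theta_k} = X_{\theta_k} + \mu(\tilde u \tilde\nu)_{\theta_k}$, so the material derivative of the tangent vectors is $\dot{\partial} X^\mu_{\theta_k} = (\tilde u \tilde\nu)_{\theta_k} = \tilde u_{\theta_k}\tilde\nu + \tilde u\,\tilde\nu_{\theta_k}$. Evaluating at $\mu=0$ and using orthogonality $\tilde\nu\cdot X_{\theta_l} = 0$, the first term drops out when dotted with a tangent vector, leaving
\begin{equation*}
\dot{\partial} g^\mu_{kl}\bigl|_{\mu=0} = \dot{\partial} X^\mu_{\theta_k}\cdot X_{\theta_l} + X_{\theta_k}\cdot \dot{\partial} X^\mu_{\theta_l} = \tilde u\bigl(\tilde\nu_{\theta_k}\cdot X_{\theta_l} + X_{\theta_k}\cdot \tilde\nu_{\theta_l}\bigr).
\end{equation*}

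Finally I would substitute this back into the inverse-derivative formula, using symmetry $g^{\mu jl}=g^{\mu lj}$ of the metric to rearrange the dummy indices into the form stated in the lemma. There is no substantive obstacle here: everything is a direct chain-rule computation once the matrix-inverse derivative identity is invoked. The only mild subtlety is being careful that the formula is stated at $\mu=0$ (the $\tilde u,\tilde \nu$ and $g^{ij}$ on the right-hand side are evaluated on $\Gamma$), which is why the contribution of the purely normal piece of $\dot{\partial} X^\mu_{\theta_k}$ disappears. This mirrors exactly the strategy used in the preceding lemma for $\dot{\partial}\nu^\mu$ and fits into the same calculational framework needed to derive the second variation of the Willmore functional in the appendix.
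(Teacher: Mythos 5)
Your argument is correct and runs along essentially the same lines as the paper: both reduce the problem to the matrix-inverse derivative rule combined with the direct computation $\dot{\partial} g^\mu_{kl} = \tilde u(\tilde\nu_{\theta_k}\cdot X_{\theta_l} + X_{\theta_k}\cdot\tilde\nu_{\theta_l})$, which follows exactly as you describe since the normal part of $\dot{\partial} X^\mu_{\theta_k}$ is annihilated on dotting with a tangent vector. The only cosmetic difference is that you use the standard identity $(G^{-1})' = -G^{-1}G'G^{-1}$ via $\sum_k g^\mu_{ik}g^{\mu kj}=\delta_{ij}$, whereas the paper differentiates the equivalent three-factor identity $g^{\mu ij}=\sum_{k,l}g^{\mu kj}g^{\mu li}g^\mu_{lk}$ and solves the resulting relation $\dot{\partial}g^{\mu ij} = 2\dot{\partial}g^{\mu ij} + (\text{third term})$ for $\dot{\partial}g^{\mu ij}$.
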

\begin{proof}
\begin{align*}
\dot{\partial} g^{\mu ij} &= \dot{\partial}\left( \sum_{k,l=1}^n g^{\mu kj}g^{\mu li}g^\mu_{lk} \right) \\
&= \sum_{k=1}^n \left( \dot{\partial} g^{\mu kj} \right) \delta_{k}^i + \sum_{l=1}^n\left( \dot{\partial} g^{\mu li} \right) \delta_{l}^j + \sum_{k,l=1}^ng^{kj}g^{li} \left( \dot{\partial} g^\mu_{lk} \right)  \\
&= 2\dot{\partial} g^{\mu ij} + \sum_{k,l=1}^n \tilde{u} g^{kj}g^{li}(X_{\theta_k}\cdot \tilde{\nu}_{\theta_l} + X_{\theta_l}\cdot \tilde{\nu}_{\theta_k})    
\end{align*}
\end{proof}

Next, we will derive the material derivative of the tangential gradient.
\begin{lemma} \label{materialderiv_deriv}
Suppose $\Gamma,\Gamma_\mu$ are as in Lemma \ref{materialderiv_normal} and let $f^\mu :\Gamma_\mu \rightarrow \mathbb{R}$ then
\[
\dot{\partial}\left( {\nabla_{\Gamma^\mu}} f^\mu \right) = -u\mathcal{H}\nablaG f + (\nablaG f \cdot \nablaG u)\nu + \nablaG\left( \dot{\partial} f^\mu \right).
\] 
\end{lemma}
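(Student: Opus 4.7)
I will prove this by working in the local coordinates $X^\mu(\theta) := X(\theta) + \mu \tilde u(\theta)\tilde\nu(\theta)$, using the coordinate formula
\[
\nabla_{\Gamma^\mu} f^\mu \circ X^\mu = \sum_{i,j=1}^n g^{\mu ij}\,(f^\mu \circ X^\mu)_{\theta_i}\, X^\mu_{\theta_j},
\]
and differentiating in $\mu$ at $\mu = 0$. The definition of the material derivative gives $\dot\partial(\nabla_{\Gamma^\mu} f^\mu) \circ X = \partial_\mu\big|_0$ of the right hand side, which splits into three natural pieces:
\[
T_1 = \sum_{i,j} (\dot\partial g^{\mu ij})\,\tilde f_{\theta_i}\,X_{\theta_j},\quad
T_2 = \sum_{i,j} g^{ij}\,(\partial_\mu \widetilde{f^\mu})|_0{}_{,\theta_i}\,X_{\theta_j},\quad
T_3 = \sum_{i,j} g^{ij}\,\tilde f_{\theta_i}\,(\tilde u\tilde\nu)_{\theta_j}.
\]

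The second term $T_2$ is immediate: interchanging $\partial_\mu|_0$ and $\partial_{\theta_i}$ and using the very definition of $\dot\partial f^\mu$ gives $T_2 = \nabla_\Gamma(\dot\partial f^\mu)\circ X$. The third term $T_3$ expands as
\[
T_3 = \Big(\sum_{i,j} g^{ij}\tilde f_{\theta_i}\tilde u_{\theta_j}\Big)\tilde\nu + \tilde u \sum_{i,j} g^{ij}\tilde f_{\theta_i}\tilde\nu_{\theta_j}.
\]
The first bracket is exactly $\nabla_\Gamma f\cdot\nabla_\Gamma u$ (from $g^{ij}g_{jl}=\delta^i_l$), and for the second I use the identity $\tilde\nu_{\theta_j} = \mathcal{H} X_{\theta_j}$ (which follows from the definition $\mathcal{H}=\nabla_\Gamma\nu$ applied to tangent vectors of the form $X_{\theta_j}$), to obtain $\tilde u\,\mathcal{H}\nabla_\Gamma f$. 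Thus $T_3 = (\nabla_\Gamma f\cdot\nabla_\Gamma u)\nu + u\,\mathcal{H}\nabla_\Gamma f$.

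The main bookkeeping is $T_1$. Substituting the formula from the preceding lemma and using $X_{\theta_k}\cdot\tilde\nu_{\theta_l} = X_{\theta_l}\cdot\tilde\nu_{\theta_k}$ (symmetry of the second fundamental form, which is the symmetry of $\mathcal{H}$ restricted to tangent vectors) gives
\[
T_1 = -2\tilde u \sum_{i,j,k,l} g^{il}g^{kj}\,(X_{\theta_k}\cdot\tilde\nu_{\theta_l})\,\tilde f_{\theta_i}\,X_{\theta_j}.
\]
Writing $\mathcal{H}\nabla_\Gamma f$ in the basis $\{X_{\theta_j}\}$ by extracting its coefficients through inner products with $X_{\theta_k}$ and raising indices with $g^{jk}$, one obtains
\[
\mathcal{H}\nabla_\Gamma f \circ X = \sum_{i,j,k,l} g^{jk}g^{il}\,\tilde f_{\theta_i}\,h_{lk}\,X_{\theta_j},\qquad h_{lk}:=X_{\theta_l}\cdot\tilde\nu_{\theta_k},
\]
which matches the above expression up to the factor $-2\tilde u$. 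Hence $T_1 = -2u\,\mathcal{H}\nabla_\Gamma f$.

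Adding $T_1+T_2+T_3$ yields
\[
\dot\partial(\nabla_{\Gamma^\mu} f^\mu) = -u\mathcal{H}\nabla_\Gamma f + (\nabla_\Gamma f\cdot\nabla_\Gamma u)\nu + \nabla_\Gamma(\dot\partial f^\mu),
\]
as required. The one step where care is genuinely required is the tensor contraction for $T_1$: without the symmetry $h_{lk}=h_{kl}$ and the identification $\tilde\nu_{\theta_j}=\mathcal{H} X_{\theta_j}$, the four-index sum does not obviously collapse to $\mathcal{H}\nabla_\Gamma f$. Everything else is a routine product-rule differentiation combined with the formulas already established in the two preceding lemmas.
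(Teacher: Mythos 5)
Your proposal is correct and follows essentially the same route as the paper's own proof: a coordinate computation with the product rule applied to $\sum_{i,j} g^{\mu ij}\tilde f^\mu_{\theta_i} X^\mu_{\theta_j}$, splitting into the same three pieces and substituting the preceding lemma's formula for $\dot\partial g^{\mu ij}$. The only (cosmetic) difference is in simplifying the $\dot\partial g^{\mu ij}$ piece: you invoke the symmetry $X_{\theta_k}\cdot\tilde\nu_{\theta_l}=X_{\theta_l}\cdot\tilde\nu_{\theta_k}$ at the outset and identify the contraction directly with $\mathcal{H}\nabla_\Gamma f$ via $\tilde\nu_{\theta_j}=\mathcal{H} X_{\theta_j}$, while the paper contracts the two symmetric halves separately, writing one via $\nabla_\Gamma X_\gamma$; both collapse to $-2u\,\mathcal{H}\nabla_\Gamma f$.
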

\begin{proof}
\begin{align*}
&\dot{\partial}\left( \nabla_{\Gamma^\mu} f^\mu \right) \circ X = \dot{\partial}\left( \sum_{i,j=1}^n g^{\mu ij} \tilde{f}^\mu_{\theta_i} X^\mu_{\theta_j} \right) \\
&= -\sum_{i,j,k,l=1}^n \tilde{u}g^{kj}g^{li}(X_{\theta_k}\cdot \tilde{\nu}_{\theta_l} + X_{\theta_l}\cdot \tilde{\nu}_{\theta_k})\tilde{f}_{\theta_i}X_{\theta_j} 
+ \sum_{i,j=1}^n \left( g^{ij}  \tilde{f}_{\theta_i}\dot{\partial} X^\mu_{\theta_j} + g^{ij}X_{\theta_j}\dot{\partial}\left( \tilde{f}^\mu \right)_{\theta_i} \right) \\
&= -\sum_{i,l=1}^n \sum_{\gamma=1}^{n+1} \tilde{u}g^{il}\tilde{f}_{\theta_i} \tilde{\nu}_{\gamma_{\theta_l}} \nablaG X_\gamma  - \tilde{u} (\mathcal{H}\nablaG f) \circ X + \sum_{i,j=1}^n g^{ij}\tilde{f}_{\theta_i} \left(\tilde{u}_{\theta_j}\tilde{\nu} + \tilde{u}\tilde{\nu}_{\theta_j} \right) + \nablaG \left(\dot{\partial} f^\mu\right) \circ X \\
&= \left( - u \mathcal{H}\nablaG f + (\nablaG f \cdot \nablaG u)\nu + \nablaG \left(\dot{\partial} f^\mu\right) \right) \circ X       
\end{align*}
\end{proof}

These three lemmas combined produce the following calculations.
\begin{corollary}
\label{mean_curvature_1var}
The mean curvature $H^\mu = \nabla_{\Gamma^\mu} \cdot \nu^\mu$ satisfies
\[
\dot{\partial} H^\mu = -\DeltaG u -|\mathcal{H}|^2 u.  
\]
The extended Weingarten map $\mathcal{H}^\mu=\nabla_{\Gamma^\mu} \nu^\mu$ satisfies
\[
\dot{\partial} \left( |\mathcal{H}^\mu|^2 \right) = -2uTr\left(\mathcal{H}^3\right) -2 \mathcal{H}:\nablaG \nablaG u.
\]
\end{corollary}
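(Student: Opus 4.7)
The plan is to derive both identities by applying Lemma \ref{materialderiv_deriv} componentwise to the unit normal and then contracting, repeatedly using the two facts that $\mathcal{H}$ is symmetric with $\mathcal{H}\nu=0$ and that $\dot\partial \nu^\mu = -\nablaG u$ (Lemma \ref{materialderiv_normal}).

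For the first formula, I would write $H^\mu = \nabla_{\Gamma^\mu}\cdot\nu^\mu = \sum_\alpha \underline{D}^\mu_\alpha \nu^\mu_\alpha$ and take the material derivative termwise. Applying Lemma \ref{materialderiv_deriv} with $f^\mu = \nu^\mu_\alpha$ and picking the $\alpha$-component, one gets
\[
\dot\partial\bigl(\underline{D}^\mu_\alpha \nu^\mu_\alpha\bigr) = -u(\mathcal{H}\nablaG\nu_\alpha)_\alpha + (\nablaG\nu_\alpha\cdot\nablaG u)\nu_\alpha + \underline{D}_\alpha(\dot\partial\nu^\mu_\alpha).
\]
Summing over $\alpha$, the first sum equals $-u\,\mathcal{H}_{\alpha\beta}\mathcal{H}_{\beta\alpha}=-u|\mathcal{H}|^2$ by symmetry of $\mathcal{H}$; the second rewrites as $(\mathcal{H}\nablaG u)\cdot\nu = 0$ using $\mathcal{H}\nu=0$; and the third, by Lemma \ref{materialderiv_normal}, is $\nablaG\cdot(-\nablaG u) = -\DeltaG u$. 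Adding these yields $\dot\partial H^\mu = -\DeltaG u - |\mathcal{H}|^2 u$.

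For the second formula I would use $\dot\partial(|\mathcal{H}^\mu|^2) = 2\,\mathcal{H}\colon\dot\partial\mathcal{H}^\mu$. Applying Lemma \ref{materialderiv_deriv} with $f^\mu = \nu^\mu_\alpha$ and taking the $\beta$-component gives, after invoking Lemma \ref{materialderiv_normal},
\[
\dot\partial\mathcal{H}^\mu_{\alpha\beta} = -u(\mathcal{H}^2)_{\alpha\beta} + (\mathcal{H}\nablaG u)_\alpha\nu_\beta - \underline{D}_\beta\underline{D}_\alpha u.
\]
Contracting with $\mathcal{H}_{\alpha\beta}$: the first term becomes $-u\,\mathrm{Tr}(\mathcal{H}^3)$; the second vanishes because $\mathcal{H}_{\alpha\beta}\nu_\beta=0$; and the third gives $-\mathcal{H}\colon\nablaG\nablaG u$. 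Multiplying by $2$ yields the claim.

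The only subtlety, rather than a genuine obstacle, is in the last step: $\nablaG\nablaG u$ is a priori not symmetric, so one should note that the commutator rule (\ref{commutator_rule}) involves terms proportional to $\nu_\alpha$ or $\nu_\beta$, which are annihilated upon contraction with $\mathcal{H}$ since $\mathcal{H}\nu=0$. This justifies writing $\mathcal{H}\colon\nablaG\nablaG u$ unambiguously. The rest of the proof is purely index bookkeeping using the three preceding lemmas.
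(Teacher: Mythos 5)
Your proposal is correct and follows exactly the route the paper intends: the paper's ``proof'' is just the remark ``These three lemmas combined produce the following calculations,'' and you have supplied precisely the componentwise application of Lemmas \ref{materialderiv_normal} and \ref{materialderiv_deriv} that this remark presupposes. Your closing observation about the antisymmetric part of $\nablaG\nablaG u$ being killed by contraction with $\mathcal{H}$ (via the commutator rule \eqref{commutator_rule} and $\mathcal{H}\nu=0$) is the right way to dispose of the only potential ambiguity.
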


We may now calculate the final material derivative required for the second variation.

\begin{lemma} For the material derivative of $\Delta_{\Gamma^\mu} f^\mu$ we have 
\begin{align*}
\dot{\partial} \left( \Delta_{\Gamma^\mu} f^\mu \right) &= -2u\mathcal{H}:\nablaG \nablaG f -2\mathcal{H}\nablaG f\cdot \nablaG u - u\nablaG f \cdot \nablaG H \\
& \quad + H \nablaG f \cdot \nablaG u + \DeltaG \left(\dot{\partial} f^\mu \right) 
\end{align*}

\end{lemma}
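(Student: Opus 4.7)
The strategy is to write $\Delta_{\Gamma^\mu} f^\mu = \nabla_{\Gamma^\mu}\cdot V^\mu$ with $V^\mu := \nabla_{\Gamma^\mu} f^\mu$ and reduce the problem to computing the material derivative of a tangential divergence. Expanding componentwise as $\sum_\alpha \underline{D}_\alpha^\mu V_\alpha^\mu$, I can apply Lemma \ref{materialderiv_deriv} to each scalar function $V_\alpha^\mu$ and take the $\alpha$-th component of the resulting vector identity. Summing in $\alpha$ produces three contributions: one from the curvature piece $-u\mathcal{H}\nablaG V_\alpha$, one from the normal piece $(\nablaG V_\alpha\cdot\nablaG u)\nu$, and one involving the inner material derivative $\nablaG(\dot{\partial}V_\alpha^\mu)$, where $\dot{\partial} V^\mu$ is itself supplied by a second invocation of Lemma \ref{materialderiv_deriv}.

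The three contributions then simplify along familiar lines. The first collapses immediately to $-u\,\mathcal{H}:\nablaG\nablaG f$, since $V_\alpha = \underline{D}_\alpha f$. For the second, I would differentiate the identity $\nablaG f\cdot\nu=0$ tangentially to obtain $\sum_\alpha \underline{D}_\beta\underline{D}_\alpha f\;\nu_\alpha = -(\mathcal{H}\nablaG f)_\beta$, which converts the term into $-\mathcal{H}\nablaG f\cdot\nablaG u$. For the third, expanding $\underline{D}_\alpha(\dot{\partial}V_\alpha^\mu)$ via the product rule yields a second copy of $-\mathcal{H}\nablaG f\cdot\nablaG u$, the term $H\nablaG f\cdot\nablaG u$ (using $\nablaG\cdot\nu=H$ together with the tangentiality of $\nablaG(\nablaG f\cdot\nablaG u)$), the expected $\DeltaG(\dot{\partial}f^\mu)$, and a residual piece $-u\,\nablaG\cdot(\mathcal{H}\nablaG f)$.

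The main obstacle is handling this last residual. I would expand it as $-u[\DeltaG\nu\cdot\nablaG f + \mathcal{H}:\nablaG\nablaG f]$, which accounts for the second copy of $-u\mathcal{H}:\nablaG\nablaG f$ needed in the statement and leaves the task of identifying $\DeltaG\nu\cdot\nablaG f$ with $\nablaG H\cdot\nablaG f$. The crucial identity $\DeltaG\nu = \nablaG H - |\mathcal{H}|^2\nu$ follows from $\DeltaG\nu_\alpha = \sum_\beta\underline{D}_\beta\mathcal{H}_{\beta\alpha}$ by deriving the Codazzi-type relation
\[
\underline{D}_\alpha\mathcal{H}_{\beta\gamma}-\underline{D}_\beta\mathcal{H}_{\alpha\gamma} = (\mathcal{H}^2)_{\beta\gamma}\nu_\alpha-(\mathcal{H}^2)_{\alpha\gamma}\nu_\beta,
\]
which itself is obtained by applying the commutator rule \eqref{commutator_rule} to the scalar function $\nu_\gamma$, contracting $\gamma=\beta$, and using $\mathcal{H}\nu=0$. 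Taking the inner product with the tangential vector $\nablaG f$ annihilates the normal component and yields the $-u\nablaG f\cdot\nablaG H$ term. Collecting the seven pieces produced above then reduces to the five terms of the statement; note also that the symmetry of $\mathcal{H}$ together with $\mathcal{H}\nu=0$ ensures that $\mathcal{H}$ contracted with the (generally non-symmetric) Hessian $\nablaG\nablaG f$ is well-defined in the sense that $\mathcal{H}:\nablaG\nablaG f = \mathcal{H}:(\nablaG\nablaG f)^T$, so no ambiguity arises in the contracted index.
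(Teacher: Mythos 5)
Your proposal is correct and follows essentially the same route as the paper: expanding $\Delta_{\Gamma^\mu}f^\mu$ componentwise, invoking Lemma \ref{materialderiv_deriv} twice (once for the outer divergence and once for the inner gradient), and reducing everything with the identity $\sum_\alpha \underline{D}_\alpha\underline{D}_\beta\nu_\alpha = \underline{D}_\beta H - |\mathcal{H}|^2\nu_\beta$. The only genuine addition on your part is that you actually derive this last identity from the commutator rule \eqref{commutator_rule} applied to $\nu_\gamma$ (a Codazzi-type contraction), whereas the paper simply cites it at the end of its proof; your treatment of the middle term via differentiating $\nablaG f\cdot\nu=0$ is an equivalent, slightly more direct substitute for the paper's explicit commutator-rule swap.
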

\begin{proof}
Using the commutator rule in (\ref{commutator_rule}), we obtain
\begin{align*}
& \dot{\partial} \left( \Delta_{\Gamma^\mu} f^\mu \right) = \sum_{\alpha=1}^{n+1} -u\nablaG \underline{D}_\alpha f \cdot \nablaG \nu_\alpha + \sum_{\alpha,\beta=1}^{n+1} \nu_\alpha \underline{D}_\beta \underline{D}_\alpha f \underline{D}_\beta u +\sum_{\alpha=1}^{n+1} \underline{D}_\alpha \left( \dot{\partial} \underline{D}^\mu_\alpha f^\mu  \right) \\
&= -u \mathcal{H}:\nablaG \nablaG f + \sum_{\alpha,\beta=1}^{n+1}
\left(
 \nu_\alpha \underline{D}_\alpha \underline{D}_\beta f \underline{D}_\beta u 
 + \sum_{\gamma=1}^{n+1} \nu_\alpha \underline{D}_\beta u
\left(  \nu_\beta \underline{D}_\alpha \nu_\gamma \underline{D}_\gamma f  - \nu_\alpha \underline{D}_\beta \nu_\gamma \underline{D}_\gamma f 
\right) \right)\\
& \quad + \sum_{\alpha,\beta=1}^{n+1} -\underline{D}_\alpha \left( u\underline{D}_\beta f \underline{D}_\beta \nu_\alpha \right) + \underline{D}_\alpha (\nu_\alpha \underline{D}_\beta f \underline{D}_\beta u) +\underline{D}_\alpha\underline{D}_\alpha \left( \dot{\partial} f^\mu \right) \\
&=-2u\mathcal{H}:\nablaG \nablaG f  -2\mathcal{H}\nablaG f\cdot \nablaG u - u\nablaG f \cdot \nablaG H + H \nablaG f \cdot \nablaG u + \DeltaG \left( \dot{\partial} f^\mu \right), 
\end{align*}
where in the last step we have made use of the identity
$$
	\sum_{\alpha=1}^{n+1}\underline{D}_\alpha \underline{D}_\beta \nu_\alpha
	= \underline{D}_\beta H - |\mathcal{H}|^2 \nu_\beta.
$$
\end{proof}

We now use these results to calculate the second variation of the Willmore functional:
\[
W(\Gamma):= \frac{1}{2} \int_\Gamma H^2 \; do
\]
which has first variation, via the transport formula \eqref{transport_formula}, given by
\begin{equation}
\label{Willmore_functional_1var}
	W'(\Gamma)[u\nu] = \int_\Gamma H \dot{\partial} \left( H^\mu \right) +\frac{1}{2}H^3u \; do   =  \int_\Gamma -H \DeltaG u -  H|\mathcal{H}|^2u +\frac{1}{2}H^3u \;do.
\end{equation}

\begin{theorem}
\label{Theorem_2var_Willmore}
The second variation of the Willmore functional $W(\Gamma)$ is given by 
\begin{align*}
& W''(\Gamma)[u\nu, g\nu] = \int_\Gamma (\DeltaG g + |\mathcal{H}|^2g)(\DeltaG u + |\mathcal{H}|^2u) +2H \mathcal{H}:(g\nablaG \nablaG u + u\nablaG \nablaG g )  \\
& \quad +2H \mathcal{H}\nablaG u \cdot \nablaG g + Hg \nablaG u \cdot \nablaG H - H^2 \nablaG u \cdot \nablaG g -\frac{3}{2}H^2 u \DeltaG g - H^2 g \DeltaG u \\
& \quad + \left( 2HTr(\mathcal{H}^3) -\frac{5}{2}H^2|\mathcal{H}|^2 + \frac{1}{2}H^4 \right) gu \;do
\end{align*}
\end{theorem}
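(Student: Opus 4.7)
The plan is to start from the first variation identity \eqref{Willmore_functional_1var}, written on a one-parameter family $\Gamma_\mu$, and then apply the transport formula \eqref{transport_formula} once more, differentiating with respect to $\mu$ at $\mu=0$. By Remark \ref{remark:firstAndSecondVariaitions}, the quantity
\[
\varphi(\mu) := W'(\Gamma_\mu)[u^\ell \nu^\mu] = \int_{\Gamma_\mu} -H^\mu \Delta_{\Gamma_\mu} u^\ell - H^\mu |\mathcal{H}^\mu|^2 u^\ell + \tfrac{1}{2}(H^\mu)^3 u^\ell \; do_\mu
\]
(where $\Gamma_\mu := \{ p + \mu g(p)\nu(p) \mid p \in \Gamma\}$ and $u^\ell$ is the lift of $u$) has derivative at $\mu=0$ equal to $W''(\Gamma)[u\nu, g\nu]$. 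First I would note that the velocity field is $V = g\nu$, so $\nabla_{\Gamma_\mu}\cdot V = gH$, and that $\dot\partial u^\ell = 0$ by construction of the lift.

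Next I would assemble the material derivatives of the three relevant quantities. From Corollary \ref{mean_curvature_1var}, $\dot\partial H^\mu = -\DeltaG g - |\mathcal{H}|^2 g$ and $\dot\partial(|\mathcal{H}^\mu|^2) = -2g\, Tr(\mathcal{H}^3) - 2\mathcal{H}:\nablaG\nablaG g$. From the lemma computing $\dot\partial(\Delta_{\Gamma^\mu} f^\mu)$, applied to $f^\mu = u^\ell$ (so that $\dot\partial f^\mu = 0$), one obtains
\[
\dot\partial(\Delta_{\Gamma^\mu} u^\ell) = -2g\,\mathcal{H}:\nablaG\nablaG u - 2\mathcal{H}\nablaG u \cdot \nablaG g - g\,\nablaG u \cdot \nablaG H + H\,\nablaG u \cdot \nablaG g .
\]

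Then I would apply the transport formula term by term. Differentiating $\int -H^\mu \Delta_{\Gamma_\mu} u^\ell \; do_\mu$ contributes $(\DeltaG g+|\mathcal{H}|^2 g)\DeltaG u - H\,\dot\partial(\Delta_{\Gamma^\mu} u^\ell) - H^2 g\,\DeltaG u$; differentiating $\int -H^\mu|\mathcal{H}^\mu|^2 u^\ell \; do_\mu$ contributes $(\DeltaG g + |\mathcal{H}|^2 g)|\mathcal{H}|^2 u - H u\,\dot\partial(|\mathcal{H}^\mu|^2) - H^2|\mathcal{H}|^2 g u$; and differentiating $\int \tfrac{1}{2}(H^\mu)^3 u^\ell \; do_\mu$ contributes $\tfrac{3}{2}H^2(\DeltaG g + |\mathcal{H}|^2 g)(-u) + \tfrac{1}{2}H^4 g u$ (after sign flip from $\dot\partial H^\mu$). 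Summing these and collecting like terms yields precisely the stated formula; the quartic pieces in $H,|\mathcal{H}|$ combine as $|\mathcal{H}|^4 g u$ (absorbed into the factored product $(\DeltaG g + |\mathcal{H}|^2 g)(\DeltaG u+|\mathcal{H}|^2 u)$), $-\tfrac{5}{2}H^2|\mathcal{H}|^2 g u = -H^2|\mathcal{H}|^2 gu - \tfrac{3}{2}H^2|\mathcal{H}|^2 gu$, $\tfrac{1}{2}H^4 gu$ and $2H\,Tr(\mathcal{H}^3)gu$.

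The main obstacle is the identity for $\dot\partial(\Delta_{\Gamma^\mu} u^\ell)$, which requires the commutator rule \eqref{commutator_rule} and the identity $\underline{D}_\alpha \underline{D}_\beta \nu_\alpha = \underline{D}_\beta H - |\mathcal{H}|^2 \nu_\beta$; once those are in hand, the rest is bookkeeping. Note that the derivation is carried out without integration by parts, so the formula remains valid on surfaces with boundary (in agreement with the discussion in Section \ref{Section_derivation_of_approx_functional}). The assumption that $\Gamma$ is at most $C^4$ is enough to make all quantities meaningful; weaker regularity for the admissible height functions is then recovered by density.
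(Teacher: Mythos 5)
Your proposal is correct and takes essentially the same route as the paper: differentiate $W'(\Gamma_\mu)[u^\ell\nu^\mu]$ at $\mu=0$ using the transport formula with $V=g\nu$ (hence $\nabla_\Gamma\cdot V = gH$) and $\dot\partial u^\ell=0$, then substitute the material derivative formulas for $H^\mu$, $|\mathcal{H}^\mu|^2$, and $\Delta_{\Gamma^\mu}u^\ell$ from the appendix lemmas. The paper's proof records only the intermediate expansion after the transport formula and then says to substitute; your version simply spells out the bookkeeping term by term, arriving at the same result.
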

\begin{proof}
We use the transport formula again and note that $u$ is extended constantly in the normal direction, in accordance with Remark \ref{remark:firstAndSecondVariaitions}.  
\begin{align*}
W''(\Gamma)[u\nu, g\nu] &= \int_\Gamma -(\mat H^\mu)(\DeltaG u + |\mathcal{H}|^2u) - H\mat(\Delta_{\Gamma^\mu} u^\mu) - H\mat (|\mathcal{H}^\mu|^2)u + \frac{3}{2}H^2 (\mat H^\mu)u \\
& \qquad -H^2g\DeltaG u -H^2|\mathcal{H}|^2gu + \frac{1}{2} H^4 gu \; do
\end{align*}
Using the results of the above lemmas to calculate the required material derivatives produces the result.
\end{proof}
\begin{remark}
\label{Remark_integration_by_parts_for_Willmore}
Using integration by parts on closed surfaces for the term
\begin{align*}
	\int_\Gamma H g \nablaG u \cdot \nablaG H \;do &= \int_\Gamma \nablaG u \cdot \nablaG \left( \frac{1}{2}H^2g \right) -\frac{1}{2}H^2 \nablaG u \cdot \nablaG g \;do\\
	&= \int_\Gamma -\frac{1}{2}H^2g\DeltaG u - \frac{1}{2}H^2 \nablaG u \cdot \nablaG g \;do
\end{align*}
leads to
\begin{align*}
& W''(\Gamma)[u\nu, g\nu] = \int_\Gamma (\DeltaG g + |\mathcal{H}|^2g)(\DeltaG u + |\mathcal{H}|^2u) +2H \mathcal{H}:(g\nablaG \nablaG u + u\nablaG \nablaG g )  \\
& \quad +2H \mathcal{H}\nablaG u \cdot \nablaG g - \frac{3}{2} H^2 \nablaG u \cdot \nablaG g -\frac{3}{2}H^2 ( u \DeltaG g + g \DeltaG u) \\
& \quad + \left( 2HTr(\mathcal{H}^3) -\frac{5}{2}H^2|\mathcal{H}|^2 + \frac{1}{2}H^4 \right) gu \;do
\end{align*}
which unveils the symmetry of the second variation.
\end{remark}

\begin{corollary}
\label{Corollary_2var_Willmore_on_sphere}
If $\Gamma = S(0,R)$, an $n-$sphere, then the second variation of the Willmore functional $W(\Gamma)$ is given by
\begin{align*}
& W''(\Gamma)[u\nu, g\nu] = \int_\Gamma \DeltaG g \DeltaG u + \frac{n}{R^2}\left( \frac{3n}{2} - 4 \right) \nablaG g \cdot \nablaG u +\frac{n^2}{R^4}\left( \frac{n^2}{2} - \frac{5n}{2} +3  \right)gu \; do.
\end{align*}
\end{corollary}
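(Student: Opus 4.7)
The plan is simply to specialise the general second variation formula of Theorem \ref{Theorem_2var_Willmore} to the spherical geometry and then use integration by parts to eliminate the mixed second order terms in $u\DeltaG g + g\DeltaG u$.

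First I record the spherical data for $\Gamma = S(0,R) \subset \mathbb{R}^{n+1}$. Since $\nu = id_\Gamma / R$, we have $\mathcal{H} = \nablaG \nu = P/R$, where $P = \unit - \nu \otimes \nu$ is the tangential projection. Consequently $H = n/R$ (so $\nablaG H = 0$), $|\mathcal{H}|^2 = n/R^2$, and $\mbox{Tr}(\mathcal{H}^3) = n/R^3$. For any sufficiently smooth $u$ on $\Gamma$, using that $\nablaG u$ is tangential and that $\nu \cdot \nablaG(\underline{D}_\beta u) = 0$ (since $\nablaG$ of any scalar is tangent), one obtains
\[
\mathcal{H} \nablaG u = \tfrac{1}{R}\nablaG u, \qquad \mathcal{H} : \nablaG \nablaG u = \tfrac{1}{R} P_{\alpha\beta} \underline{D}_\alpha \underline{D}_\beta u = \tfrac{1}{R}\DeltaG u.
\]

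Next I substitute these identities into \eqref{willmore2var}. The contribution $Hg\nablaG u \cdot \nablaG H$ drops out; the remaining geometric factors collapse to powers of $n$ and $R$, giving
\begin{align*}
W''(\Gamma)[u\nu,g\nu] &= \int_\Gamma \DeltaG g \DeltaG u + \tfrac{n}{R^2}(g\DeltaG u + u\DeltaG g) + \tfrac{n^2}{R^4}gu + \tfrac{2n}{R^2}(g\DeltaG u + u\DeltaG g) \\
&\qquad + \tfrac{2n}{R^2}\nablaG u \cdot \nablaG g - \tfrac{n^2}{R^2}\nablaG u \cdot \nablaG g - \tfrac{3n^2}{2R^2}u\DeltaG g - \tfrac{n^2}{R^2}g\DeltaG u \\
&\qquad + \left(\tfrac{2n^2}{R^4} - \tfrac{5n^3}{2R^4} + \tfrac{n^4}{2R^4}\right) gu \; do.
\end{align*}
Collecting coefficients, the combined multiplier of $g\DeltaG u + u\DeltaG g$ is $(3n - \tfrac{3}{2}n^2)/R^2$ (after noting the $u\DeltaG g$ and $g\DeltaG u$ terms appear with equal coefficients once one uses the symmetry of the formula or equivalently the IBP version \eqref{willmore2varIBP}).

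Finally, on the closed surface $\Gamma$, integration by parts yields $\int_\Gamma (g\DeltaG u + u\DeltaG g)\, do = -2\int_\Gamma \nablaG u \cdot \nablaG g \; do$. Substituting this, the net coefficient of $\nablaG u \cdot \nablaG g$ becomes
\[
\tfrac{2n}{R^2} - \tfrac{n^2}{R^2} - 2 \cdot \tfrac{1}{R^2}(3n - \tfrac{3}{2}n^2) = \tfrac{n}{R^2}\bigl(\tfrac{3n}{2} - 4\bigr),
\]
while the coefficient of $gu$ aggregates to $(n^2/R^4)(n^2/2 - 5n/2 + 3)$. This yields the claimed formula. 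The only real bookkeeping obstacle is tracking all coefficients through the two simplifications; the geometric identities themselves are essentially immediate on a round sphere.
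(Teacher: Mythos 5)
Your overall strategy -- substitute the spherical data $\mathcal{H} = P/R$, $H = n/R$, $|\mathcal{H}|^2 = n/R^2$, $\mathrm{Tr}(\mathcal{H}^3) = n/R^3$, $\nablaG H = 0$, $\mathcal{H}\nablaG u = \nablaG u/R$, $\mathcal{H}:\nablaG\nablaG u = \DeltaG u/R$ into the general second variation formula and then integrate by parts -- is exactly the paper's proof, and your geometric identities and the displayed substitution are correct. However, the coefficient bookkeeping that follows contains a genuine error, and the final arithmetic identity you assert is actually false.

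You start from \eqref{willmore2var}, whose relevant terms are $-\tfrac{3}{2}H^2\, u\DeltaG g - H^2\, g\DeltaG u - H^2\,\nablaG u\cdot\nablaG g$, and you record exactly these asymmetric coefficients $-\tfrac{3n^2}{2R^2}u\DeltaG g$, $-\tfrac{n^2}{R^2}g\DeltaG u$ and $-\tfrac{n^2}{R^2}\nablaG u\cdot\nablaG g$ in your display. But then you artificially symmetrise, asserting the combined coefficient of $g\DeltaG u + u\DeltaG g$ to be $(3n-\tfrac{3}{2}n^2)/R^2$ per term while still using $\tfrac{2n}{R^2}-\tfrac{n^2}{R^2}$ for $\nablaG u\cdot\nablaG g$; this mixes the two formulations of the second variation inconsistently. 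Indeed, $\tfrac{2n}{R^2}-\tfrac{n^2}{R^2}-\tfrac{2}{R^2}\bigl(3n-\tfrac{3}{2}n^2\bigr)=\tfrac{2n^2-4n}{R^2}$, which is not $\tfrac{n}{R^2}\bigl(\tfrac{3n}{2}-4\bigr)=\tfrac{(3n^2/2)-4n}{R^2}$; the equality you write is wrong. The fix is to keep the accounting coherent: from \eqref{willmore2var}, the $g\DeltaG u$ coefficient is $\tfrac{3n-n^2}{R^2}$, the $u\DeltaG g$ coefficient is $\tfrac{3n-\frac{3}{2}n^2}{R^2}$, and after integrating by parts (each contributing $-\int\nablaG u\cdot\nablaG g$) the net gradient coefficient is $\tfrac{1}{R^2}\bigl[(2n-n^2)-(3n-n^2)-(3n-\tfrac{3}{2}n^2)\bigr]=\tfrac{n}{R^2}\bigl(\tfrac{3n}{2}-4\bigr)$, as claimed. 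Equivalently, start from the symmetrised \eqref{willmore2varIBP}, where the coefficient of $\nablaG u\cdot\nablaG g$ is $\tfrac{2n}{R^2}-\tfrac{3n^2}{2R^2}$ (not $\tfrac{2n}{R^2}-\tfrac{n^2}{R^2}$), and both $g\DeltaG u$ and $u\DeltaG g$ carry coefficient $(3n-\tfrac{3}{2}n^2)/R^2$. Either version, applied consistently, recovers the corollary; your current hybrid does not, and the discrepancy is of size $\tfrac{n^2}{2R^2}$. The $gu$ coefficient you report is correct.
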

\begin{proof}
Inserting $\mathcal{H} = \frac{1}{R} P$ and $H = \frac{n}{R}$ into the second variation
and using integration by parts produces the result.
\end{proof}

\subsection*{Derivation of the second variation of the area and volume functionals}
We also require the first and second variation of the area and volume functionals, which are
$$
	A(\Gamma) := \int_\Gamma 1 \; do \quad \text{ and } \quad V(\Gamma) = \frac{1}{n+1}\int_\Gamma id_\Gamma \cdot \nu \;do.
$$
\begin{corollary}
\label{Corollary_2var_area_functional}
The first and second variation of the area functional $A(\Gamma)$ are given by 
\begin{align*}
	&	A'(\Gamma)[u\nu] = \int_\Gamma u H \; do, \\
	&	A''(\Gamma)[u\nu, g\nu] = \int_\Gamma u g H^2  - u (\Delta_{\Gamma} g + |\mathcal{H}|^2 g) \; do.
\end{align*}
\end{corollary}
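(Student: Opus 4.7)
The plan is to derive both variations by direct application of the transport formula \eqref{transport_formula}, reusing the material-derivative computations that have already been assembled for the Willmore functional.

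First I would compute $A'(\Gamma)[u\nu]$. For the area functional the integrand is the constant function $1$, so its material derivative is zero and \eqref{transport_formula} reduces to
\[
A'(\Gamma)[u\nu] = \int_\Gamma \nabla_\Gamma \cdot (u\nu) \;do.
\]
Since $\nabla_\Gamma u$ is tangential while $\nu$ is normal, $\nabla_\Gamma u \cdot \nu = 0$, leaving $\nabla_\Gamma \cdot (u\nu) = u\, \nabla_\Gamma \cdot \nu = uH$. This yields the claimed formula.

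For the second variation I would follow exactly the strategy used in the proof of Theorem \ref{Theorem_2var_Willmore}: by Remark \ref{remark:firstAndSecondVariaitions}, it suffices to differentiate the first variation evaluated on the perturbed surface $\Gamma_\mu$ with $u$ replaced by its constant normal extension $u^\ell$ and $\nu$ replaced by $\nu^\mu$. That is, I would compute
\[
A''(\Gamma)[u\nu, g\nu] = \frac{d}{d\mu}\bigg|_{\mu=0} \int_{\Gamma_\mu} u^\ell H^\mu \;do_\mu,
\]
where $\Gamma_\mu$ is the variation in the direction $g\nu$. Applying \eqref{transport_formula} with the velocity field $V = g\nu$ and integrand $u^\ell H^\mu$ gives
\[
A''(\Gamma)[u\nu, g\nu] = \int_\Gamma \bigl(\dot{\partial} u^\ell\bigr) H + u \bigl(\dot{\partial} H^\mu\bigr) + u H \,\nabla_\Gamma \cdot (g\nu) \;do.
\]
Now I would substitute the three ingredients already available: $\dot{\partial} u^\ell = 0$ by definition of the constant normal extension; $\dot{\partial} H^\mu = -\Delta_\Gamma g - |\mathcal{H}|^2 g$ from Corollary \ref{mean_curvature_1var}; and $\nabla_\Gamma \cdot (g\nu) = gH$ by the same argument used for the first variation. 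Collecting the three terms immediately yields
\[
A''(\Gamma)[u\nu, g\nu] = \int_\Gamma u g H^2 - u\bigl(\Delta_\Gamma g + |\mathcal{H}|^2 g\bigr) \;do,
\]
as claimed.

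There is no real obstacle here: all machinery is in place and the computation is shorter than the Willmore case because the integrand $1$ carries no curvature dependence. The only point requiring a line of justification is the substitution $\dot{\partial} H^\mu = -\Delta_\Gamma g - |\mathcal{H}|^2 g$, which is already established (with the role of $u$ in Corollary \ref{mean_curvature_1var} now played by $g$, since we are varying in the direction $g\nu$).
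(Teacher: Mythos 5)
Your proposal is correct and follows the same route the paper takes: apply the transport formula \eqref{transport_formula} twice, using $\dot{\partial}u^\ell = 0$ for the lifted function and $\dot{\partial}H^\mu = -\Delta_\Gamma g - |\mathcal{H}|^2 g$ from Corollary \ref{mean_curvature_1var}. You have simply written out the details that the paper's one-line proof leaves implicit.
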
	
\begin{proof}
The transport formula \eqref{transport_formula} directly gives the first variation. The second variation is then obtained from Corollary \ref{mean_curvature_1var}.
\end{proof}
Using integration by parts, we obtain
$$
	A''(\Gamma)[u\nu, g\nu] = \int_\Gamma  \nablaG u \cdot \nablaG g + (H^2 - |\mathcal{H}|^2) u g  \; do.
$$

\begin{corollary}
\label{Corollary_2var_volume_functional}
The first and second variation of the volume functional $V(\Gamma)$ are given by
\begin{align*}
	&	V'(\Gamma)[u\nu] = \frac{1}{n+1}\int_\Gamma u -id_\Gamma \cdot \nablaG u + id_\Gamma \cdot \nu u H \; do, \\
	&	V''(\Gamma)[u\nu, g\nu] = \frac{1}{n+1}\int_\Gamma g \A \nablaG u \cdot id_{\Gamma} - id_{\Gamma} \cdot \nu (\nablaG u \cdot \nablaG g + u\DeltaG g ) -H id_{\Gamma} \cdot( u\nablaG g + g \nablaG u ) \\
& \qquad +\left( 2H -  id_{\Gamma}\cdot\nu|\A|^2 +H^2 id_{\Gamma} \cdot\nu \right)gu \; do.
\end{align*}
\end{corollary}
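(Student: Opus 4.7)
The strategy is to apply the transport formula \eqref{transport_formula} twice: first to obtain $V'(\Gamma)[u\nu]$, then to differentiate that expression once more in a variation direction $g\nu$. All required material derivatives are in hand from the earlier lemmas -- we need $\dot{\partial}\nu^\mu = -\nablaG u$ (Lemma \ref{materialderiv_normal}), $\dot{\partial}(\nabla_{\Gamma^\mu} f^\mu)$ (Lemma \ref{materialderiv_deriv}), and $\dot{\partial} H^\mu = -\DeltaG u - |\A|^2 u$ (Corollary \ref{mean_curvature_1var}) -- plus the fact that by definition of the material derivative, $\dot{\partial}(id_{\Gamma_\varepsilon}) = V$, the velocity field of the deformation. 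Throughout we use that $\nablaG u$ is tangential, so $\nu \cdot \nablaG u = 0$, and consequently $\nablaG \cdot (u\nu) = u H$.

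For the first variation, the transport formula applied to $V(\Gamma_\varepsilon) = \frac{1}{n+1}\int_{\Gamma_\varepsilon} id_{\Gamma_\varepsilon} \cdot \nu^\varepsilon\,do_\varepsilon$ with velocity $V = u\nu$ gives
\[
V'(\Gamma)[u\nu] = \frac{1}{n+1}\int_\Gamma \dot{\partial}(id_{\Gamma_\varepsilon}\cdot \nu^\varepsilon) + (id_\Gamma \cdot \nu)\,u H\;do.
\]
Using $\dot{\partial}(id_{\Gamma_\varepsilon}) = u\nu$ and $\dot{\partial}\nu^\varepsilon = -\nablaG u$ we get $\dot{\partial}(id_{\Gamma_\varepsilon}\cdot\nu^\varepsilon) = u - id_\Gamma \cdot \nablaG u$, which yields the stated first variation.

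For the second variation, I will differentiate $\mu \mapsto V'(\Gamma_\mu)[u^\ell\nu^\ell]$ at $\mu=0$, where $\Gamma_\mu$ has velocity $g\nu$ and $u^\ell,\nu^\ell$ are the standard lifts (material derivative zero), as in Remark \ref{remark:firstAndSecondVariaitions}. Applying the transport formula to each of the three pieces of the first variation integrand gives a boundary term $[u - id_\Gamma\cdot\nablaG u + (id_\Gamma\cdot\nu)uH]\,gH$ plus the material derivatives of the integrand, computed as follows:
\begin{itemize}
\item $\dot{\partial} u^\ell = 0$;
\item $\dot{\partial}(id_{\Gamma_\mu} \cdot \nabla_{\Gamma_\mu} u^\ell) = g\nu \cdot \nablaG u + id_\Gamma \cdot \bigl(-g\A\nablaG u + (\nablaG u\cdot\nablaG g)\nu\bigr) = -g\,id_\Gamma\cdot\A\nablaG u + (id_\Gamma\cdot\nu)(\nablaG u\cdot\nablaG g)$, using $\nu\cdot\nablaG u = 0$;
\item $\dot{\partial}\bigl((id_{\Gamma_\mu} \cdot \nu^\mu) u^\ell H^\mu\bigr) = guH - uH\,id_\Gamma\cdot\nablaG g - u(id_\Gamma\cdot\nu)(\DeltaG g + |\A|^2 g)$, combining $\dot{\partial}(id_{\Gamma_\mu})=g\nu$, $\dot{\partial}\nu^\mu = -\nablaG g$ and $\dot{\partial} H^\mu = -\DeltaG g - |\A|^2 g$.
\end{itemize}
Collecting these contributions with the correct signs and adding the boundary term produces exactly the six groups of terms in the claimed formula (the $guH$ from the last material derivative combines with $guH$ from the boundary term to produce $2gu H$; the $gH^2 u\,id_\Gamma\cdot\nu$ and $-gu\,id_\Gamma\cdot\nu|\A|^2$ come directly from the boundary term and the $H$--material derivative respectively).

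The only real obstacle is bookkeeping: there are many cross terms involving $id_\Gamma$, $\nu$, $\A$, $\nablaG$ and two variation directions, and it is easy to miscount signs or overlook tangentiality cancellations like $\nu\cdot\nablaG u = 0$. No new geometric identity is required beyond those already proved in the appendix.
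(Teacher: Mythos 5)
Your proposal is correct and follows exactly the route the paper takes: apply the transport formula once to obtain $V'(\Gamma)[u\nu]$, then differentiate the resulting integrand in a second variation direction using $\dot{\partial}(id_{\Gamma_\mu}) = g\nu$, $\dot{\partial}\nu^\mu = -\nablaG g$, $\dot{\partial}H^\mu = -\DeltaG g - |\A|^2 g$ (Corollary \ref{mean_curvature_1var}) and Lemma \ref{materialderiv_deriv}, together with the divergence term $gH$. The term-by-term bookkeeping you outline (the $\nu\cdot\nablaG u = 0$ cancellation, the $2guH$ from combining the material derivative and divergence contributions, and the $H^2 id_\Gamma\cdot\nu\,gu$ and $-id_\Gamma\cdot\nu|\A|^2\,gu$ pieces) reproduces the stated formula.
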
	
\begin{proof}
The transport formula \eqref{transport_formula} directly gives the first variation. The second variation is then obtained from Corollary \ref{mean_curvature_1var} and Lemma \ref{materialderiv_deriv}.
\end{proof}
Using integration by parts, we obtain
\begin{align*}
& V'(\Gamma)[u\nu] = \int_\Gamma u \;do, \label{volume1var}\\
& V''(\Gamma)[u\nu,g\nu] = \int_\Gamma Hgu \;do.
\end{align*}

\newpage
\bibliography{refs}{}
\bibliographystyle{plain}

\end{document}